\documentclass[reqno]{amsart}

\usepackage[a4paper]{geometry}
\usepackage[T2A]{fontenc}
\usepackage[utf8]{inputenc}
\usepackage[ukrainian,russian,english]{babel}
\usepackage{comment}

\usepackage{amssymb}

\usepackage[mathscr]{eucal}
\usepackage{tikz}
\usetikzlibrary{positioning,calc,decorations.markings}
\usetikzlibrary{patterns}
\usetikzlibrary{calc}
\usepackage{mathtools}
\usepackage{enumerate}
\usepackage{accents}
\usepackage{dsfont}
\usepackage{multicol}

\usepackage{color}
\usepackage[colorlinks=true]{hyperref}
\hypersetup{urlcolor=blue,citecolor=red,linkcolor=blue}
\usepackage[initials,nobysame]{amsrefs}
\definecolor{darkgreen}{rgb}{0.13, 0.55, 0.13}

\numberwithin{equation}{section}
\theoremstyle{plain}
\newtheorem{theorem}{Theorem}[section]
\newtheorem{proposition}[theorem]{Proposition}
\newtheorem{corollary}[theorem]{Corollary}

\theoremstyle{definition}
\newtheorem*{rh-pb*}{Basic RH problem}
\newtheorem*{rh-I*}{RH problem normalized as $I$ at $\infty$}
\newtheorem*{sol-rh-pb*}{Soliton RH problem}
\newtheorem*{data*}{Data of this RH problem associated with $\BS{u_0(x)}$}
\theoremstyle{remark}
\newtheorem{remark}[theorem]{Remark}
\newtheorem*{notations*}{Notations}
\providecommand{\BS}[1]{\boldsymbol{#1}}  

\newcommand{\dd}{\mathrm{d}}
\newcommand{\eul}{\mathrm{e}}
\newcommand{\ii}{\mathrm{i}}
\newlength{\dhatheight}
\newcommand{\doublehat}[1]{%
    \settoheight{\dhatheight}{\ensuremath{\hat{#1}}}%
    \addtolength{\dhatheight}{-0.3ex}%
    \hat{\vphantom{\rule{1pt}{\dhatheight}}%
    \smash{\hat{#1}}}}

\renewcommand{\Im}{\operatorname{Im}}

\newcommand{\ord}{\mathrm{O}}
\DeclareMathOperator{\Res}{Res}

\newif\ifshort
\shorttrue

\title{The Short Wave equation on the half-line by the Unified Transform Method}

\author[Iryna Karpenko]{Iryna Karpenko}
\address{IK: Faculty of Mathematics\\ University of Vienna\\
Oskar-Morgenstern-Platz 1\\ 1090 Wien\\ Austria\\ and B. Verkin Institute for Low Temperature Physics and Engineering\\ 47, Nauky ave\\ 61103 Kharkiv\\ Ukraine}
\email{\href{mailto:iryna.karpenko@univie.ac.at}{iryna.karpenko@univie.ac.at}}

\author[Dmitry Shepelsky]{Dmitry Shepelsky}
\address{DS: B.Verkin Institute for Low Temperature Physics and Engineering, Kharkiv, Ukraine\\ and V.N.Karazin Kharkiv National University, Ukraine}
\email{\href{mailto:shepelsky@yahoo.com }{shepelsky@yahoo.com}}

\author[Ilona Tylevna]{Ilona Tylevna}
\address{IT: Faculty of Mathematics\\ University of Vienna\\
Oskar-Morgenstern-Platz 1\\ 1090 Wien\\ Austria}
\email{\href{mailto:ilonatylevna@gmail.com}{ilonatylevna@gmail.com}}

\begin{document}

\begin{abstract}
We study the initial-boundary value problem for the Short Wave equation on the half-line $x\ge 0$. A distinctive feature of this problem is that the boundary $x=0$ cannot be characterized \emph{a priori} as an inflow or outflow boundary: its character is determined dynamically by the sign of the unknown trace $u(0,t)$. This leads to different analyticity properties of the associated eigenfunctions and, consequently, to different spectral formulations in the regimes $u(0,t)\le0$ and $u(0,t)\ge0$.

Using the Unified Transform Method (aka the Fokas method), we formulate the solution in terms of matrix Riemann--Hilbert problems. We construct the associated spectral functions, derive the global relations, and show how the solution is reconstructed from the corresponding Riemann--Hilbert problem. In the case $u(0,t)\le0$, the solution is determined by the initial data alone (assuming an appropriate decay as $x\to \infty$), whereas for $u(0,t)\ge0$, compatible boundary data are also required for the construction.
\end{abstract}

\maketitle

\section{Introduction}

The Short Wave (SW) equation
\begin{equation}\label{SW_1}
    u_{txx}-2\omega u_x+2u_xu_{xx}+uu_{xxx}=0
\end{equation}
arises as a short-wave limit of the Camassa--Holm (CH) equation
\cite{BoutetDeMonvelShepelskyZielinski2011}
and models short capillary--gravity waves
\cites{Borzi2005,FaquirMannaNeveu2007}. It is also related to the
Dym hierarchy~\cites{Alber1995,Alber1999,Kruskal1975} and reduces,
in the limiting case when the linear dispersion parameter $\omega$ vanishes,
to the Hunter--Saxton equation
\cites{HunterSaxton1991,Lenells2008}. 
In what follows we will deal with the case $\omega>0$, where, due to simple rescaling, one can assume without loss of generality that $\omega=1$.

The SW equation is completely
integrable and admits the Lax pair representation
\begin{equation}\label{Lax}
\begin{aligned}
\psi_{xx} &= -k^2 (m+1)\psi, \\
\psi_t &= \Big(-\frac{1}{2k^2}-u\Big)\psi_x+\frac12 u_x\psi,
\end{aligned}
\end{equation}
where
$m=-u_{xx}$.
Assuming that $m+1\ge 0$, 
the SW equation admits the  ``conservation-law'' formulation
\begin{equation}\label{SW}
\left(\sqrt{m+1}\right)_t=-\left(u\sqrt{m+1}\right)_x, 
\qquad m=-u_{xx},
\end{equation}
which can be interpreted as follows:  the quantity $\sqrt{m+1}$ is transported by the velocity field $u(x,t)$.

The Cauchy problem for the SW equation on the line has been studied by the 
Riemann--Hilbert (RH) problem approach in~\cite{trogdon2015riemann}, which allowed studying smooth solutions, their long-time asymptotics, and cuspon-type solutions~\cite{BoutetDeMonvelShepelskyZielinski2011}.

The analysis becomes considerably more involved when the SW equation is
posed on domains with boundaries, such as the quarter-plane or a finite
time half-strip. A general framework for treating initial-boundary value (IBV)
problems for integrable equations was introduced by Fokas through the
unified transform method (UTM), also known as the Fokas method
\cite{F02}. Based on the simultaneous spectral analysis of both equations
of the Lax pair, the UTM characterizes the solution in terms of a matrix
RH problem whose spectral data are generated by the initial and boundary
values. Since its introduction, the method has been successfully applied
to a broad class of integrable equations
\cites{fokas2008unified,BS08,FIS,BFS06,BoutetShepelsky2009,KarpenkoShepelsky2026,Karpenko2026SineGordon,SKPB24,LF2009,L12}.

For the SW equation, however, the boundary behavior has a special feature
which is already visible from the conservation law \eqref{SW}. Introducing
\[
w=1-u_{xx},
\]
equation \eqref{SW_1} can be rewritten as
\[
w_t+u w_x=-2u_x w.
\]
Thus, the characteristics satisfy
\[
\frac{\dd x}{\dd t}=u(x(t),t).
\]
Assuming an appropriate decay at infinity,
the transport velocity $u(x,t)$ can be reconstructed nonlocally from $w$:
\[
u(x,t)=-\int_x^\infty (s-x)(w(s,t)-1)\,\dd s.
\]
It follows that the sign of $u(0,t)$ is not specified \emph{a priori}. Consequently, the
boundary $x=0$ of the domain $x\ge 0, t\ge 0$ is not intrinsically an inflow or an outflow boundary.
Instead, its character is determined dynamically by the solution itself:
if $u(0,t)>0$, then the characteristics enter the domain through the boundary,
whereas if $u(0,t)<0$, they leave it.

This distinction has a direct spectral counterpart. As in the case of the
CH equation, the analytical properties of the eigenfunctions
entering the RH formulation depend significantly on the sign of the boundary
value $u(0,t)$. Accordingly, in the present paper we consider separately
the cases $u(0,t)\le0$ and $u(0,t)\ge0$. If the boundary value changes sign
during the evolution, the analysis can be continued by partitioning the
domain into successive half-strips on which the sign of $u(0,t)$
remains fixed. For each such half-strip, the RH problem is formulated with
the solution reconstructed at the end of the preceding time interval serving as
the new initial data.

In the present paper, we study the IBV problem for
\eqref{SW} on the half-line $x\ge0$. The initial data are prescribed by
\begin{equation}\label{ic}
     u(x,0) =  u_0(x), \quad x \geq 0
\end{equation}
where $u_0(x)\to 0$ as $x\to\infty$. Consequently, 
\begin{equation*}
     m(x,0) = m_0(x), \quad x \geq 0,
\end{equation*}
where $ m_0(x):=- u_{0xx}(x)$. The boundary traces, when involved in the formulation, are denoted by
\begin{equation}\label{boundary}
     u(0,t) = v_0(t), \quad u_x(0,t) = v_1(t), \quad 
    u_{xx}(0,t) =  v_2(t), \qquad
    0\leq t \leq T<\infty,
\end{equation}

We assume that
\[
m_0(x)+1>0,\qquad x\ge0,
\]
and, whenever $v_2$ is prescribed,
\[
1-v_2(t)>0,\qquad 0\le t\le T,
\]
so that $m(0,t)+1>0$. We also impose the corner compatibility condition
$\partial_x^j  u_0(0)=v_j(0)$, $j=0,1,2$. Observe that the conditions $m(x,0)+1> 0$ and $1-v_2(t)>0$ (the latter whenever $v_2$ is prescribed on the inflow boundary) ensure that $ m(x,t)+1>0$ for all $t$ as long as the solution exists (for the original CH equation, see~\cite{CE98}).  The positivity condition $m+1>0$ is essential for the spectral analysis:
it provides the analytic properties of the eigenfunctions needed to construct the associated RH problems.

The main contributions of the paper are threefold. First, motivated by the characteristic structure of the SW equation, we identify the sign of the boundary trace $u(0,t)$ as the quantity that distinguishes the two relevant boundary regimes. Second, we show that these two regimes lead to different analyticity properties of the eigenfunctions and therefore to two distinct RH formulations, corresponding to $u(0,t)\le0$ and $u(0,t)\ge0$. Third, we construct the associated spectral functions, derive the global relations, and obtain RH-based results, including reconstruction, existence, and uniqueness. In particular, in the regime $u(0,t)\le0$, the solution is determined by the initial data alone, whereas in the regime $u(0,t)\ge0$, compatible boundary data are required.








The paper is organized as follows.

In Section \ref{sec:2}, we introduce two transformations of the Lax pair,
adapted respectively to the behavior of the spectral parameter near
$k=\infty$ and near $k=0$. We define the associated eigenfunctions and
describe their analytic properties. Particular
attention is paid to the dependence of these properties on the sign of the
boundary value $u(0,t)$. We also introduce the spectral functions associated
with the initial and boundary values.

In Section \ref{sec:3}, the direct spectral problems are studied. The
spectral mappings generated by the initial profile and by the boundary
traces are described, together with the corresponding properties of the
spectral functions.

In Section \ref{sec:4}, we derive the global relations. These relations express the compatibility of the initial and boundary values in spectral
terms.

In Section \ref{sec:5}, the inverse spectral mappings are formulated in
terms of RH problems.

In Section \ref{sec:SWinyt}, we first discuss 
the relationship between solutions of the SW equation in the original variables $(x,t)$
and solutions of the SW equation in variables $(y,t)$ suitable for the RH formalism.
Then we discuss 
how a function solving, locally, the SW equation 
in $(y,t)$ variables appears from the solution of a RH
problem parametrized by $y$ and $t$.

In Section \ref{sec:6}, we formulate the RH problems
corresponding to the full IBV problem. We prove that
for the regime $u(0,t)\le0$, the solution is uniquely determined by the
initial data alone; see Corollary \ref{cor:uniq}. For the regime $u(0,t)\ge0$, we derive the
corresponding RH formulation in terms of both the initial and boundary
data and establish sufficient conditions for the existence and uniqueness
of solutions of the IBV problem; see Theorem
\ref{prop:ex}. In both cases, the
solution of the SW equation is recovered from the solution of the
corresponding RH problem; see
Theorems \ref{Prop:rep} and \ref{Prop:rep_geq}.

\begin{notations*}
Throughout the paper, we denote the standard Pauli matrices by
\[
\sigma_1\coloneqq
\begin{pmatrix}0&1\\1&0\end{pmatrix},
\qquad
\sigma_2\coloneqq
\begin{pmatrix}0&-\ii\\ \ii&0\end{pmatrix},
\qquad
\sigma_3\coloneqq
\begin{pmatrix}1&0\\0&-1\end{pmatrix}.
\]
For a $2\times2$ matrix $A$, we use the notation
\[
e^{\hat\sigma_3}A\coloneqq e^{\sigma_3}Ae^{-\sigma_3}.
\]
We also set
\[
\mathbb C^\pm\coloneqq\{k\in\mathbb C:\ \pm\Im k>0\},
\]
and denote the Schwarz conjugate of a function $f$ by
\[
f^*(k)\coloneqq \overline{f(\bar k)}.
\]
Finally, for an interval $\Omega\subseteq\mathbb R$, we use the weighted Sobolev space
\[
H^{k,j}(\Omega)=\{f(x)\in L^1_{\mathrm{loc}}(\Omega):~ f(x), x^{2j}f(x), f'(x), x^{2j}f'(x),...,f^{(k)}(x), x^{2j}f^{(k)}(x) \in L^2(\Omega)\}.
\]
In particular,when $j=0$, this reduces to the standard Sobolev space,
\[H^{k,0}(\Omega)=H^{k}(\Omega).\]

\end{notations*}

\section{Eigenfunctions and Spectral Functions}\label{sec:2}

Assume that we are given a solution $u(x,t)$ of the SW equation in the domain $(x,t)\in(0,\infty)\times (0,T)$ such that 
\begin{enumerate}[(i)]
    \item $u(\cdot, t)\in  H^{4}(0,\infty)$ and $u_{xx}(\cdot, t)\in  H^{2,1}(0,\infty)$ for all $t\in[0,T]$,
    \item $m(x,0)+1>0$ for $x\ge0$,
    \item the boundary traces satisfy $v_j\in H^{1,0}(0,T)$, $ j=0,1,2$, and $1-v_2(t)>0$ for $0\le t\le T$.
\end{enumerate}

\begin{remark}
    The assumption (i), together with the positivity of $m+1$, implies that $\frac{m_x}{4(m+1)}$, $m\in L^1(0,\infty)$  for all $t\in[0,T]$, which allows us to apply the Neumann series argument below.
\end{remark}

 We analyze the analytic properties, in the complex plane of the spectral parameter, of the associated solutions of the Lax pair equations, referred to as eigenfunctions. This analysis is carried out to derive relations among these eigenfunctions that lead to a factorization RH problem.

\begin{remark}
 The condition $m(x,0)+1>0$ is propagated by the SW flow; hence
$
m(x,t)+1>0
$
for all $t$ for which the solution exists. An analogous property is known for the original CH equation; see~\cite{CE98}.
\end{remark}

\subsection{Eigenfunctions near $k=\infty$}

To control the large $k$ behavior, we introduce 

\begin{equation*}
\tilde{\Phi}_\infty
= G
\begin{pmatrix}
\psi \\ \psi_x
\end{pmatrix},
\end{equation*}
where
\begin{equation*}
\begin{aligned}
G(x,t,k)
&= G_0(k)
\begin{pmatrix}
(m(x,t)+1)^{1/4} & 0\\
0 & (m(x,t)+1)^{-1/4}
\end{pmatrix},\\
G_0(k)
&= \frac{1}{2}
\begin{pmatrix}
1 & -\frac{1}{ik}\\
1 & \frac{1}{ik}
\end{pmatrix}.
\end{aligned}
\end{equation*}

This transforms \eqref{Lax} into

\begin{subequations}\label{Lax-Q-form}
\begin{align}\label{x-eq}
&\tilde{\Phi}_{\infty x} + ik\sqrt{m+1}\sigma_3\tilde{\Phi}_\infty = U_\infty\tilde{\Phi}_\infty,\\
&\tilde{\Phi}_{\infty t} - ik\left(\frac{1}{2k^2}+u\sqrt{m+1}\right)\sigma_3
\tilde{\Phi}_\infty = V_\infty\tilde{\Phi}_\infty,
\end{align}
\end{subequations}
where $U_\infty\equiv U_\infty(x,t,k)$ and $V_\infty\equiv V_\infty(x,t,k)$ are given by
\begin{subequations}\label{Lax-1}
\begin{equation}\label{U-hat}
U_\infty=\frac{m_x}{4(m+1)}\sigma_1
\end{equation}
and 
\begin{equation}\label{hat-V}
\begin{aligned}
 V_\infty&=
\left(\frac{m_t}{4(m+1)}+\frac{u_x}{2}\right)\sigma_1
- \frac{1}{4ik}
\left(
\sqrt{m+1} + \frac{1}{\sqrt{m+1}} -2
\right)\sigma_3+\frac{1}{4k}\left(\sqrt{m+1}-\frac{1}{\sqrt{m+1}}\right)\sigma_2.
\end{aligned}
\end{equation}
\end{subequations}

Define $\mathcal Q$ by 
\begin{subequations}\label{Qp}
\begin{equation}\label{Q}
\mathcal Q(x,t,k):= p(x,t,k)\sigma_3, 
\end{equation}
with
\begin{equation}\label{p}
p(x,t,k):= \ii k\left(\int_0^{x} \left(\sqrt{ m+1}\right)(\xi,t)\dd\xi-\int_0^{t} \left(u\sqrt{ m+1}\right)(0,\tau)\dd\tau-\frac{t}{2 k^2}\right).
\end{equation}
\end{subequations}
Then 
\[
p_x=ik\sqrt{m+1},\qquad 
p_t=- ik\left(\frac{1}{2k^2}+u\sqrt{m+1}\right),\qquad 
p(0,0,k)=0,
\]
and 
equations in \eqref{Lax-Q-form} take the form
\begin{subequations}\label{phi-tilde}
   \begin{align}
&\tilde \Phi_{\infty x}+\mathcal Q_x\tilde \Phi_\infty = U_\infty \tilde \Phi_\infty,\\
&\tilde \Phi_{\infty t} +\mathcal Q_t\tilde \Phi_\infty = V_\infty \tilde \Phi_\infty,
\end{align} 
\end{subequations}
which is appropriate to control the large-$k$ behavior of 
its solutions.

\begin{remark}
    Notice that \eqref{SW} implies the ``conservation law''
\begin{equation} \label{cons_law}
\nu(t)=\nu(0)-\eta(t),  
\end{equation}
where

\[
\nu(t):=\int_0^\infty \left(\sqrt{ m+1}-1\right)(\xi,t)\dd \xi,\qquad 
    \eta(t):=-\int_0^t \left(u \sqrt{ m+1}\right)(0,\tau)\dd \tau.
\]
    
\end{remark}

Introduce the solutions $\tilde\Phi_{\infty j}$, $j=1,2,3$ to \eqref{phi-tilde} through the solutions $\Phi_{\infty j}$ of the integral equations, which are fixed by the associated
initial 
integration point. Namely, $\tilde \Phi_{\infty j }(x,t,k)=\Phi_{\infty j }(x,t,k)\eul^{-p(x,t,k)\sigma_3}$, where $\Phi_{\infty j}$, $j=1,2,3$ satisfy
the integral equation 
\begin{equation*}
\Phi_{\infty}(x,t,k)=I+\int_{(x^*,t^*)}^{(x,t)}
	\eul^{(p(y,\tau,k)-p(x,t,k))\hat\sigma_3}( U_\infty\Phi_\infty \dd y+V_\infty\Phi_\infty \dd \tau)(y,\tau,k),
\end{equation*}
where $(x^*,t^*)$ is taken respectively as $(0,T)$, $(0,0)$ and $(+\infty,t)$, and the integration paths are as in Figure \ref{fig:integration-paths} (due to the compatibility of the Lax pair equations, the integrals don't depend on integration paths). More precisely,
\begin{equation}\label{inteq_inf1}
    \begin{aligned}
\Phi_{\infty 1}(x,t,&k)=I+
\int_{0}^{x}
	\eul^{-\ii k\int_y^x\left(\sqrt{ m+1}\right)(\xi,t)\dd \xi\hat\sigma_3}( U_\infty\Phi_{\infty1})(y,t,k) \dd y   \\
&-\eul^{-\ii k\int_0^x\left(\sqrt{ m+1}\right)(\xi,t)\dd \xi\hat\sigma_3}
\int_{t}^{T}
	\eul^{\left(-\ii k \int_t^\tau \left(u \sqrt{ m+1}\right)(0,s)\dd s-\frac{\ii}{2 k}(\tau-t)\right)\hat\sigma_3}( V_\infty\Phi_{\infty1})(0,\tau,k) \dd \tau,
\end{aligned}
\end{equation}
\begin{equation}\label{inteq_inf2}
    \begin{aligned}
\Phi_{\infty 2}(x,t,&k)=I+
\int_{0}^{x}
	\eul^{-\ii k\int_y^x\left(\sqrt{ m+1}\right)(\xi,t)\dd \xi\hat\sigma_3}( U_\infty\Phi_{\infty2})(y,t,k) \dd y   \\
&+\eul^{-\ii k\int_0^x\left(\sqrt{ m+1}\right)(\xi,t)\dd \xi\hat\sigma_3}
\int_{0}^{t}
	\eul^{\left(-\ii k \int_t^\tau \left(u \sqrt{ m+1}\right)(0,s)\dd s-\frac{\ii}{2 k}(\tau-t)\right)\hat\sigma_3}( V_\infty\Phi_{\infty2})(0,\tau,k) \dd \tau,
\end{aligned}
\end{equation}    

\begin{equation}\label{inteq_inf3}
    \begin{aligned}
\Phi_{\infty 3}(x,t,k)=&I-
\int_{x}^{+\infty}
	\eul^{\ii k\int_x^y\left(\sqrt{ m+1}\right)(\xi,t)\dd \xi\hat\sigma_3}( U_\infty\Phi_{\infty3})(y,t,k) \dd y.
\end{aligned}
\end{equation}

\begin{figure}[ht]
\centering
\begin{tikzpicture}[scale=1]
  \def\x{2.2}  
  \def\t{1.3}  
  \def\T{2.6}  
  \def\W{3.2}  

  \begin{scope}
    \draw[->] (0,0) -- (\W,0) node[below right] {$y$};
    \draw[->] (0,0) -- (0,\W) node[above left] {$\tau$};

    \node[below left] at (0,0) {$0$};
    \node[below]      at (\x,0) {$x$};
    \node[left]       at (0,\t) {$t$};

    \fill (0,\T) circle (1.2pt) node[left] {$T$};

    \draw[dashed] (\x,0) -- (\x,\t);

    \draw[very thick,postaction={decorate},
          decoration={markings, mark=at position 0.5 with {\arrow{latex}}}] 
          (0,\T) -- (0,\t);
    \draw[very thick,postaction={decorate},
          decoration={markings, mark=at position 0.5 with {\arrow{latex}}}] 
          (0,\t) -- (\x,\t);

    \fill (\x,\t) circle (1.4pt);
  \end{scope}

  \begin{scope}[xshift=5.2cm]
    \draw[->] (0,0) -- (\W,0) node[below right] {$y$};
    \draw[->] (0,0) -- (0,\W) node[above left] {$\tau$};

    \node[below left] at (0,0) {$0$};
    \node[below]      at (\x,0) {$x$};
    \node[left]       at (0,\t) {$t$};

    \fill (0,\T) circle (1.2pt) node[left] {$T$};

    \draw[dashed] (\x,0) -- (\x,\t);

    \draw[very thick,postaction={decorate},
          decoration={markings, mark=at position 0.5 with {\arrow{latex}}}] 
          (0,0) -- (0,\t);
    \draw[very thick,postaction={decorate},
          decoration={markings, mark=at position 0.5 with {\arrow{latex}}}] 
          (0,\t) -- (\x,\t);

    \fill (\x,\t) circle (1.4pt);
  \end{scope}

  \begin{scope}[xshift=10.4cm]
    \draw[->] (0,0) -- (\W,0) node[below right] {$y$};
    \draw[->] (0,0) -- (0,\W) node[above left] {$\tau$};

    \node[below left] at (0,0) {$0$};
    \node[below]      at (\x,0) {$x$};
    \node[left]       at (0,\t) {$t$};

    \fill (0,\T) circle (1.2pt) node[left] {$T$};

    \draw[dashed] (\x,0) -- (\x,\t);
    \draw[dashed] (0,\t) -- (\x,\t);

    \draw[very thick,postaction={decorate},
          decoration={markings, mark=at position 0.5 with {\arrow{latex}}}] 
          (\W,\t) -- (\x,\t);

    \fill (\x,\t) circle (1.4pt);
  \end{scope}
\end{tikzpicture}

\caption{Paths of integration for $\Phi_{01}, \Phi_{02},$ and $\Phi_{03}$ 
($\Phi_{\infty1}, \Phi_{\infty2},$ and $\Phi_{\infty3}$).}
\label{fig:integration-paths}
\end{figure}

The domains in the complex $k$-plane in which the exponential factors are bounded are determined by the signature table shown in Figure~\ref{fig:sign}. In order for the analytic properties of $\Phi_{\infty 1}$ and $\Phi_{\infty 2}$ to remain unchanged for all $x\geq 0$ and $t\geq 0$, the quantity
$
(u\sqrt{m+1})(0,t)
$
must preserve its sign on the time interval under consideration. Thus, the two cases
\[
u(0,t)\leq 0,\qquad 0\leq t<T,
\]
and
\[
u(0,t)\geq 0,\qquad 0\leq t<T,
\]
lead to different analytic properties for the columns of $\Phi_{\infty 1}$ and $\Phi_{\infty 2}$.

\begin{figure}[ht]
    \centering
\begin{tikzpicture}[scale=1.2]

        \node at (0,-1) {$\mathrm{sign}\,\Im k$};
    
    \draw[thick] (-2.5,0) -- (2.5,0);
    
    \node at (0,0.5) {$+$};
    \node at (0,-0.5) {$-$};

    \node at (7,-1) {$\mathrm{sign}\,\Im \frac{1}{k}$};
    
    \draw[thick] (4.5,0) -- (9.5,0);
    
    \node at (7,0.5) {$-$};
    \node at (7,-0.5) {$+$};
\end{tikzpicture}
    \caption{Signature tables for $p(x,t,k)$  }
    \label{fig:sign}
\end{figure}

 We first consider the case $ u(0,t) \leq 0$.
Let $A^{(1)}$ and $A^{(2)}$ denote the columns of a $2\times 2$ matrix $A = \bigl( A^{(1)}\ \ A^{(2)} \bigr)$. With this notation, using Neumann series expansions, we obtain the following properties of $\Phi_{\infty i}^{(j)}(x,t,k)$:

\begin{enumerate}
    \item $\Phi_{\infty 1}^{(1)}(x,t,k)$ is analytic in $\mathbb{C}\setminus\{0\}$.
 Moreover, $\Phi_{\infty 1}^{(1)}(0,t,k)$=$\begin{pmatrix}
        1\\0
    \end{pmatrix}+O(\frac{1}{k})$ as $k\to\infty$ in $\mathbb{C}^-$.

    \item $\Phi_{\infty 1}^{(2)}(x,t,k)$ is analytic in $\mathbb{C}\setminus\{0\}$.
 Moreover, $\Phi_{\infty 1}^{(2)}(0,t,k)$=$\begin{pmatrix}
        0\\1
    \end{pmatrix}+O(\frac{1}{k})$ as $k\to\infty$ in $\mathbb{C}^+$.

     \item $\Phi_{\infty 2}^{(1)}(x,t,k)$ is analytic in $\mathbb{C}\setminus\{0\}$ and $\Phi_{\infty 2}^{(1)}(x,0,k)$ is analytic in $\mathbb{C}$.
 Moreover, $\Phi_{\infty 2}^{(1)}(x,t,k)$=$\begin{pmatrix}
        1\\0
    \end{pmatrix}+O(\frac{1}{k})$ as $k\to\infty$ in $\mathbb{C}^+$.

    \item $\Phi_{\infty 2}^{(2)}(x,t,k)$ is analytic in $\mathbb{C}\setminus\{0\}$ and $\Phi_{\infty 2}^{(2)}(x,0,k)$ is analytic in $\mathbb{C}$.
 Moreover, $\Phi_{\infty 2}^{(2)}(x,t,k)$=$\begin{pmatrix}
        0\\1
    \end{pmatrix}+O(\frac{1}{k})$ as $k\to\infty$ in $\mathbb{C}^-$.

      \item $\Phi_{\infty 3}^{(1)}(x,t,k)$ is analytic in $\mathbb{C}^-$ and continuous up to the boundary.
 Moreover, $\Phi_{\infty 3}^{(1)}(x,t,k)$=$\begin{pmatrix}
        1\\0
    \end{pmatrix}+O(\frac{1}{k})$ as $k\to\infty$.

    \item $\Phi_{\infty 3}^{(2)}(x,t,k)$ is analytic in $\mathbb{C}^+$ and continuous up to the boundary.
 Moreover, $\Phi_{\infty 3}^{(2)}(x,t,k)$=$\begin{pmatrix}
        0\\1
    \end{pmatrix}+O(\frac{1}{k})$ as $k\to\infty$.   
    
\end{enumerate}

We next consider the case \textbf{$ u(0,t) \geq 0$}. In this case, the corresponding analytic properties of the columns $\Phi_{\infty i}^{(j)}(x,t,k)$ are as follows.

\begin{enumerate}

   \item $\Phi_{\infty 1}^{(1)}(x,t,k)$ is analytic in $\mathbb{C}\setminus\{0\}$.
 Moreover, $\Phi_{\infty 1}^{(1)}(x,t,k)$=$\begin{pmatrix}
        1\\0
    \end{pmatrix}+O(\frac{1}{k})$ as $k\to\infty$ in $\mathbb{C}^+$.

    \item $\Phi_{\infty 1}^{(2)}(x,t,k)$ is analytic in $\mathbb{C}\setminus\{0\}$.
 Moreover, $\Phi_{\infty 1}^{(2)}(x,t,k)$=$\begin{pmatrix}
        0\\1
    \end{pmatrix}+O(\frac{1}{k})$ as $k\to\infty$ in $\mathbb{C}^-$.

     \item $\Phi_{\infty 2}^{(1)}(x,t,k)$ is analytic in $\mathbb{C}\setminus\{0\}$ and $\Phi_{\infty 2}^{(1)}(x,0,k)$ is analytic in $\mathbb{C}$.
 Moreover, $\Phi_{\infty 2}^{(1)}(x,0,k)$=$\begin{pmatrix}
        1\\0
    \end{pmatrix}+O(\frac{1}{k})$ as $k\to\infty$ in $\mathbb{C}^+$; $\Phi_{\infty 2}^{(1)}(0,t,k)$=$\begin{pmatrix}
        1\\0
    \end{pmatrix}+O(\frac{1}{k})$ as $k\to\infty$ in $\mathbb{C}^-$.

    \item $\Phi_{\infty 2}^{(2)}(x,t,k)$ is analytic in $\mathbb{C}\setminus\{0\}$ and $\Phi_{\infty 2}^{(2)}(x,0,k)$ is analytic in $\mathbb{C}$.
 Moreover, $\Phi_{\infty 2}^{(2)}(x,0,k)$=$\begin{pmatrix}
        0\\1
    \end{pmatrix}+O(\frac{1}{k})$ as $k\to\infty$ in $\mathbb{C}^-$;  Moreover, $\Phi_{\infty 2}^{(2)}(0,t,k)$=$\begin{pmatrix}
        0\\1
    \end{pmatrix}+O(\frac{1}{k})$ as $k\to\infty$ in $\mathbb{C}^+$.

       \item $\Phi_{\infty 3}^{(1)}(x,t,k)$ is analytic in $\mathbb{C}^-$ and continuous up to the boundary.
 Moreover, $\Phi_{\infty 3}^{(1)}(x,t,k)$=$\begin{pmatrix}
        1\\0
    \end{pmatrix}+O(\frac{1}{k})$ as $k\to\infty$.

    \item $\Phi_{\infty 3}^{(2)}(x,t,k)$ is analytic in $\mathbb{C}^+$ and continuous up to the boundary.
 Moreover, $\Phi_{\infty 3}^{(2)}(x,t,k)$=$\begin{pmatrix}
        0\\1
    \end{pmatrix}+O(\frac{1}{k})$ as $k\to\infty$.

\end{enumerate}

\begin{remark}
    In the present problem, the spectral parameter enters through $k$ and $\frac{1}{k}$. This distinguishes the corresponding RH formulation from that for the CH equation, where the spectral dependence involves $k$ and $\frac{k}{(4k^2+1)}$. In particular, in the present case, the signs of the imaginary parts of $k$ and $\frac{1}{k}$ separate the complex $k$-plane simply into the upper and lower half-planes. By contrast, in the CH case, the presence of $\frac{k}{(4k^2+1)}$ leads to a different partition of the spectral plane. In both problems, however, the formulation depends on the sign of the solution at $x=0$.
\end{remark}

Further, observing  that $U_\infty(k)\equiv U_\infty(x,t,k)$ and  $V_\infty(k)\equiv V_\infty(x,t,k)$ satisfy the symmetries
\begin{subequations}\label{sym-UV}
\begin{alignat}{3}\label{sym-U}
U_\infty(k)&=\sigma_1\overline{U_\infty(\bar k)}\sigma_1,&\qquad&U_\infty(k)=\sigma_1 U_\infty(-k)\sigma_1\\
V_\infty(k)&=\sigma_1\overline{V_\infty(\bar k)}\sigma_1,&\qquad&V_\infty(k)=\sigma_1 V_\infty(-k)\sigma_1,
\end{alignat}
\end{subequations}
and that  $p(k)\equiv p(x,t,k)$ satisfies the symmetries
\begin{equation}\label{sym-p}
p^*(k)=-p(k)=p(-k),
\end{equation}
it follows that
$\Phi_{\infty j}$ also satisfy the same symmetries as in \eqref{sym-UV}:
\begin{equation}\label{sym-Phi}
\Phi_{\infty j}(k)=\sigma_1\overline{\Phi_{\infty j}(\bar k)}\sigma_1,\quad\Phi_{\infty j}(k)=\sigma_1\Phi_{\infty j}(-k)\sigma_1.
\end{equation}

Also, since  the coefficients in  \eqref{Lax-1} are traceless matrices, 
it follows that
$\det \Phi_{\infty j}\equiv 1$.

\subsection{Eigenfunctions near $k=0$} Since the Lax pair is singular at both $k=0$ and $k=\infty$, we use a second gauge transformation to analyze the behavior of the eigenfunctions near $k=0$. The eigenfunctions obtained from the two gauge transformations will later be related on the auxiliary circle $\{|k|=\epsilon\}$.

Introducing
\begin{equation*}
\tilde{\Phi}_0
= G_0
\begin{pmatrix}
\psi \\ \psi_x
\end{pmatrix},
\end{equation*}
we transform the Lax pair \eqref{Lax} into
\begin{subequations}\label{Lax-2}
\begin{align}
&\tilde{\Phi}_{0x} + ik\sigma_3 \tilde{\Phi}_0 = U_0 \tilde{\Phi}_0,\\
&\tilde{\Phi}_{0t} + \frac{1}{2ik}\sigma_3 \tilde{\Phi}_0 = V_0 \tilde{\Phi}_0
\end{align}
\end{subequations}
where
\begin{subequations}
\begin{equation}\label{U0-hat}
 U_0(x,t,k):=-\frac{ik}{2} m (i\sigma_2+\sigma_3),
\end{equation}
and
\begin{equation}\label{hat-V0}
 V_0(x,t, k):=\frac{u_x}{2}\sigma_1
+ iku\left(\sigma_3+\frac{m}{2}(i\sigma_2+\sigma_3)\right).
\end{equation}
\end{subequations}
Introduce (compare with \eqref{p})
\begin{equation}\label{p_0mu}
p_0(x,t,k):=\ii k \left(x-\frac{t}{2 k^2}\right).
\end{equation}
Then introduce the solutions $\tilde \Phi_{0j}(x,t,k )$, 
$j=1,2,3$ to Equations 
\eqref{Lax-2} similarly to $\tilde \Phi_{\infty j}(x,t,k )$:
$\tilde\Phi_{0 j}(x,t,k):= \Phi_{0 j}(x,t,k )\eul^{-p_0(x,t,k)\sigma_3}$, where $\{\Phi_{0 j}(x,t,k)\}$ are the solutions of the integral equations:

\begin{equation}\label{inteq_01}
    \begin{aligned}
\Phi_{0 1}(x,t,k )=&I+
\int_{0}^{x}
	\eul^{-\ii k(x-y)\hat\sigma_3}( U_0\Phi_{01})(y,t,k ) \dd y  - \eul^{-\ii k x\hat\sigma_3}
\int_{t}^{T}
	\eul^{-\frac{\ii}{2 k}(\tau-t)\hat\sigma_3}( V_0\Phi_{01})(0,\tau,k ) \dd \tau,
\end{aligned}
\end{equation}

\begin{equation}\label{inteq_02}
  \begin{aligned}
\Phi_{02}(x,t,k )=&I+
\int_{0}^{x}
	\eul^{-\ii k(x-y)\hat\sigma_3}( U_0\Phi_{02})(y,t,k ) \dd y  + \eul^{-\ii k x\hat\sigma_3}
\int_{0}^{t}
	\eul^{-\frac{\ii}{2 k}(\tau-t)\hat\sigma_3}( V_0\Phi_{02})(0,\tau,k ) \dd \tau,
\end{aligned}  
\end{equation}    

\begin{equation}\label{inteq_03}
  \begin{aligned}
\Phi_{0 3}(x,t,k )=&I-
\int_{x}^{+\infty}
	\eul^{\ii k(y-x)\hat\sigma_3}( U_0\Phi_{03})(y,t,k ) \dd y.
\end{aligned}  
\end{equation}    

The properties of $\Phi_{0}(x,t,k )$ are independent of the sign of $u(0,t)$:

\begin{enumerate}
    \item $\Phi_{0 1}^{(1)}(x,t,k)$ is analytic in $\mathbb{C}\setminus\{0\}$.
 Moreover, $\Phi_{0 1}^{(1)}(x,t,k)=\begin{pmatrix}
        1\\0
    \end{pmatrix}+O(k)$ as $k\to 0$ in $\mathbb{C}^-$.

    \item $\Phi_{0 1}^{(2)}(x,t,k)$ is analytic in $\mathbb{C}\setminus\{0\}$.
 Moreover, $\Phi_{0 1}^{(2)}(x,t,k)=\begin{pmatrix}
        0\\1
    \end{pmatrix}+O(k)$ as $k\to 0$ in $\mathbb{C}^+$.

     \item $\Phi_{0 2}^{(1)}(x,t,k)$ is analytic in $\mathbb{C}\setminus\{0\}$.
 Moreover, $\Phi_{0 2}^{(1)}(x,t,k)=\begin{pmatrix}
        1\\0
    \end{pmatrix}+O(k)$ as $k\to 0$ in $\mathbb{C}^+$.

    \item $\Phi_{0 2}^{(2)}(x,t,k)$ is analytic in $\mathbb{C}\setminus\{0\}$ and $\Phi_{0 2}^{(2)}(x,0,k)$ is analytic in $\mathbb{C}$.
 Moreover, $\Phi_{0 2}^{(2)}(x,t,k)=\begin{pmatrix}
        0\\1
    \end{pmatrix}+O(k)$ as  $k\to 0$ in $\mathbb{C}^-$.

      \item $\Phi_{0 3}^{(1)}(x,t, k)$ is analytic in $\mathbb{C}^-$ and continuous up to the boundary.
 Moreover, $\Phi_{0 3}^{(1)}(x,t, k)$=$\begin{pmatrix}
        1\\0
    \end{pmatrix}+O(k)$ as $k\to 0$ in $\mathbb{C}^-$.

    \item $\Phi_{0 3}^{(2)}(x,t,k)$ is analytic in $\mathbb{C}^+$ and continuous up to the boundary.
 Moreover, $\Phi_{03}^{(2)}(x,t,k)=\begin{pmatrix}
        0\\1
    \end{pmatrix}+O(k)$ as $ k\to 0$ in $\mathbb{C}^+$.
\end{enumerate}

Since $U_0(x,t,k)$, $ V_0(x,t,k)$ satisfy the symmetries \eqref{sym-UV} and $p_0(x,t,k)$ satisfy the symmetries \eqref{sym-p}, it follows that $\Phi_{0 j}(x,t,k)$ satisfy the symmetries \eqref{sym-Phi}. 

\subsection{Spectral functions}

The functions $\Phi_{\infty j}$,
being the solutions of both equations of \eqref{Lax-Q-form}, are related (where they are defined) by matrices independent of $x$ and $t$:
\begin{subequations}\label{rel_inf}
    \begin{align}
        &\Phi_{\infty 3}(x,t,k)=\Phi_{\infty 2}(x,t,k)\eul^{-p(x,t,k)\sigma_3}s(k)\eul^{p(x,t,k)\sigma_3},\qquad k\in {\mathbb C}^+,\\
        &\Phi_{\infty 1}(x,t,k)=\Phi_{\infty 2}(x,t,k)\eul^{-p(x,t,k)\sigma_3}S(k)\eul^{p(x,t,k)\sigma_3}.
    \end{align}
\end{subequations}
Due to the symmetries of $U_\infty$ and $V_\infty$,  $s(k)$ and $S(k)$
can be written as 
        \begin{equation}
        \label{s}
s(k)=\Phi_{\infty 3}(0,0,k)=\begin{pmatrix}
        a^*(k) & b(k)\\
        b^*( k) & a(k)
    \end{pmatrix},\qquad k\in {\mathbb C}^+
    \end{equation}
and
              \begin{equation}
        \label{S}
S(k)=\Phi_{\infty 1}(0,0,k)=\begin{pmatrix}
        A^*( k) & B(k)\\
        B^*( k) & A( k)
    \end{pmatrix},
    \end{equation}
    with some $a( k)$, $b( k)$, $A( k)$, and  $B( k)$.

Similarly, the functions $\Phi_{0 j}$ are related (in the domain where they are defined) by matrices independent of $x$ and $t$:

\begin{subequations}\label{rel_0}
    \begin{align}\label{rel_0-a}
        &\Phi_{0 3}(x,t, k)=\Phi_{0 2}(x,t, k)\eul^{-p_0(x,t, k)\sigma_3}\tilde s( k)\eul^{p_0(x,t, k)\sigma_3},\qquad  k\in {\mathbb C}^+,\\ \label{rel_0-b}
        &\Phi_{0 1}(x,t, k)=\Phi_{0 2}(x,t, k)\eul^{-p_0(x,t, k)\sigma_3}\tilde S( k)\eul^{p_0(x,t, k)\sigma_3}.
    \end{align}
\end{subequations}
In turn,  symmetries of $U_0$ and $V_0$ imply
    \begin{equation}
        \label{tils}
        \tilde s( k)=\Phi_{0 3}(0,0, k)=\begin{pmatrix}
       \tilde a^*(  k) & \tilde b( k)\\
        \tilde b^*(  k) & \tilde a( k)
    \end{pmatrix}
    \end{equation}
and
          \begin{equation}
        \label{tilS}
\tilde S( k)=\Phi_{0 1}(0,0, k)=\begin{pmatrix}
        \tilde A^*(  k) &\tilde  B( k)\\
        \tilde B^*(  k) &\tilde  A( k)
    \end{pmatrix},
    \end{equation}
with some $\tilde a( k)$, $\tilde b( k)$, $\tilde A( k)$, and  $\tilde B( k)$.

Notice that
\[
G(x,t,k)G_0^{-1}(k)
= \frac12
\begin{pmatrix}
q+\frac{1}{q} & q-\frac{1}{q}\\
q-\frac{1}{q} & q+\frac{1}{q}
\end{pmatrix}=:Q(x,t),
\]
where
\[
q(x,t) := (m(x,t)+1)^{1/4}.
\]

Since $\Phi_0$ and $\Phi_\infty$ 
come from the 
 same system of ODEs \eqref{Lax}, they are related as follows:

\begin{subequations}\label{Phi_0_inf_rel}
\begin{equation}\label{Phi_0_inf_rel_}
 \Phi_{\infty j}(x,t, k)=Q(x,t)\Phi_{0 j}(x,t, k)\eul^{-p_0(x,t, k)\sigma_3}C_j( k)\eul^{p(x,t, k)\sigma_3}   
\end{equation}
with $C_j( k)$ independent of $x$ and $t$ given by:
\begin{align}\label{Phi_0_inf_coeff}
      C_1&=\eul^{-\frac{\ii T}{2 k}\sigma_3}Q^{-1}(0,T)\eul^{-\ii k\left(\eta(T)-\frac{T}{2 k^2}\right)\sigma_3},\\\label{Phi_0_inf_coeff2}
    C_2&=Q^{-1}(0,0),\\\label{Phi_0_inf_coeff3}
    C_3&=\eul^{-\ii k \nu(0)\sigma_3}.
\end{align}
\end{subequations}
In particular,
\begin{align}\label{s_via_til_s}
          &s(k)=Q(0,0)\tilde s(k)\eul^{-\ii k \nu(0)\sigma_3},\\\label{S_via_til_S}
          &S(k)=Q(0,0)\tilde S(k)C_1(k)=Q(0,0)\tilde S(k)\eul^{-\frac{\ii T}{2 k}\sigma_3}Q^{-1}(0,T)\eul^{-\ii k\left(\eta(T)-\frac{T}{2 k^2}\right)\sigma_3}
        \end{align}

In what follows, we use the notations
\[
Q(0,0)=
\begin{pmatrix}
    \kappa_1^0 & \kappa_2^0 \\
    \kappa_2^0 & \kappa_1^0
\end{pmatrix},
\]
where $\kappa_1^0$ and $\kappa_2^0$ are expressed in terms of a single quantity 
$q(0,0)$. Namely,
\begin{equation*}\label{kappas-x}
  \kappa_1^0:=\frac{1}{2}\left(q(0,0)+\frac{1}{q(0,0)}\right),
\qquad
\kappa_2^0:=\frac{1}{2}\left(q(0,0)-\frac{1}{q(0,0)}\right). 
\end{equation*}

Using this notation, \eqref{s_via_til_s} reads as
\begin{equation}\label{tilde-a--a}
a(k)=(\kappa_2^0\tilde b(k)+\kappa_1^0\tilde a(k))\eul^{\ii k\nu(0)},\qquad
b(k)=(\kappa_1^0\tilde b(k)+\kappa_2^0\tilde a(k))\eul^{\ii k\nu(0)}.
\end{equation}

\begin{remark}\label{rem:simpl}
In the case $u_{xx}(0,0)=0$, we have $Q^{-1}(0,0)=I$ 
 and thus \eqref{Phi_0_inf_rel_} for $j=2,3$
simplifies to 
\begin{subequations}
\begin{align*}\label{Phi_0_inf_reduced_2}
\Phi_{\infty 2}(x,t,k)& =Q(x,t)\Phi_{0 2}(x,t,k)
\eul^{\ii k h_2(x,t))\sigma_3},\\
\Phi_{\infty 3}(x,t,k)& =Q(x,t)\Phi_{0 3}(x,t,k)
\eul^{\ii k h_3(x,t))\sigma_3}
\end{align*}
\end{subequations}
with 
\[
h_2(x,t) = -\int_x^\infty (\sqrt{m+1}-1)(\xi,t)d\xi + \int_0^\infty
(\sqrt{m+1}-1)(\xi,0)d\xi,
\quad 
h_3(x,t) = -\int_x^\infty (\sqrt{m+1}-1)(\xi,t)d\xi.
\]
Consequently, in this case
\[
s(k)=\tilde s(k)\eul^{-\ii k \nu(0)\sigma_3},
\]
or 
\[
a(k) = \tilde{a}(k)\eul^{\ii k \nu(0)},\quad
b(k) = \tilde{b}(k)\eul^{\ii k \nu(0)},
\]
which 
results in a significant simplification of the subsequent analysis
and constructions.
\end{remark}

\section{Spectral Mappings: Direct Problems}\label{sec:3}

Since the integral equations for $\Phi_{03}(x,0, k)$ and $\Phi_{\infty 3}(x,0, k)$ involve only the
 initial condition $m_0(x):=-u_{0xx}(x)$, it follows that the direct $x$-spectral mapping
 \begin{equation*}
     \{m_0(x)\} \longrightarrow \{a( k), b( k)\}
 \end{equation*}
can be defined via the solution of the integral equation \eqref{inteq_inf3} taken at $t=0$
or it can be defined by  
\begin{equation*}
     \{ m_0(x)\} \longrightarrow \{\tilde a( k),\tilde b( k)\},
 \end{equation*}
in terms of  the solution of the integral equation \eqref{inteq_03} taken at $t=0$.

Similarly, since the integral equations for $\Phi_{01}(0,t, k)$ and $\Phi_{\infty 1}(0,t, k)$ involve only the boundary values $v_j(t)$, it follows that the direct $t$-spectral mapping
 \begin{equation*}
     \{v_j(t)\}_0^2 \longrightarrow \{A( k), B( k)\}
 \end{equation*}
can be defined via the solution of the integral equation \eqref{inteq_inf1} taken at $x=0$.
Alternatively, 
 the direct $t$-spectral mapping
\begin{equation*}
     \{v_j(t)\}_0^2 \longrightarrow \{\tilde A( k),\tilde B( k)\}
 \end{equation*}
can be defined via the solution of the integral equation \eqref{inteq_01} taken at $x=0$.

\subsection{The direct $x$-spectral problem \texorpdfstring{$\{  m_0(x) \} \longrightarrow \{ a( k),b( k) \}$}{m_0(x) -> (a(mu), b(mu))}}

Assume that $m_0\in H^{2,1}(0,\infty)$ and 
$1+m_0(x)>0$ for all $x\ge0$.
Consider Equation \eqref{inteq_inf3} for $t=0$:
\begin{equation}\label{inteq_inf3_t_0}    
\Phi_{\infty 3}(x,0, k)=I-
\int_{x}^{+\infty}
	\eul^{\ii k\int_x^y\sqrt{m+1}(\xi,0)\dd \xi\hat\sigma_3}( U_\infty\Phi_{\infty3})(y,0, k) \dd y
\end{equation}
with $U_\infty$ given via \eqref{U-hat}  with $m$ replaced by $m_0(x)$ (now we do not require $ m_0(x)$ to be the initial value of a solution of the SW equation). Then the solution of \eqref{inteq_inf3_t_0} evaluated at $x=0$
and \eqref{s} determine the eigenfunctions and the corresponding spectral functions  ${a}( k)$ and $ {b}( k)$  associated with $ m_0(x)$.

We emphasize that the analytic properties of $\Phi_{\infty 3}$ are independent of the sign of $u(0,t)$. Indeed, since $\Phi_{\infty 3}$ is defined by a Volterra integral equation with integration from $x$ to $+\infty$, the boundedness of the corresponding exponential factors is determined only by the sign of $\Im k$. Hence the columns $\Phi_{\infty 3}^{(1)}$ and $\Phi_{\infty 3}^{(2)}$ have the same analyticity domains in the cases $u(0,t)\leq 0$ and $u(0,t)\geq 0$. Consequently, the direct $x$-spectral problem, and therefore the spectral functions $a(k)$ and $b(k)$ defined through $\Phi_{\infty 3}(0,0,k)$, are determined solely by the initial profile $m_0(x)$.

 Analyzing the Volterra integral equation \eqref{inteq_inf3_t_0} yields the following properties of $a( k)$ and $b( k)$:

\begin{enumerate}
    \item $ a( k)$ and $ b( k)$ are analytic in $\mathbb{C}^+$. Moreover, $ a( k)=1+O(\frac{1}{ k})$ and $ b( k)=O(\frac{1}{ k})$ as $ k\to\infty$. 

    \item The determinant relation 
        \begin{equation*}\label{detrel_ab}
           a( k) a^*(  k)-  b( k) b^*(  k)=1,  \qquad k\in \mathbb R
        \end{equation*}
holds (since the matrices $U_\infty$ and $V_\infty$ are traceless).

    \item The following symmetry relations hold:
    \begin{equation*}\label{sym_a_}
        \overline{a(-\bar  k)}=a( k),\qquad \overline{b(-\bar  k)}=b( k).
    \end{equation*}

\end{enumerate}

\begin{remark}
For the Cauchy problem, one can show that the spectral function $a(k)$ has no zeros in its domain of analyticity. In the IBV problem considered here, however, the same argument does not apply, and therefore the absence of zeros of $a(k)$ cannot be concluded in general.
\end{remark}

In order to specify the behavior of $ a( k)$ and $ b( k)$ as $ k\to 0$, it is convenient to consider  Equation \eqref{inteq_03} for $t=0$:
\begin{equation}\label{inteq_03_t_0}    
\Phi_{0 3}(x,0, k)=I-
\int_{x}^{+\infty}
	\eul^{\ii k(y-x)\hat\sigma_3}( U_0\Phi_{03})(y,0, k) \dd y
\end{equation}
with $U_0$ given via \eqref{U0-hat} with $ m$ replaced by $m_0(x)$.
Then $\tilde{a}(k)$ and $\tilde{b}(k)$ are defined by (cf. \eqref{tils})
\[
\begin{pmatrix}
    \tilde{b}(k) \\ \tilde{a}(k)
\end{pmatrix} = \Phi_{03}^{(2)}(0,0,k).
\]
and  have the following properties:

\begin{itemize}
   
    \item $\tilde a(k)$ and $\tilde b(k)$ are analytic in $\mathbb{C}^+$. Moreover, $\tilde a(k)=1+O(k)$ and $\tilde b(k)=O(k)$ as $k\to 0$  in $\mathbb{C}^+$.

    \item The determinant relation 
        \begin{equation*}\label{detrel_tilab}
          \tilde a(k)\tilde a^*( k)- \tilde b(k)\tilde b^*( k)=1, \qquad k\in \mathbb R
        \end{equation*}
holds (since the matrices $U_0$ and $V_0$ are traceless).
    \item The following symmetry relations hold: 
    \begin{equation*}\label{sym_tila_}
    \overline{\tilde a(-\bar k)}=\tilde a(k),\qquad
    \overline{\tilde b(-\bar k)}=\tilde b(k).
\end{equation*}

\end{itemize}

Taking into account \eqref{tilde-a--a}, it follows that
\begin{enumerate}[(4)]
    \item Behavior at $ k=0$: 
\begin{equation}\label{a_at_i}
       a(k)=\kappa_1^0+O(k), \qquad 
    b(k)=\kappa_2^0+O(k),\qquad k\to 0,\qquad k\in\mathbb{C}^+.
   \end{equation}
\end{enumerate}

\subsection{The direct $t$-spectral problem \texorpdfstring{$\{v_j(t) \}_{j=0}^2 \longrightarrow \{{A}( k), {B}( k) \}$}{vj(x) -> (A(mu), B(mu))}}

Assume that $v_j\in H^{1,0}(0,T)$, $ j=0,1,2$, and $1-v_2(t)>0$ for $0\le t\le T$.
Consider Equation \eqref{inteq_inf1} for $x=0$:
\begin{equation}\label{inteq_inf1_x_0}    
\Phi_{\infty 1}(0,t, k)=I-\int_{t}^{T}
	\eul^{\left(-\ii k\int_t^\tau (u\sqrt{m+1})(0,s)\dd s-\frac{\ii }{2k}(\tau-t)\right)\hat\sigma_3}( V_\infty\Phi_{\infty1})(0,\tau, k) \dd \tau
\end{equation}
with $V_\infty$ given by \eqref{hat-V} with $u$, $m$ and $um_x$ replaced by $v_0(t)$, $-v_2(t)$, and $v_{2t}(t)-2 v_1(t)(1-v_2(t))$, respectively.  
Evaluating \eqref{inteq_inf1_x_0} at $t=0$
and \eqref{S} allows determining the eigenfunctions and the corresponding spectral functions $A( k)$ and $B( k)$  associated with $\{v_j(t)\}_0^2$.

Here we do not require $\{v_j(t)\}_0^2$ to be the boundary values of a solution of the SW equation, but if $v_0(t)\equiv 0$ on an interval $[T_1,T_2]$, then we require that $v_{2t}(t)-2v_{1}(t)(1-v_2(t))\equiv 0$ on this interval, which is consistent with the SW equation with $v_0\equiv 0$.

The spectral functions  $A( k)$ and $ B( k)$ satisfy the following properties:

\begin{enumerate}
    \item $ A( k)$ and $ B( k)$ are analytic in $\mathbb{C}\setminus\{0\}$. Moreover, $ A( k)=1+O\left(\frac{1}{ k}\right)+O\left(\frac{\eul^{2\ii k \eta(T)}}{k}\right)$ and $ B( k)=O\left(\frac{1}{ k}\right)+O\left(\frac{\eul^{2\ii k \eta(T)}}{k}\right)$ as $ k\to\infty$. Here,
    \[
    \eta(t)=-\int_0^t v_0(\tau)\sqrt{-v_2(\tau)+1}\dd\tau.
    \]

    \item The determinant relation 
\begin{equation*}\label{detrel_AB}
           A( k) A^*(  k)-  B( k) B^*(  k)=1  
        \end{equation*}
        holds.
    \item The following symmetry relations hold:

    \begin{equation*}\label{sym_A_}
    \overline{A(-\bar  k)}=A( k),\qquad
    \overline{B(-\bar  k)}=B(k).
    \end{equation*}
    
\end{enumerate}

The large-$k$ asymptotic properties of $A(k)$ and $B(k)$ depend on the sign of $v_0(t)$. This dependence arises through the function
$
\eta(t) .
$
Indeed, the sign of $v_0(t)$ determines the sign of $\eta(t)$, and therefore determines the half-plane in which the exponential factor
$
\eul^{2\ii k\eta(T)}
$
is bounded as $k\to\infty$. Consequently, 
\begin{itemize}
    \item if $v_0(t)\leq0$, then $ A(k)=1+O\left(\frac{1}{k}\right)$ and $ B(k)=O\left(\frac{1}{k}\right)$ as $k\to\infty$ in $\mathbb{C}^+$;

    \item if $v_0(t)\geq0$, then  $ A(k)=1+O\left(\frac{1}{k}\right)$ and $ B(k)=O\left(\frac{1}{k}\right)$ as $k\to\infty$ in $\mathbb{C}^-$.
\end{itemize}

Alternatively, we can define another direct $t$-spectral 
mapping 
\[
\{v_j(t) \}_0^2 \longrightarrow \{ \tilde{A}(k), \tilde{B}(k) \}
\]
considering  Equation \eqref{inteq_01} for $x=0$:
\begin{equation}\label{inteq_01_x_0}    
\Phi_{0 1}(0,t,k)=I-
\int_{t}^{T}
	\eul^{-\frac{\ii}{2k}(\tau-t)\hat\sigma_3}( V_0\Phi_{01})(0,\tau,k ) \dd \tau,
\end{equation}
where $V_0$ is given by \eqref{hat-V0} with $u$, $u_x$ and $m$ replaced by $v_0$, $v_1$ and $-v_2$, respectively. 
Using \eqref{inteq_01_x_0}, one defines the eigenfunctions and the corresponding spectral functions associated with the boundary data $\{v_j(t)\}_{j=0}^2$. In particular, the functions $\tilde A(k)$ and $\tilde B(k)$ satisfy the following properties, which are independent of the sign of $v_0(t)$:

\begin{enumerate}
    \item $\tilde A(k)$ and $\tilde B(k)$ are analytic in $\mathbb{C}\setminus\{0\}$. Moreover, $\tilde A(k)=1+O(k)+O\left(k\eul^{-\frac{\ii T}{k}}
\right)$ and $\tilde B(k)=O(k)+O\left(k\eul^{-\frac{\ii T}{k}}\right)$ as $k\to 0$.

    \item The determinant relation
        \begin{equation*}\label{detrel_tilAB}
          \tilde A(k)\tilde A^*(k)- \tilde B(k)\tilde B^*(k)=1  
        \end{equation*}
holds.
    \item The following symmetry relations hold:
    \begin{equation*}\label{sym_tilA_}
    \overline{\tilde A(-\bar k)}=\tilde A(k),\qquad 
    \overline{\tilde B(-\bar k)}=\tilde B(k).
\end{equation*}

\end{enumerate}

Moreover, Equation \eqref{S_via_til_S} implies that the behavior as $k\to\infty$  is given by
\begin{itemize}
    \item if $v_0(t)\leq0$, then $\tilde A(k)=\kappa_1^T\kappa_1^0\eul^{-\ii k\eta(T)}+O\left(\frac{\eul^{-\ii k\eta(T)}}{k}\right)$ and $\tilde B(k)=-\kappa_1^T\kappa_2^0\eul^{-\ii k\eta(T)}+O\left(\frac{\eul^{-\ii k\eta(T)}}{k}\right)$ as $k\to\infty$ in $\mathbb C^+$;
\item if $v_0(t)\geq0$, then $\tilde A(k)=\kappa_1^T\kappa_1^0\eul^{-\ii k\eta(T)}+O\left(\frac{\eul^{-\ii k\eta(T)}}{k}\right)$ and $\tilde B(k)=-\kappa_1^T\kappa_2^0\eul^{-\ii k\eta(T)}+O\left(\frac{\eul^{-\ii k\eta(T)}}{k}\right)$ as $k\to\infty$ in $\mathbb C^-$.
\end{itemize}

\section{Compatibility of initial and boundary values}\label{sec:4}

In this section, we derive the relations among the spectral functions.

Assume first that $\Phi_{\infty 3}$ is the solution of the Lax pair equations \eqref{Lax-Q-form}, where $u(x,t)$ satisfies the SW equation. Evaluating this eigenfunction at $x=0$, $t=T$, and using \eqref{rel_inf}, we obtain
    \begin{equation*}
        \Phi_{\infty3}(0,T, k)=\eul^{-p(0,T, k)\sigma_3}S^{-1}( k)s( k)\eul^{p(0,T, k)\sigma_3}.
    \end{equation*}
In particular, the $(12)$ entry is given by
\begin{equation*}
    \Phi^{(12)}_{\infty3}(0,T, k)=(A( k)b( k)-B( k)a( k))\eul^{2\ii k \int_0^T(u\sqrt{m+1})(0,\tau)\dd\tau}\eul^{\frac{\ii }{k}T}.
\end{equation*}
On the other hand, the analytic properties of $\Phi_{\infty 3}$ imply
 $ \Phi^{(12)}_{\infty3}=O\left(\frac{1}{ k}\right)$ as $ k\to\infty$ in $\mathbb{C}^+$.
Therefore, the spectral functions satisfy the asymptotic relation
\begin{equation}
\label{relations_Phi_inf3}
       (A( k)b( k)-B( k)a( k))\eul^{2\ii k \int_0^T(u\sqrt{m+1})(0,\tau)\dd\tau}=O\left(\frac{1}{ k}\right),\quad k\to\infty, \quad k\in\mathbb{C}^+.
    \end{equation}

Similarly, assume that $\Phi_{0 3}$ is the solution of the Lax pair equations \eqref{Lax-2}, where $u(x,t)$ satisfies the SW equation. Evaluating this eigenfunction at $x=0$, $t=T$, and using \eqref{rel_0}, we obtain
    \begin{equation*}
        \Phi_{03}(0,T, k)=\eul^{-p_0(0,T, k)\sigma_3}\tilde S^{-1}( k)\tilde s( k)\eul^{p_0(0,T, k)\sigma_3}.
    \end{equation*}
Thus its $(12)$ entry is
\begin{equation*}
    \Phi^{(12)}_{03}(0,T, k)=(\tilde A( k)\tilde b( k)-\tilde B( k)\tilde a( k))\eul^{\frac{\ii }{k}T}.
\end{equation*}
Since the properties of $\Phi_{0 3}$ imply
 $ \Phi^{(12)}_{03}=O( k)$ as $ k\to 0$, we obtain the second asymptotic relation
\begin{equation}\label{relations_Phi_03_i}
 (\tilde A( k)\tilde b( k)-\tilde B( k)\tilde a( k))\eul^{\frac{\ii }{k}T}=O( k),\quad k\to 0.
\end{equation} 

The asymptotic formulas \eqref{relations_Phi_inf3} and \eqref{relations_Phi_03_i} are the so-called
\emph{Global Relations}. They express the compatibility of the initial and boundary values in spectral terms, namely through the spectral functions $\{a( k), b( k), A( k), B( k)\}$ and $\{\tilde a( k), \tilde b( k), \tilde A( k), \tilde B( k)\}$.

Notice that there is an important difference  between 
the cases $u(0,t)\geq 0$ and $u(0,t)\leq 0$. Namely, 
in the case $u(0,t)\geq 0$, relation \eqref{relations_Phi_inf3} follows directly from the properties of the spectral functions listed in Section~\ref{sec:3}, and thus it does not 
present any additional restriction on the spectral functions generated by the initial and boundary values.

\section{Spectral Mappings: Inverse  Problems}\label{sec:5}

We now describe the inverse spectral mappings. These mappings are expressed in terms of solutions of the associated Riemann–Hilbert problems, whose jump matrices are constructed from the corresponding spectral functions.

For later use, it is convenient to introduce the functions
\begin{equation}\label{psi_0}
    \Psi_{0j}(x,t,k)\coloneqq Q(x,t)\Phi_{0j}(x,t,k)\eul^{(p-p_0)(x,t,k)\sigma_3}, 
\end{equation}
which will simplify the form of jump conditions.
Furthermore, we will assume that $\epsilon>0$ is sufficiently small so that all 
eventual
zeros of $\tilde a(k)$, $ a(k)$, $\tilde A(k)$, and $ A(k)$ appearing in the denominators below lie outside the disks $\{|k|\leq\epsilon\}$.

\subsection{The Inverse $x$-Spectral Mapping}

 The inverse $x$-spectral mapping
 \[ \{ a( k),b( k) \}\longrightarrow \{m_0(x) \}\]
is formulated in terms of a RH problem whose jump matrix is determined by the spectral functions $a(k)$ and $b(k)$. This construction is independent of the sign of the boundary value $u(0,t)$. Indeed, it relies only on the relation between the eigenfunctions $\Phi_{\infty 2}(x,0,k)$ and $\Phi_{\infty 3}(x,0,k)$ in the direct $x$-spectral problem, and therefore depends solely on the initial profile $m_0(x)$.

\begin{remark}\label{rem:data_ab}
 Notice that $\kappa_1^0$, $\kappa_2^0$, and $\nu(0)$ (pushing the expansion of $\eul^{\ii k\nu(0)}$ in \eqref{tilde-a--a} a step further) can be obtained by expanding $a(k)$ and $b(k)$ as $k\to 0$. In particular, $\kappa_1^0=a(0)$, $\kappa_2^0=b(0)$, and $\nu(0)=-\ii\frac{a'(0)+b'(0)}{a(0)+b(0)}$
 Therefore, having $a(k)$ and $b(k)$, we can determine $\tilde a(k)$ and $\tilde b(k)$ via \eqref{s_via_til_s}.
\end{remark}

\begin{figure}[ht]
    \centering

\begin{tikzpicture}[scale=2]

\draw[red] (0,0) circle (1);
\draw[thick] (-2,0) -- (2,0);

\draw[thick, postaction={decorate},
      decoration={markings, mark=at position 0.5 with {\arrow[scale=2]{>}}}]
      (-2,0) -- (-1,0);

\draw[thick, postaction={decorate},
      decoration={markings, mark=at position 0.7 with {\arrow[scale=2]{<}}}]
      (-1,0) -- (0,0);  

\draw[thick, postaction={decorate},
      decoration={markings, mark=at position 0.5 with {\arrow[scale=2]{>}}}]
      (1,0) -- (2,0);

\draw[thick, red, postaction={decorate},
      decoration={markings, mark=at position 0.25 with {\arrow[scale=2]{<}}}] 
      (0,0) circle (1);

\draw[thick, red, postaction={decorate},
      decoration={markings, mark=at position 0.75 with {\arrow[scale=2]{>}}}] 
      (0,0) circle (1);

\node at (0,0.1) {\textcolor{red}{$0$}};
\fill[red] (0,0) circle(0.02);

\node at (1.1,0.1) {\textcolor{blue}{$\epsilon$}};
\fill[blue] (1,0) circle(0.02);

\end{tikzpicture}

    \caption{The oriented contour for the Riemann--Hilbert problems \textbf{RH$^{(x)}$}, \textbf{RH$^{(t)}$}, and \textbf{RH$^{(xt)}$}}
    \label{fig:contour_RH_x}
\end{figure}

Now, define a matrix-valued function $M^{(x)}(x,k)$ with $\det M^{(x)}\equiv1$ in the domains separated by the contour in Figure \ref{fig:contour_RH_x}:

\begin{equation}\label{M_(x)}
M^{(x)}(x,k)=\begin{cases}

\left( \frac{\Phi_{\infty 2}^{(1)}(x,0,k)}{a(k)},\Phi_{\infty 3}^{(2)}(x,0,k)\right),\quad k\in\mathbb{C}^+\cap\{|k| >\epsilon\},\\

\left( \frac{\Psi_{0 2}^{(1)}(x,0,k)}{\tilde a(k)},\Psi_{0 3}^{(2)}(x,0,k)\right)\eul^{-\ii k\nu(0)\sigma_3},\quad k\in\mathbb{C}^+\cap\{|k| <\epsilon\},\\

\left( \Phi_{\infty 3}^{(1)}(x,0,k),\frac{\Phi_{\infty 2}^{(2)}(x,0,k)}{a^*( k)}\right),\quad k\in\mathbb{C}^-\cap\{|k| >\epsilon\},\\

\left( \Psi_{0 3}^{(1)}(x,0,k),\frac{\Psi_{0 2}^{(2)}(x,0,k)}{\tilde a^*( k)}\right)\eul^{-\ii k\nu(0)\sigma_3},\quad k\in\mathbb{C}^-\cap\{|k| <\epsilon\},
\end{cases}
\end{equation}
where the functions $\Phi_{\infty2}(x,0,k)$, $\Phi_{\infty3}(x,0,k)$, $\Phi_{02}(x,0,k)$, $\Phi_{03}(x,0,k)$ are to be understood as defined by \eqref{inteq_inf2}, \eqref{inteq_inf3}, \eqref{inteq_02}, and \eqref{inteq_03}, respectively,  for $t=0$, where $u(x,0)$ in $U_\infty(x,0)$ and $U_0(x,0)$ is replaced by $u_0(x)$.

\begin{enumerate}
    \item Jump relation across $\mathbb{R}\cup\{|k|=\epsilon\}$:
    \begin{subequations}
        \label{jump_M_(x)}
        \begin{equation}
           M_-^{(x)}(x,k)=M_+^{(x)}(x,k)J(x,k),\quad k\in \mathbb{R}\cup\{|k|=\epsilon\}, 
        \end{equation}
        where
         \begin{equation}
          J(x,k)=\eul^{-p(x,0,k)\sigma_3} J_0^{(x)}(k)\eul^{p(x,0,k)\sigma_3}
        \end{equation} 
        with $p(x,0,k)=\ii k \int_0^x \sqrt{m_0(\xi)+1}d\xi= \ii k \int_0^x \sqrt{-u_{0xx}+1}d\xi$ and
\begin{equation}\label{J0}
   J_0^{(x)}(k)=\begin{cases}
       \begin{pmatrix}
       1&-r^*(k)\\r(k)&1-r(k)r^*(k)
   \end{pmatrix},\quad k\in\mathbb{R}\cap\{|k| >\epsilon\},\\
   \begin{pmatrix}
       1-\tilde r(k)\tilde r^*(k)&\tilde r^*(k)\eul^{2\ii k\nu(0)}\\-\tilde r(k)\eul^{-2\ii k\nu(0)}&1
   \end{pmatrix},\quad k\in\mathbb{R}\cap\{|k| <\epsilon\},\\
   \begin{pmatrix}
       1&0\\
       \frac{\kappa_2^0\eul^{-\ii k\nu(0)}}{a(k)\tilde a(k)}&1
   \end{pmatrix},\quad k\in\mathbb{C}^+\cap\{|k| =\epsilon\},\\
   \begin{pmatrix}
       1&-\frac{\kappa_2^0\eul^{\ii k\nu(0)}}{a^*(k)\tilde a^*(k)}\\
       0&1
   \end{pmatrix},\quad k\in\mathbb{C}^-\cap\{|k| =\epsilon\},
   \end{cases} 
\end{equation}
where $r(k)=\frac{b^*(k)}{a(k)}$ and $\tilde r(k)=\frac{\tilde b^*(k)}{\tilde a(k)}$. 
    \end{subequations}

\begin{remark}
    In the case $\kappa_2^0=0$, $J_0^{(x)}(k)$ simplifies to
    \[
    J_0^{(x)}(k)=
       \begin{pmatrix}
       1&-r^*(k)\\r(k)&1-r(k)r^*(k)
   \end{pmatrix},\quad k\in\mathbb{R}.
    \]
    Here $\mathbb{R}$ is oriented from left to right.
\end{remark}

\item Behavior at $\infty$:
\begin{subequations}\label{inf_M_(x)}
\begin{equation}\label{inf_M_exp(x)}
     M^{(x)}(x,k)=I+\frac{1}{8\ii k}M^{\infty}(x)+o\left(\frac{1}{k}\right),\quad k\to\infty,
\end{equation}
with
\begin{equation}\label{inf_M_inf(x)}
M^{\infty}(x)=\begin{pmatrix}
    \frac{1}{4}\int_x^\infty
\frac{m_{0x}^2}{(m_0+1)^{\frac{5}{2}}}(y)\dd y&\frac{m_{0x}}{(m_0+1)^{\frac{3}{2}}}\\
    -\frac{m_{0x}}{(m_0+1)^{\frac{3}{2}}}&-\frac{1}{4}\int_x^\infty
\frac{m_{0x}^2}{(m_0+1)^{\frac{5}{2}}}(y)\dd y\end{pmatrix}.
\end{equation}
\end{subequations}

\begin{proof}
Expanding $\Phi_{\infty 2}^{(1)}(x,0, k)$ and $\Phi_{\infty 3}^{(2)}(x,0, k)$ at $ k=\infty$ in $\mathbb{C}^+$ via the Neumann series using integration by parts yields \eqref{inf_M_(x)}.
\end{proof}

\begin{remark}
   Under assumption {\rm (i)}, the large-$k$ expansions below hold with a remainder $o(k^{-1})$. If, in addition,  $m_0(x)\in H^{3,1}(0,\infty)$,  then the integration-by-parts argument can be iterated once more, and the remainder improves to $O(k^{-2})$. 
\end{remark}

\item $\det M^{(x)}(x,k)\equiv1$

\item Symmetry properties:
\begin{equation}\label{sym-M_(x)}
M^{(x)}( k)=\sigma_1\overline{M^{(x)}(\bar k)}\sigma_1,\qquad M^{(x)}(k)=\sigma_1M^{(x)}(-k)\sigma_1
\end{equation}

\item Residue properties. Let $\{k_j\}$ be zeros of $a(k)$ in $\mathbb{C}^+$. Assume that they are all simple. Then

\begin{align}\label{res-M+_(x)}
\Res_{k_j}M^{(x)(1)}(x,k)&=\frac{e^{2p(x,0,k_j)}}{\dot a(k_j)b(k_j)}M^{(x)(2)}(x,k_j),\\
\label{res-M-_(x)}
\Res_{\bar k_j}M^{(x)(2)}(x,k)&=\frac{e^{-2p(x,0,\bar k_j)}}{\dot a^*(\bar k_j)b^*(\bar k_j)}M^{(x)(1)}(x,\bar k_j).
\end{align}
 Notice that $\dot a^*(\bar k_j)b^*(\bar k_j)=\overline{\dot a( k_j)b( k_j)}$.   

\item Behavior at $0$:
\begin{subequations}\label{i_beh-M_(x)}
  \begin{equation}\label{i_beh-M_(x)_exp}
M^{(x)}(x,k)=Q_0(x)+O(k), \quad k\to 0, 
\end{equation}
where 
\begin{equation}
    \label{i_beh-M_(x)_coeff}
    Q_0(x)=\frac{1}{2}\begin{pmatrix}
q_0+\frac{1}{q_0} & q_0-\frac{1}{q_0}\\
q_0-\frac{1}{q_0} & q_0+\frac{1}{q_0}
\end{pmatrix}, \quad 
q_0(x)= (m_0(x)+1)^{1/4}.
\end{equation}
\end{subequations}
\begin{proof}
Let $u_0$ denote the solution of  $-u_{0xx}=m_0$ satisfying $u_0(x),u_{0x}(x)\to0$ as $x\to\infty$. Thus, $u_{0x}(x)=\int_x^\infty m_0(y)\dd y$,  $u_0(x)=-\int_x^\infty (y-x)m_0(y)\dd y$. 

    Expanding $\Phi_{02}^{(1)}(x,0, k)$ at $ k=0$ via the Neumann series and using integration by parts yields
\[
\Phi_{02}^{(1)}(x,0, k)=\begin{pmatrix}
    1\\0
\end{pmatrix}+\frac{\ii k}{2}\begin{pmatrix}u_{0x}(x)-u_{0x}(0)
    \\-u_{0x}(x)+u_{0x}(0)
\end{pmatrix}+ k^2\begin{pmatrix} 
    0\\u_0(x)-u_0(0)-xu_{0x}(0)
\end{pmatrix}+O(k^3).
\]
Similarly, expanding $\Phi_{03}^{(2)}(x,0, k)$ at $ k=0$, we obtain
\[
\Phi_{03}^{(2)}(x,0, k)=\begin{pmatrix}
    0\\1
\end{pmatrix}+\frac{\ii k}{2}\begin{pmatrix}u_{0x}(x)
    \\-u_{0x}(x)
\end{pmatrix}+ k^2\begin{pmatrix} 
    u_0(x)\\0
\end{pmatrix}+o(k^2),
\]
and, in particular,
\begin{equation}\label{tila_at_0_}
    \tilde a(k)=1-\frac{\ii k}{2}u_{0x}(0)+O(k^3)
\end{equation}
\end{proof}
    
\end{enumerate}

The properties of $M^{(x)}$ stated above (that follow from the analysis of the direct problem) can be interpreted as a 
family of RH factorization problems parametrized by $(x,t)$.
However, the construction of the jump matrix involves 
\[
p(x,0, k) =\ii k \int_0^x \sqrt{m_0(\xi)+1}d\xi
\]
which, in turn, involves $ m_0(x)$.
This suggests the introduction of a new variable
\begin{equation}\label{y_(x)}
y(x)=\int_0^x \sqrt{m_0(\xi)+1}d\xi,
\end{equation}
 which makes the RH problem  explicitly dependent
on $y$ as a parameter:

\textbf{The Riemann--Hilbert problem RH$^{x}$:} Given $a( k)$, $b(k)$ for $ k\in\mathbb{C}_+$ and a set $\{ k_j\}_1^N$ with $ k_j\in\mathbb{C}_+\setminus\{|k|<\epsilon\}$, find a  piecewise meromorphic $2\times 2$ matrix-valued function $\hat M^{(x)}(y, k)$ that satisfies the following conditions:

\begin{enumerate}
    \item Jump condition across $\mathbb{R}\cup\{|k|=\epsilon\}$
    \begin{subequations}
        \label{jump_hatM_(x)}
        \begin{equation}
           \hat M_-^{(x)}(y, k)=\hat M_+^{(x)}(y, k)\hat J(y, k),\quad k\in\mathbb{R}\cup\{|k|=\epsilon\} 
        \end{equation}
        where
         \begin{equation}
          \hat J(y, k)=\eul^{-\hat p(y,0, k)\sigma_3} J_0( k)\eul^{\hat p(y,0, k)\sigma_3}
        \end{equation} 
        with $\hat p(y,0, k)= \ii k y$ and $J_0( k)$ defined in \eqref{J0}.
    \end{subequations}

\item Behavior at $\infty$:
\begin{equation}\label{inf_hatM_(x)}
     \hat M^{(x)}(y, k)=I+O(\frac{1}{ k}),\quad k\to\infty.
\end{equation}

\item Residue conditions:  for $j=1,\dots,N$,
\begin{subequations}
    \label{res-hatM_(x)}
\begin{align}\label{res-hatM+_(x)}
\Res_{ k_j}\hat M^{(x)(1)}(y, k)&=\frac{e^{2\hat p(y,0, k_j)}}{\dot a( k_j)b( k_j)}\hat M^{(x)(2)}(y, k_j),\\
\label{res-hatM-_(x)}
\Res_{\bar k_j}\hat M^{(x)(2)}(y, k)&=\frac{e^{-2\hat p(y,0,\bar k_j)}}{\dot a^*(\bar k_j)b^*(\bar k_j)}\hat M^{(x)(1)}(y,\bar k_j).
\end{align}   
\end{subequations}
\end{enumerate}

\begin{proposition}
    \begin{enumerate}
        \item $\det \hat M^{(x)}\equiv1$
    
        \item If a solution of the RH problem \eqref{jump_hatM_(x)}--\eqref{res-hatM_(x)} exists, it is unique.

    \end{enumerate}
\end{proposition}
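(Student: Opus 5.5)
We follow the standard Liouville-type argument for RH problems.

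\emph{Step 1: the determinant.} We first check that $\det \hat J(y,k)\equiv 1$ on every part of the contour. On $\mathbb{R}\cap\{|k|>\epsilon\}$ one has $\det J_0=(1-rr^*)+rr^*=1$. On $\mathbb{R}\cap\{|k|<\epsilon\}$ the same computation holds with $\tilde r$. On the two arcs of $\{|k|=\epsilon\}$ the jump matrices are triangular with unit diagonal. Conjugation by $\eul^{\pm\hat p\sigma_3}$ does not change the determinant.

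Hence $d(k):=\det\hat M^{(x)}(y,k)$ has no jump across $\mathbb{R}\cup\{|k|=\epsilon\}$. By the usual continuity of boundary values (we assume $L^2$ boundary values, and the self-intersection points $k=\pm\epsilon$ and $k=0$ are removable by a Morera argument), $d$ is holomorphic away from the points $k_j,\bar k_j$.

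\emph{Step 2: the poles of $d$ are removable.} Near $k_j$ only the first column of $\hat M^{(x)}$ has a pole, and its residue is proportional to the second column evaluated at $k_j$. Thus
\[
\hat M^{(x)(1)}=\frac{c_j\hat M^{(x)(2)}(k_j)}{k-k_j}+\text{hol}, \qquad \hat M^{(x)(2)}=\hat M^{(x)(2)}(k_j)+O(k-k_j).
\]
In $d=\det(\hat M^{(x)(1)},\hat M^{(x)(2)})$ the singular part is $c_j\det\bigl(\hat M^{(x)(2)}(k_j),\hat M^{(x)(2)}(k)\bigr)/(k-k_j)$, which is bounded as $k\to k_j$. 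The same argument works at $\bar k_j$ with the roles of the columns swapped. So $d$ is entire. Since $d\to1$ as $k\to\infty$, Liouville's theorem gives $d\equiv1$.

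\emph{Step 3: uniqueness.} Let $\hat M_1$ and $\hat M_2$ be two solutions, and set $N:=\hat M_1\hat M_2^{-1}$. By Step 1, $\hat M_2^{-1}$ is the adjugate of $\hat M_2$, so it is well defined. On the contour,
\[
N_-=\hat M_{1+}\hat J\hat J^{-1}\hat M_{2+}^{-1}=N_+,
\]
so $N$ has no jumps. It remains to show that $N$ is regular at $k_j$ (and similarly at $\bar k_j$).

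Write $\hat M_i=\hat M_i^{\rm reg}(k)\,\bigl(I+\tfrac{c_j}{k-k_j}E_{21}\bigr)$. This is exactly the residue condition: adding $c_j/(k-k_j)$ times the second column to the first column. Here $\hat M_i^{\rm reg}$ is holomorphic near $k_j$, and $c_j=e^{2\ii k_jy}/(\dot a(k_j)b(k_j))$ is the same for both solutions. Then
\[
N=\hat M_1^{\rm reg}\bigl(\hat M_2^{\rm reg}\bigr)^{-1},
\]
and $\det\hat M_2^{\rm reg}=1$, so $N$ is holomorphic at $k_j$. At $\bar k_j$ we use $E_{12}$ instead.

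Therefore $N$ is entire and tends to $I$ at infinity, so $N\equiv I$ by Liouville. This gives $\hat M_1=\hat M_2$.

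\emph{Expected main obstacle.} The only delicate point is the regularity at the contour points. This includes $k=0$, where the contour pieces of $\mathbb{R}$ inside the disk meet, and $k=\pm\epsilon$, where the circle crosses $\mathbb{R}$. We also need the $k_j$ to avoid the contour, which holds since $k_j\notin\{|k|\le\epsilon\}$ and $k_j\in\mathbb{C}^+$. These points are handled by requiring that the solution be bounded near these points (or have $L^2$ boundary values). Then the isolated singularities of $d$ and $N$ there are removable.
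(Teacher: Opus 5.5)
Your proof is correct and takes the same Liouville-type approach the paper uses for the general RH problem in Section~\ref{sec:SWinyt}. Unimodular jumps plus the column structure of the residue conditions make $\det\hat M^{(x)}$ entire with limit $1$, and the ratio $\hat M_1\hat M_2^{-1}$ has neither jumps nor poles, so it equals $I$. Your factorization $\hat M=\hat M^{\mathrm{reg}}\bigl(I+\tfrac{c_j}{k-k_j}E_{21}\bigr)$ at the poles and your explicit boundedness assumption near the contour's nodes make precise details the paper leaves implicit.
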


The uniqueness of the solution of the RH problem \textbf{RH$^{(x)}$} 
and properties of $M^{(x)}(x, k)$ justify the following procedure for the inverse mapping
\[
    \{a( k),\, b( k)\} \longrightarrow \{ m_0(x)\}
\]
for the $x$-problem:

\begin{enumerate}[Step 1.]
    \item Given $a( k)$ and $b( k)$, compute $\kappa_i^0$, $i=1,2$, $\tilde a(k)$ and $\tilde b(k)$ via \eqref{a_at_i} and \eqref{tilde-a--a}, and construct the RH problem \textbf{RH$^{(x)}$};

    \item Solve the constructed RH problem \textbf{RH$^{(x)}$};

    \item Evaluate the solution $ \hat M^{(x)}(y, k)$ of this RH problem  at $k=0$:
    \begin{equation*}
       \hat M^{(x)}(y, k)=\begin{pmatrix}
           \hat \alpha(y)&\hat \beta(y)\\
           \hat \beta(y)& \hat \alpha(y)
       \end{pmatrix}+O(k).
    \end{equation*}

\item

Then $q_0(x)=(m_0(x)+1)^{1/4}$ is given implicitly as follows
\begin{subequations}
    \begin{align}
\label{m_0_via_RH_x_m_2}
&\hat{m}_0(y)=\left(\hat \alpha(y)+\hat\beta(y)\right)^4-1,\\\label{M_(x)_x_y}
&x(y)=\int_0^y \tfrac{1}{\sqrt{\hat m_0(\xi)+1}}\dd \xi.
\end{align}
\end{subequations}

\end{enumerate}

Finally, having $m_0(x)$, we can reconstruct $u_0(x)$ by:
\begin{equation}\label{Green_m0}
    \begin{aligned}
   u_0(x)&=-\int_x^\infty(\xi-x)m_0(\xi)d\xi.
\end{aligned}
\end{equation}

\subsection{The Inverse $t$-Spectral Mapping}\label{sec:5.2}

The inverse $t$-spectral mapping
 \[ \{A(k), B(k), \tilde A( k), \tilde B( k) \}\longrightarrow \{ v_0(t), v_1(t),v_2(t)  \}\]
 is described in terms of the solution of the RH problem, whose jump matrix is determined by $ A$, $ B$, $\tilde A$, and $\tilde B$.
The construction of the RH problem follows from the relation
between the eigenfunctions $\Phi_{\infty1}(0,t, k)$, $\Phi_{\infty2}(0,t, k)$, $\Phi_{01}(0,t, k)$ and $\Phi_{02}(0,t, k)$ in the direct problem setting.

Throughout this section, we assume  $v_j\in H^{2,0}(0,T)$, $j=0,1,2$, and $1-v_2(t)>0$ for $0\leq t\leq T$. We also assume that the boundary data satisfy the
compatibility relation
\begin{equation}
    \label{inner_comp}
    v_0(t)\left(1-\frac{v_2(t)}{2}\right)
-\frac{v_1^2(t)}{4}
=
\frac{v_{1t}(t)}{2},
\qquad 0\leq t\leq T.
\end{equation}

\begin{remark}
    For boundary data that are traces of a classical solution, the
compatibility relation follows directly
from the SW equation. Indeed, integrating \eqref{SW} once with respect to
$x$ and using the decay at infinity gives
\[
\frac{u_{xt}}{2}
=
u\left(1+\frac{m}{2}\right)-\frac{u_x^2}{4}.
\]
Evaluating this identity at $x=0$ and using \eqref{boundary},
we obtain \eqref{inner_comp}
\end{remark}

\subsubsection[The Riemann--Hilbert problem formalism in the case $u(0,t)\le 0$]{\texorpdfstring{${u}(0,t)\le 0$}{u-negative}}
\label{sec:leq_inv}

We introduce the matrix-valued function $M^{(t)}(t,k)$, with $\det M^{(t)}\equiv1$, defined in
the domains separated by the contour in Figure \ref{fig:contour_RH_x}:

\begin{equation}\label{M_(t)}
M^{(t)}(t, k)=\begin{cases}

\left( \Phi_{\infty 2}^{(1)}(0,t, k),\frac{\Phi_{\infty 1}^{(2)}(0,t, k)}{A( k)}\right),\quad  k\in\mathbb{C}^+\cap\{| k| >\epsilon\},\\

\left(\Psi_{0 2}^{(1)}(0,t, k),\frac{\Psi_{0 1}^{(2)}(0,t, k)}{\tilde A( k)}\right),\quad  k\in\mathbb{C}^+\cap\{| k| <\epsilon\},\\

\left(\frac{\Phi_{\infty 1}^{(1)}(0,t, k)}{A^*( k)},\Phi_{\infty 2}^{(2)}(0,t, k)\right),\quad  k\in\mathbb{C}^-\cap\{| k| >\epsilon\}\\

\left(\frac{\Psi_{0 1}^{(1)}(0,t, k)}{\tilde A^*( k)},\Psi_{0 2}^{(2)}(0,t, k)\right),\quad  k\in\mathbb{C}^-\cap\{| k| <\epsilon\},
\end{cases}
\end{equation}
where the functions $\Psi_{0j}$
are defined by \eqref{psi_0}.

Then the function $M^{(t)}(t, k)$ has the following properties:

\begin{enumerate}
    \item Jump relation across $\mathbb{R}\cup\{|k|=\epsilon\}$
    \begin{subequations}
        \label{jump_M_(t)}
        \begin{equation}
           M_-^{(t)}(t, k)=M_+^{(t)}(t, k)J^{(t)}(t, k),\quad k\in \mathbb{R}\cup\{|k|=\epsilon\},
        \end{equation}
        where
         \begin{equation}
          J^{(t)}(t, k)=\eul^{-p(0,t, k)\sigma_3} J^{(t)}_0( k)\eul^{p(0,t, k)\sigma_3}        \end{equation} 
        with
\[p(0,t, k)= \ii k\left(\eta(t)-\frac{t}{2 k^2}\right),\]
 where $\eta(t)=-\int_0^tv_0(\tau)\sqrt{-v_2(\tau)+1}\dd\tau$, and
\begin{equation}\label{J_0_jump_M_(t)}
   J^{(t)}_0( k)=\begin{cases}
       \begin{pmatrix}
          1-R( k)R^*( k)&-R( k)\\
          R^*( k)&1
       \end{pmatrix},\quad  k\in\mathbb{R}\cap\{| k|>\epsilon\},\\
       \begin{pmatrix}
          1&\tilde R( k)\\
        -\tilde R^*( k)&1-\tilde R( k)\tilde R^*( k)
       \end{pmatrix},\quad  k\in\mathbb{R}\cap\{| k|<\epsilon\},\\
       
       \begin{pmatrix}
        \kappa_1^0-R(k)\kappa_2^0&\tilde R(k)\kappa_1^0-\tilde R(k)R(k)\kappa_2^0+\kappa_2^0-R(k)\kappa_1^0\\
          \kappa_2^0 & \kappa_1^0+\tilde R(k)\kappa_2^0
       \end{pmatrix},\quad  k\in\mathbb{C}^+\cap\{| k|=\epsilon\},\\

              \begin{pmatrix}
          \kappa_1^0-R^*(k)\kappa_2^0&-\kappa_2^0\\
         - \tilde R^*(k)\kappa_1^0+\tilde R^*(k)R^*(k)\kappa_2^0-\kappa_2^0+R^*(k)\kappa_1^0&\kappa_1^0+\tilde R^*(k)\kappa_2^0
       \end{pmatrix},\quad  k\in\mathbb{C}^-\cap\{| k|=\epsilon\}
       
   \end{cases}
\end{equation}
with $R( k):=\frac{B( k)}{A( k)}$ and $\tilde R( k):=\frac{\tilde B( k)}{\tilde A( k)}$.
    \end{subequations}

    \begin{remark}
         In the  case $Q(0,0)=I$ (i.e., with $\kappa_1^0=1$ and $\kappa_2^0=0$),
    the jump matrix $J_0^{(t)}(k)$ simplifies to
\begin{equation}\label{J_0_jump_M_(t)_simpl}
   J^{(t)}_0( k)=\begin{cases}
       \begin{pmatrix}
          1-R( k)R^*( k)&-R( k)\\
          R^*( k)&1
       \end{pmatrix},\quad  k\in\mathbb{R}\cap\{| k|>\epsilon\},\\
       \begin{pmatrix}
          1&\tilde R( k)\\
        -\tilde R^*( k)&1-\tilde R( k)\tilde R^*( k)
       \end{pmatrix},\quad  k\in\mathbb{R}\cap\{| k|<\epsilon\},\\
       
       \begin{pmatrix}
        1 &\tilde R(k)-R(k)\\
          0 & 1
       \end{pmatrix},\quad  k\in\mathbb{C}^+\cap\{| k|=\epsilon\},\\

              \begin{pmatrix}
          1&0\\
         - \tilde R^*(k)+R^*(k)&1
       \end{pmatrix},\quad  k\in\mathbb{C}^-\cap\{| k|=\epsilon\}.
       
   \end{cases}
\end{equation}
    \end{remark}

\item Behavior at $\infty$:
\begin{equation}\label{inf_M_(t)}
     M^{(t)}(t, k)=I+O\left(\frac{1}{ k}\right),\quad k\to\infty.
\end{equation}

\item $\det M^{(t)}(t, k)\equiv1$.

\item Symmetry properties:
\begin{equation}\label{sym-M_(t)}
M^{(t)}( k)=\sigma_1\overline{M^{(t)}(\bar k)}\sigma_1,\qquad M^{(t)}(k)=\sigma_1M^{(t)}(-k)\sigma_1.
\end{equation}

\item Residue properties. 
Let $\{\nu_j\}$ be zeros of $A( k)$  in $\mathbb{C}^+$.
Assume that they are all simple. Then

\begin{align}\label{res-M+_(t)}
\Res_{\nu_j}M^{(t)(2)}(t, k)&=\frac{B(\nu_j)e^{-2p(0,t,\nu_j)}}{\dot A(\nu_j)}M^{(t)(1)}(t,\nu_j),\\
\label{res-M-_(t)}
\Res_{\bar\nu_j}M^{(t)(1)}(t, k)&=\frac{B^*(\bar\nu_j)e^{2p(0,t,\bar\nu_j)}}{\dot A^*(\bar\nu_j)}M^{(t)(2)}(t,\bar\nu_j).
\end{align}

\item Behavior at $0$. 
\begin{subequations}\label{i_beh-M_(t)}
\begin{equation}
M^{(t)}(t, k)=Q(0,t)\left(I+\ii kM_1^{(t)}(t)+k^2M_2^{(t)}(t)+o(k^2)\right) \eul^{\ii k \eta(t)\sigma_3}, 
\end{equation}
where
\begin{equation}
    M_1^{(t)}=\begin{pmatrix}
    \int_0^t\frac{v_{1t}}{2}(\tau)\dd\tau&\frac{v_1}{2}\\
    -\frac{v_1}{2}&-\int_0^t\frac{v_{1t}}{2}(\tau)\dd\tau
\end{pmatrix}=\frac{1}{2}\begin{pmatrix}
    v_1(t)-v_1(0)&v_1(t)\\
    -v_1(t)&-v_1(t)+v_1(0)
\end{pmatrix}
\end{equation}

\begin{align}
    M_2^{(t)}&=\begin{pmatrix}
    *&\frac{v_{1t}}{2}+\frac{v_0v_2}{2}+\frac{v_1}{2}\int_0^t\frac{v_{1t}}{2}(\tau)\dd\tau\\
    \frac{v_{1t}}{2}+\frac{v_0v_2 m}{2}+\frac{v_1}{2}\int_0^t\frac{v_{1t}}{2}(\tau)\dd\tau&*
\end{pmatrix}=\\
&=\begin{pmatrix}
    *&v_0(t)-\frac{v_1(t)v_1(0)}{4}\\
   v_0(t)-\frac{v_1(t)v_1(0)}{4}&*
\end{pmatrix}.
\end{align}
\end{subequations}

\begin{proof}
    Expanding $\Phi_{02}^{(1)}(0,t, k)$ at $ k=0$ via the Neumann series and using integration by parts yields
  \begin{equation*}
      \begin{aligned}
          \Phi_{02}^{(1)}(0,t, k)&=\begin{pmatrix}
    1\\0
\end{pmatrix}+\ii k\begin{pmatrix}\int_0^t\left(u(1+\frac{m}{2})-\frac{u_x^2}{4}\right)(0,\tau)\dd\tau
    \\-\frac{u_x}{2}
\end{pmatrix}\\
&+ k^2\begin{pmatrix} 
    *\\ \frac{u_{xt}}{2}-\frac{u m}{2}+\frac{u_x}{2}\int_0^t\left(u(1+\frac{m}{2})-\frac{u_x^2}{4}\right)(0,\tau)\dd\tau
\end{pmatrix}+O(k^3).
      \end{aligned}
        \end{equation*}
Similarly, expanding $\Phi_{01}^{(2)}(0, t,k)$ at $ k=0$, we obtain
\begin{equation*}
    \begin{aligned}
        \Phi_{01}^{(2)}(0,t, k)&=\begin{pmatrix}
    0\\1
\end{pmatrix}+\ii k\begin{pmatrix} \frac{u_x}{2}
    \\\int_t^T\left(u(1+\frac{m}{2})-\frac{u_x^2}{4}\right)(0,\tau)\dd\tau
\end{pmatrix}\\&+ k^2 \begin{pmatrix} 
    \frac{u_{xt}}{2}-\frac{u m}{2}-\frac{u_x}{2}\int_t^T\left(u(1+\frac{m}{2})-\frac{u_x^2}{4}\right)(0,\tau)\dd\tau \\ *
\end{pmatrix}+O(k^3),
    \end{aligned}
\end{equation*}
and, in particular,
\[
\tilde A(k)=1+\ii k \int_0^T\left(u(1+\frac{m}{2})-\frac{u_x^2}{4}\right)(0,\tau)\dd\tau+O(k^2).
\]
Finally, \eqref{SW} implies 
\[
u\left(1+\frac{m}{2}\right)-\frac{u_x^2}{4}=\frac{u_{tx}}{2}.
\]
Therefore, using $u(0,t)=v_0(t)$,
$u_x(0,t)=v_1(t)$, $m(0,t)=-v_2(t)$ together with \eqref{inner_comp}, we obtain \eqref{i_beh-M_(t)}.
\end{proof}

\begin{remark}
 If, in addition,
$v_j\in H^{3,0}(0,T)$, $j=0,1,2$,
then one further integration by parts gives the sharper remainder
$O(k^3)$.
\end{remark}

\end{enumerate}

As in the $x$-spectral mapping, the properties of $M^{(t)}(t, k)$ stated above 
can be seen as those specifying a 
family of RH factorization problems.
However, we again face the same problem: the construction of the jump matrix involves $p(0,t, k)$
which in turn involves 
\begin{equation}\label{y_(t)}
\eta(t)=-\int_0^tv_0(\tau)\sqrt{-v_{2}(\tau)+1}\dd\tau. 
\end{equation}
Consequently, this suggests the introduction of a new parameter for the RH problem,
$z$, in terms of which $\hat J^{(t)}$ can be explicitly written.
Namely, introducing
\[\hat p(z,t, k)=\ii k\left(z-\frac{t}{2k^2}\right),\] 
we have that 
\[
p(0,t,k)=\hat p(\eta(t),t,k).
\]

Consider the following family of RH problems parametrized by $z$ and $t$.

\textbf{The Riemann--Hilbert problem RH$^{(t)}$:} Given $A(k)$, $B(k)$, $\tilde A( k)$, and $\tilde B( k)$ for $ k\in\mathbb{C}^+$, and a set $\{\nu_j\}_1^K\subset \mathbb{C}^+$, find a piecewise meromorphic $2\times 2$  matrix-valued function $\hat M^{(t)}(z,t, k)$ that satisfies the following conditions:

\begin{enumerate}
    \item Jump relation across $\mathbb{R}\cup\{|k|=\epsilon\}$
    \begin{subequations}
        \label{jump_hatM_(t)}
        \begin{equation}
           \hat M_-^{(t)}(z,t, k)=\hat M_+^{(t)}(z,t, k)\hat J^{(t)}(z,t, k),\quad k\in\mathbb{R}\cup\{|k|=\epsilon\},
        \end{equation}
        where
         \begin{equation}
          \hat J^{(t)}(z,t, k)=\eul^{-\ii k\left(z-\frac{t}{2k^2}\right)\sigma_3}J^{(t)}_0( k)\eul^{\ii k\left(z-\frac{t}{2k^2}\right)\sigma_3}
        \end{equation} 
        with $J^{(t)}_0( k)$ defined in \eqref{J_0_jump_M_(t)}.
        \end{subequations}

\item Behavior at $\infty$:
\begin{equation}\label{inf_hatM_(t)}
     \hat M^{(t)}(z,t, k)=I+O(\frac{1}{ k}),\quad k\to\infty.
\end{equation}

\item Residue conditions: for $j=1,\dots,K$,
\begin{subequations}
  \label{res-hatM_(t)}
\begin{align}\label{res-hatM+_(t)}
\Res_{\nu_j}\hat M^{(t)(2)}(z,t, k)&=\frac{B(\nu_j)e^{-2\hat p(z,t,\nu_j)}}{\dot A(\nu_j)}\hat M^{(t)(1)}(z,t,\nu_j),\\
\label{res-hatM-_(t)}
\Res_{\bar\nu_j}\hat M^{(t)(1)}(z,t, k)&=\frac{B^*(\bar\nu_j)e^{2\hat p(z,t,\bar\nu_j)}}{\dot A^*(\bar\nu_j)}\hat M^{(t)(2)}(z,t,\bar\nu_j).
\end{align}
\end{subequations}

\end{enumerate}

\begin{proposition}
    \begin{enumerate}
        \item $\det \hat M^{(t)}\equiv1$.
    
        \item If the solution of the RH problem \eqref{jump_hatM_(t)}--\eqref{res-hatM_(t)} exists, it is unique.
 
    \end{enumerate}
\end{proposition}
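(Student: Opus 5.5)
We argue along the standard lines for matrix RH problems with unimodular jumps, in the same way as for \textbf{RH$^{(x)}$}.

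\emph{Step 1: the determinant.} First check that $\det J_0^{(t)}(k)=1$ on every part of the contour.

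On $\mathbb{R}\cap\{|k|>\epsilon\}$ this is immediate from the form of the matrix. The same holds on $\mathbb{R}\cap\{|k|<\epsilon\}$.

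On the two arcs of $\{|k|=\epsilon\}$, expand the determinant directly. In the upper arc the products $\tilde R R\kappa_2^0\kappa_1^0$ and $\tilde R R(\kappa_2^0)^2$ cancel, and everything collapses to $(\kappa_1^0)^2-(\kappa_2^0)^2$. This equals $1$ by the definition of $\kappa_{1,2}^0$ through $q(0,0)$, since $\det Q(0,0)=1$. The lower arc is handled identically, or follows from the symmetry $k\mapsto\bar k$ of the jump.

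Conjugation by $\eul^{\pm\hat p\sigma_3}$ preserves determinants. Hence $d(k):=\det\hat M^{(t)}(z,t,k)$ has no jump across $\mathbb{R}\cup\{|k|=\epsilon\}$.

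Next we check that $d$ has no singularities at $\nu_j$, $\bar\nu_j$. The residue conditions say that the singular column is a multiple of the regular column at the pole. Writing, near $\nu_j$,
\[
\hat M^{(t)(2)}=\frac{c_j\hat M^{(t)(1)}(\nu_j)}{k-\nu_j}+\text{regular},
\]
the polar part of $d$ is $c_j\det\bigl(\hat M^{(t)(1)}(\nu_j),\hat M^{(t)(1)}(\nu_j)\bigr)/(k-\nu_j)=0$. So these singularities are removable, and likewise at $\bar\nu_j$.

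The only remaining point needing care is $k=0$. It lies on the contour only as a point of $\mathbb{R}$, but the exponentials $\eul^{\pm\ii t/(2k)}$ are there. Since $0$ sits inside the disk, we must assume that the solution class includes boundedness of $\hat M^{(t)}$ near $k=0$, as is implicit for $M^{(t)}$ from the behavior at $0$. Then $d$ is bounded near $0$, and the singularity is removable.

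Thus $d$ is entire, and by \eqref{inf_hatM_(t)} it tends to $1$ at infinity. Liouville's theorem gives $d\equiv1$.

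\emph{Step 2: uniqueness.} Let $M_1,M_2$ be two solutions and set $N=M_1M_2^{-1}$. This makes sense because $\det M_2\equiv1$, and $M_2^{-1}$ is the adjugate, hence again piecewise meromorphic.

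On the contour, $N_-=M_{1+}JJ^{-1}M_{2+}^{-1}=N_+$, so $N$ has no jumps, and $N$ is bounded near $k=0$ by the standing assumption.

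At $\nu_j$ we show regularity of $N$ in the usual way. Write $M_i=\hat M_i\,(I+\text{nilpotent pole})$, that is,
\[
M_i(k)=\tilde M_i(k)\begin{pmatrix}1&\frac{c_j}{k-\nu_j}\\0&1\end{pmatrix},
\]
with $\tilde M_i$ analytic near $\nu_j$. This factorization is exactly what the residue condition \eqref{res-hatM+_(t)} encodes. The unipotent factors cancel in $M_1M_2^{-1}$, so $N$ is analytic at $\nu_j$. The same works at $\bar\nu_j$ with lower-triangular factors.

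Hence $N$ is entire and $N\to I$ at infinity, so $N\equiv I$ by Liouville, i.e., $M_1=M_2$.

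\emph{Main obstacle.} The main delicate point is the behavior at $k=0$. The essential singularity of the jump there means that uniqueness genuinely requires specifying the admissible class, namely $\hat M^{(t)}$ bounded in a neighborhood of $0$ (away from the contour, up to it with continuous boundary values). We would state this explicitly as part of the solution class, justified by item (6) for $M^{(t)}$.

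We would also note that the prescribed poles $\nu_j$ are taken outside $\{|k|\le\epsilon\}$, consistent with the choice of $\epsilon$. Otherwise the residue analysis would have to be combined with the circle jumps.
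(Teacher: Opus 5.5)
Your proposal is correct and follows the paper's own route. The paper argues the same way for the general RH problem of Section~\ref{sec:SWinyt}: $\det\hat M^{(t)}$ has no jump and only removable singularities at $\nu_j,\bar\nu_j$, so Liouville applies, and Liouville applied to $\hat M_1\hat M_2^{-1}$ gives uniqueness. Your extra care in fixing the solution class near $k=0$ is legitimate; the same care is needed at the points $\pm\epsilon$, and the paper leaves both implicit. One small slip: on the upper arc the terms that cancel are $\pm\kappa_1^0\kappa_2^0\tilde R$, $\pm\kappa_1^0\kappa_2^0R$ and $\pm(\kappa_2^0)^2R\tilde R$ (no $\kappa_1^0\kappa_2^0R\tilde R$ term occurs), but your conclusion $(\kappa_1^0)^2-(\kappa_2^0)^2=1$ is right.
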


The uniqueness of the solution of the RH problem \eqref{jump_hatM_(t)}--\eqref{res-hatM_(t)} justifies the following procedure for the inverse mapping
\[
    \{A(k), B(k), \tilde A( k),\, \tilde B( k) \} \longrightarrow \{v_0(t), v_1(t), v_2(t)\}
\]
for the $t$-problem:

\begin{enumerate}[Step 1.]
    \item Given $ A( k)$, $ B( k)$, $\tilde A( k)$, $\tilde B( k)$ construct the RH problem  \eqref{jump_hatM_(t)}--\eqref{res-hatM_(t)};

    \item Solve the constructed RH problem \eqref{jump_hatM_(t)}--\eqref{res-hatM_(t)};

    \item Evaluate the solution $\hat M^{(t)}(z,t, k)$ of this RH problem  at $ k=0$:

    \begin{equation*}\label{M(t)_at_0}
       \hat M^{(t)}(z,t, k)=\begin{pmatrix}
           \hat \alpha(z,t)&\hat \beta(z,t)\\
           \hat \beta(z,t)&\hat \alpha(z,t)
       \end{pmatrix}\left( I+\ii k\begin{pmatrix}
           \hat f_1(z,t)&\hat f_2(z,t)\\-\hat f_2(z,t)&-\hat f_1(z,t)
       \end{pmatrix}+k^2\begin{pmatrix}
          *& \hat g(z,t)\\  \hat g(z,t)&*
       \end{pmatrix}+o(k^2)\right)\eul^{\ii k \rho(t)\sigma_3}. 
    \end{equation*}

    \item Define $\hat v_j(z,t)$ from these expansions in the following way (cf. \eqref{i_beh-M_(t)}):
    \begin{subequations}
        \label{hat_v_j_via_RH_t}
         \begin{align}
     \hat v_0(z,t)&=\hat g-\hat f_2\hat f_1+\hat f_2^2, \\
     \hat v_1(z,t)&= 2\hat f_2,\\
     \hat v_2(z,t)&=1-(\hat \alpha+\hat \beta)^4.
    \end{align}
    \end{subequations}

    Define $z(t)$ as the solution of the differential equation (cf. \eqref{y_(t)}):
\begin{subequations}\label{y_(t)_via_RH_t}
    \begin{align}
     &\frac{\dd z}{\dd t}=-(\hat g-\hat f_2\hat f_1+\hat f_2^2)(\hat \alpha+\hat \beta)^2,\\
     &z(0)=0.
    \end{align}
\end{subequations}
    
Then
\begin{equation}\label{v_j_via_RH_t}
     v_j(t)=
     \hat v_j(z(t),t), \quad j=0,1,2.
    \end{equation}

\end{enumerate}

\subsubsection[The Riemann--Hilbert problem formalism in the case $u(0,t)\ge 0$]{\texorpdfstring{${u}(0,t)\ge 0$}{u-positive}}
\label{sec:geq_inv}

We introduce the matrix-valued function $M^{(t)}(t,k)$, with $\det M^{(t)}\equiv1$, defined in
the domains separated by the contour in Figure \ref{fig:contour_RH_x}:

\begin{equation}\label{M_(t)_geq}
M^{(t)}(t, k)=\begin{cases}

\left(\frac{\Phi_{\infty 1}^{(1)}(0,t, k)}{A^*( k)} ,\Phi_{\infty 2}^{(2)}(0,t, k)\right),\quad  k\in\mathbb{C}^+\cap\{| k| >\epsilon\},\\

\left(\Psi_{0 2}^{(1)}(0,t, k),\frac{\Psi_{0 1}^{(2)}(0,t, k)}{\tilde A( k)}\right),\quad  k\in\mathbb{C}^+\cap\{| k| <\epsilon\},\\

\left(\Phi_{\infty 2}^{(1)}(0,t, k) ,\frac{\Phi_{\infty 1}^{(2)}(0,t, k)}{A( k)}\right),\quad  k\in\mathbb{C}^-\cap\{| k| >\epsilon\}\\

\left(\frac{\Psi_{0 1}^{(1)}(0,t, k)}{\tilde A^*( k)},\Psi_{0 2}^{(2)}(0,t, k)\right),\quad  k\in\mathbb{C}^-\cap\{| k| <\epsilon\},
\end{cases}
\end{equation}
where the functions $\Psi_{0j}$
are defined by \eqref{psi_0}.

Then the function $M^{(t)}(t, k)$ has the following properties:

\begin{enumerate}
    
    \item Jump relation across $\mathbb{R}\cup\{|k|=\epsilon\}$
    \begin{subequations}
        \label{jump_M_(t)_geq}
        \begin{equation}
           M_-^{(t)}(t, k)=M_+^{(t)}(t, k)J^{(t)}(t, k),\quad k\in \mathbb{R}\cup\{|k|=\epsilon\}
        \end{equation}
        where
         \begin{equation}
          J^{(t)}(t, k)=\eul^{-p(0,t, k)\sigma_3} J^{(t)}_0( k)\eul^{p(0,t, k)\sigma_3}        \end{equation} 
        with
\[p(0,t, k)= \ii k\left(\eta(t)-\frac{t}{2 k^2}\right),\]
 where $\eta(t)=-\int_0^tv_0(\tau)\sqrt{-v_2(\tau)+1}\dd\tau$, and
\begin{equation}\label{J_0_jump_M_(t)_geq}
   J^{(t)}_0( k)=\begin{cases}
       \begin{pmatrix}
          1&R( k)\\
          -R^*( k)&1-R( k)R^*( k)
       \end{pmatrix},\quad  k\in\mathbb{R}\cap\{| k|>\epsilon\},\\
       
       \begin{pmatrix}
          1&\tilde R( k)\\
        -\tilde R^*( k)&1-\tilde R( k)\tilde R^*( k)
       \end{pmatrix},\quad  k\in\mathbb{R}\cap\{| k|<\epsilon\},\\

              \begin{pmatrix}
      \kappa_1^0    &\kappa_2^0+\tilde R\kappa_1^0\\\kappa_2^0-
           R^*\kappa_1^0
   & \kappa_1^0(1-\tilde R R^*)+\kappa_2^0(\tilde R- R^*)
       \end{pmatrix}
       
       ,\quad k\in\mathbb{C}^+\cap\{| k|=\epsilon\},\\

\begin{pmatrix}
        \kappa_1^0&R\kappa_1^0-\kappa_2^0
           \\
         -\kappa_2^0-\tilde R^*\kappa_1^0 & \kappa_1^0(1-\tilde R ^* R)+\kappa_2^0(\tilde R^*- R)
       \end{pmatrix}

       ,\quad  k\in\mathbb{C}^-\cap\{| k|=\epsilon\}
       
   \end{cases}
\end{equation}
with $R( k):=\frac{B( k)}{A( k)}$ and $\tilde R( k):=\frac{\tilde B( k)}{\tilde A( k)}$.
    \end{subequations}

    \begin{remark}
    In the  case $Q(0,0)=I$ (with $\kappa_1^0=1$ and $\kappa_2^0=0$),
    the jump matrix $J_0^{(t)}(k)$ simplifies to
\begin{equation}\label{J_0_jump_M_(t)_simpl_geq}
   J^{(t)}_0( k)=\begin{cases}
       \begin{pmatrix}
          1&R( k)\\
          -R^*( k)&1-R( k)R^*( k)
       \end{pmatrix},\quad  k\in\mathbb{R}\cap\{| k|>\epsilon\},\\
       \begin{pmatrix}
          1&\tilde R( k)\\
        -\tilde R^*( k)&1-\tilde R( k)\tilde R^*( k)
       \end{pmatrix},\quad  k\in\mathbb{R}\cap\{| k|<\epsilon\},\\
       
       \begin{pmatrix}
        1 &\tilde R(k)\\
          -R^* & 1-\tilde R(k) R^* 
       \end{pmatrix},\quad  k\in\mathbb{C}^+\cap\{| k|=\epsilon\},\\

              \begin{pmatrix}
          1&R\\
         - \tilde R^*(k)&1-R\tilde R^*
       \end{pmatrix},\quad  k\in\mathbb{C}^-\cap\{| k|=\epsilon\}.
       
   \end{cases}
\end{equation}
    \end{remark}

    \item Behavior at $\infty$:
\begin{equation}\label{inf_M_(t)_geq}
     M^{(t)}(t, k)=I+O\left(\frac{1}{ k}\right),\quad k\to\infty.
\end{equation}

\item $\det M^{(t)}(t, k)\equiv1$

\item Symmetry properties:
\begin{equation}\label{sym-M_(t)_geq}
M^{(t)}( k)=\sigma_1\overline{M^{(t)}(\bar k)}\sigma_1,\qquad M^{(t)}(k)=\sigma_1M^{(t)}(-k)\sigma_1
\end{equation}

\item Residue properties. 
Let $\{\nu_j\}$ be zeros of $A( k)$  in $\mathbb{C}^-$.
Assume that they are all simple. Then

\begin{align}\label{res-M+_(t)_geq}
\Res_{\nu_j}M^{(t)(2)}(t, k)&=\frac{B(\nu_j)e^{-2p(0,t,\nu_j)}}{\dot A(\nu_j)}M^{(t)(1)}(t,\nu_j),\\
\label{res-M-_(t)_geq}
\Res_{\bar\nu_j}M^{(t)(1)}(t, k)&=\frac{B^*(\bar\nu_j)e^{2p(0,t,\bar\nu_j)}}{\dot A^*(\bar\nu_j)}M^{(t)(2)}(t,\bar\nu_j).
\end{align}

\item Behavior at $0$. 
\begin{subequations}\label{i_beh-M_(t)_geq}
\begin{equation}
M^{(t)}(t, k)=Q(0,t)\left(I+\ii kM_1^{(t)}(t)+k^2M_2^{(t)}(t)+O(k^3)\right) \eul^{\ii k \eta(t)\sigma_3}, 
\end{equation}
where
\begin{equation}
    M_1^{(t)}=\begin{pmatrix}
    \int_0^t\frac{v_{1t}}{2}(\tau)\dd\tau&\frac{v_1}{2}\\
    -\frac{v_1}{2}&-\int_0^t\frac{v_{1t}}{2}(0,\tau)\dd\tau
\end{pmatrix}=\frac{1}{2}\begin{pmatrix}
    v_1(t)-v_1(0)&v_1(t)\\
    -v_1(t)&-v_1(t)+v_1(0)
\end{pmatrix},
\end{equation}

\begin{align}
    M_2^{(t)}&=\begin{pmatrix}
    *&\frac{v_{1t}}{2}+\frac{v_0v_2}{2}+\frac{v_1}{2}\int_0^t\frac{v_{1t}}{2}(\tau)\dd\tau\\
    \frac{v_{1t}}{2}+\frac{v_0v_2}{2}+\frac{v_1}{2}\int_0^t\frac{v_{1t}}{2}(\tau)\dd\tau&*
\end{pmatrix}=\\
&=\begin{pmatrix}
    *&v_0(t)-\frac{v_1(t)v_1(0)}{4}\\
    v_0(t)-\frac{v_1(t)v_1(0)}{4}&*
\end{pmatrix}
\end{align}
    
\end{subequations}

\end{enumerate}

The properties of $M^{(t)}(t,k)$ define a family of RH factorization problems. As before, the jump matrix contains the phase $p(0,t,k)$, which depends on
$\eta(t)=-\int_0^t v_0(\tau)\sqrt{1-v_2(\tau)}\,\dd\tau $.
We therefore introduce an independent parameter $z$ and set
$\hat p(z,t,k)=\ii k\left(z-\frac{t}{2k^2}\right)$.
Then, for
$z=z(t)=\eta(t)$,
one has $p(0,t,k)=\hat p(\eta(t),t,k)$.

Consider the following family of RH problems parametrized by $z$ and $t$:

\textbf{The Riemann--Hilbert problem RH$^{(t)}$:} Given $A(k)$ and $B(k)$ for $ k\in\mathbb{C}^-$, $\tilde A( k)$ and $\tilde B( k)$ for $ k\in\mathbb{C}$ and a set $\{\nu_j\}_1^K\subset \mathbb{C}^-$, find a piecewise meromorphic $2\times 2$  matrix-valued function $\hat M^{(t)}(z,t, k)$ that satisfies the following conditions:

\begin{enumerate}
    \item Jump relation across $\mathbb{R}\cup\{|k|=\epsilon\}$
    \begin{subequations}
        \label{jump_hatM_(t)_geq}
        \begin{equation}
           \hat M_-^{(t)}(z,t, k)=\hat M_+^{(t)}(z,t, k)\hat J^{(t)}(z,t, k),\quad k\in\mathbb{R}\cup\{|k|=\epsilon\},
        \end{equation}
        where
         \begin{equation}
          \hat J^{(t)}(z,t, k)=\eul^{-\ii k\left(z-\frac{t}{2k^2}\right)\sigma_3}J^{(t)}_0( k)\eul^{\ii k\left(z-\frac{t}{2k^2}\right)\sigma_3}
        \end{equation} 
        with $J^{(t)}_0( k)$ defined in \eqref{J_0_jump_M_(t)_geq}.
        \end{subequations}

\item Behavior at $\infty$:
\begin{equation}\label{inf_hatM_(t)_geq}
     \hat M^{(t)}(z,t, k)=I+O(\frac{1}{ k}),\quad k\to\infty.
\end{equation}

\item Residue conditions: for $j=1,\dots,K$,
\begin{subequations}
    \label{res-hatM_(t)_geq}
\begin{align}\label{res-hatM+_(t)_geq}
\Res_{\nu_j}\hat M^{(t)(2)}(z,t, k)&=\frac{B(\nu_j)e^{-2\hat p(z,t,\nu_j)}}{\dot A(\nu_j)}\hat M^{(t)(1)}(z,t,\nu_j),\\
\label{res-hatM-_(t)_geq}
\Res_{\bar\nu_j}\hat M^{(t)(1)}(z,t, k)&=\frac{B^*(\bar\nu_j)e^{2\hat p(z,t,\bar\nu_j)}}{\dot A^*(\bar\nu_j)}\hat M^{(t)(2)}(z,t,\bar\nu_j).
\end{align}
\end{subequations}

\end{enumerate}

\begin{proposition}
    \begin{enumerate}
        \item $\det \hat M^{(t)}\equiv1$.
    
        \item If the solution of the RH problem \eqref{jump_hatM_(t)_geq}--\eqref{res-hatM_(t)_geq} exists, it is unique.
 
    \end{enumerate}
\end{proposition}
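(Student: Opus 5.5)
The plan is the standard Liouville-type argument, carried out in two steps. First the determinant, then uniqueness via the ratio of two solutions. The $u(0,t)\ge0$ case differs from the earlier ones only in which half-plane contains the residue points $\{\nu_j\}\subset\mathbb C^-$. The same argument therefore applies verbatim.

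\emph{Step 1: $\det\hat M^{(t)}\equiv1$.} I first check that $\det J^{(t)}_0(k)=1$ on every part of the contour. On $\mathbb R\cap\{|k|>\epsilon\}$ and $\mathbb R\cap\{|k|<\epsilon\}$ this is immediate from the triangular-type structure, since $1\cdot(1-RR^*)+RR^*=1$. On the arcs $|k|=\epsilon$ it is a direct computation: for instance on $\mathbb C^+\cap\{|k|=\epsilon\}$ the determinant equals
\[
(\kappa_1^0)^2(1-\tilde RR^*)+\kappa_1^0\kappa_2^0(\tilde R-R^*)-(\kappa_2^0+\tilde R\kappa_1^0)(\kappa_2^0-R^*\kappa_1^0).
\]
This simplifies to $(\kappa_1^0)^2-(\kappa_2^0)^2=1$, because $(\kappa_1^0)^2-(\kappa_2^0)^2=1$ by the definition of $\kappa_{1,2}^0$. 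Conjugation by $\eul^{\mp\hat p\sigma_3}$ does not change determinants, so $\det\hat M^{(t)}$ has no jump across $\mathbb R\cup\{|k|=\epsilon\}$.

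The residue conditions \eqref{res-hatM_(t)_geq} say that at $\nu_j$ only the second column has a pole, and its residue is proportional to the first column evaluated there; symmetrically at $\bar\nu_j$. Hence $\det\hat M^{(t)}$ has at most removable singularities at these points: the singular part of the determinant is $\det(\hat M^{(1)},c\,\hat M^{(1)})/(k-\nu_j)=0$. Thus $\det\hat M^{(t)}$ is entire, tends to $1$ at infinity by \eqref{inf_hatM_(t)_geq}, and equals $1$ by Liouville's theorem. One subtlety is the point $k=0$, where $\hat J^{(t)}$ contains $\eul^{\pm \ii t/k}$; but $0$ lies inside the disk, off the contour, so $\hat M^{(t)}$ is analytic there and no issue arises.

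\emph{Step 2: uniqueness.} Let $M_1$ and $M_2$ be two solutions, and set $N=M_1M_2^{-1}$, which is well defined since $\det M_2\equiv1$. Across the contour,
\[
N_-=M_{1+}\hat J\hat J^{-1}M_{2+}^{-1}=N_+,
\]
so $N$ has no jumps. It remains to show that $N$ is regular at $\nu_j$ and $\bar\nu_j$; this is the step requiring care. Near $\nu_j$ I write each solution as $M_i=\widetilde M_i\bigl(I+\tfrac{c_j}{k-\nu_j}E_{12}\bigr)$, where $\widetilde M_i$ is holomorphic at $\nu_j$ and $c_j$ is the residue constant in \eqref{res-hatM+_(t)_geq}. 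The residue condition is exactly what makes $\widetilde M_i$ holomorphic: its second column is $M^{(2)}_i-\tfrac{c_j}{k-\nu_j}M^{(1)}_i$, and the pole cancels. Then
\[
N=\widetilde M_1\widetilde M_2^{-1}
\]
is holomorphic at $\nu_j$; the same argument with $E_{21}$ handles $\bar\nu_j$. Finally, $N\to I$ as $k\to\infty$, so Liouville's theorem gives $N\equiv I$, i.e.\ $M_1=M_2$.

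I expect the only real obstacle to be technical regularity. The plan needs boundary values in $L^2$ and the behaviour of $N$ at the self-intersection points $k=\pm\epsilon$ of the contour. I would handle this by assuming the standard $L^2$ formulation, with solutions bounded near $\pm\epsilon$, so that any singularity of $N$ at these isolated points is removable. If the paper's setting allows milder behaviour, an $L^1_{\mathrm{loc}}$ argument via Morera's theorem would replace this step.
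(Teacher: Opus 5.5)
Your argument is the paper's argument: Liouville applied to $\det\hat M^{(t)}$, then to $\hat M_1\hat M_2^{-1}$. The paper only states this in one line for the analogous propositions. Your verification of $\det J^{(t)}_0=(\kappa_1^0)^2-(\kappa_2^0)^2=1$ on the arcs is correct, and so is the local factorization $M_i=\widetilde M_i\bigl(I+\tfrac{c_j}{k-\nu_j}E_{12}\bigr)$ at the residue points.

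One claim is false, however: $k=0$ is \emph{not} off the contour.
\begin{itemize}
\item The contour is $\mathbb R\cup\{|k|=\epsilon\}$, and $J^{(t)}_0$ is prescribed on $\mathbb R\cap\{|k|<\epsilon\}$, a segment that contains $0$.
\item So $\hat M^{(t)}$ jumps across $(-\epsilon,\epsilon)$ and is not analytic at $0$.
\item Worse, the off-diagonal entries of $\hat J^{(t)}$ there carry the factors $\eul^{\pm(2\ii kz-\ii t/k)}$. These are unimodular on $\mathbb R$ but have an essential singularity at $k=0$.
\end{itemize}
Your sentence dismissing this point therefore has to be replaced by an actual argument.

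The repair fits your own framework: treat $0$ exactly like the nodes $\pm\epsilon$. Because $\det\hat J^{(t)}\equiv1$ and $N_-=N_+$, both $\det\hat M^{(t)}$ and $N=\hat M_1\hat M_2^{-1}$ have no jump on $\mathbb R\setminus\{0\}$. Hence $0$ is an isolated singularity for each of them. It is removable as soon as solutions are required to be bounded near $k=0$, or in the $L^2$ setting via your Morera argument. This local boundedness should be made part of the admissible solution class. It is consistent with the function $M^{(t)}$ coming from the direct problem, which is bounded near $k=0$ by \eqref{i_beh-M_(t)_geq}.
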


The uniqueness of the solution of the RH problem \eqref{jump_hatM_(t)_geq}--\eqref{res-hatM_(t)_geq} justifies the following procedure for the inverse mapping
\[
    \{A(k), B(k),\tilde A( k),\, \tilde B( k) \} \longrightarrow \{v_0(t), v_1(t), v_2(t)\}
\]
for the $t$-problem:

\begin{enumerate}[Step 1.]
    \item Given $ A( k)$, $ B( k)$, $\tilde A( k)$, $\tilde B( k)$ construct the RH problem  \eqref{jump_hatM_(t)_geq}--\eqref{res-hatM_(t)_geq};

    \item Solve the constructed RH problem \eqref{jump_hatM_(t)_geq}--\eqref{res-hatM_(t)_geq};

    \item Evaluate the solution $\hat M^{(t)}(z,t, k)$ of this RH problem  at $ k=0$:

    \begin{equation}\label{M(t)_at_0_geq}
       \hat M^{(t)}(z,t, k)=\begin{pmatrix}
           \hat \alpha(z,t)&\hat \beta(z,t)\\
           \hat \beta(z,t)&\hat \alpha(z,t)
       \end{pmatrix}\left( I+\ii k\begin{pmatrix}
           \hat f_1(z,t)&\hat f_2(z,t)\\-\hat f_2(z,t)&-\hat f_1(z,t)
       \end{pmatrix}+k^2\begin{pmatrix}
          *& \hat g(z,t)\\  \hat g(z,t)&*
       \end{pmatrix}+o(k^2)\right)\eul^{\ii k \rho(t)\sigma_3}. 
    \end{equation}

    \item Define $\hat v_j(z,t)$ from these expansions in the following way (cf. \eqref{i_beh-M_(t)_geq}):
    \begin{subequations}
        \label{hat_v_j_via_RH_t_geq}   
    \begin{align}
     \hat v_0(z,t)&=\hat g-\hat f_2\hat f_1+\hat f_2^2, \\
     \hat v_1(z,t)&= 2\hat f_2,\\
     \hat v_2(z,t)&=1-(\hat \alpha+\hat \beta)^4.
    \end{align}
  \end{subequations}

    Define $z(t)$ as the solution of the differential equation:
\begin{subequations}\label{y_(t)_via_RH_t_geq}
      \begin{align}
     &\frac{\dd z}{\dd t}=-(\hat g-\hat f_2\hat f_1+\hat f_2^2)(\hat \alpha+\hat \beta)^2,\\
     &z(0)=0.
    \end{align}
\end{subequations}
Then
\begin{equation}\label{v_j_via_RH_t_geq}
     v_j(t)=
     \hat v_j(z(t),t), \quad j=0,1,2.
    \end{equation}

\end{enumerate}

\section{The SW equation,  associated Lax pair, and  Riemann--Hilbert formalism in $y,t$ variables}\label{sec:SWinyt}

\subsection{The SW equation and the  associated Lax pair in $y,t$ variables}

To express the data of the RH problem explicitly in terms of the initial data alone, we introduce a new spatial variable $y=y(x,t)$ satisfying
\begin{subequations}\label{y-xt}
\begin{align}
\label{y_x} y_x & = \sqrt{m+1},\\
   \label{y_t} y_t &=-u \sqrt{m+1}.
\end{align}
    \end{subequations}
  These relations are compatible due to \eqref{SW} and determine $y$ up to an additive constant, which is fixed differently depending on the spatial domain.

In particular, for the problem on the line, this can be done by setting
\begin{equation*}
    y(x,t)=x+\int_{-\infty}^x \sqrt{m(\xi,t)+1}-1\dd\xi
\end{equation*}
whereas for the problem on the half-line $x\in (0,+\infty)$ we set
\begin{equation}\label{y}
    y(x,t)=\int_0^x \sqrt{m(\xi,t)+1}\dd\xi-\int_0^t u(0,\zeta)\sqrt{m(0,\zeta)+1}\dd\zeta.
\end{equation}
Thus for the half-line problem, we fix the additive constant by the normalization $y(0,0)=0$.

Then, differentiating the identity $x(y(x,t),t)=x$, we obtain the equations characterizing the 
inverse mapping $x=x(y,t)$:
\begin{subequations}\label{x-yt}
\begin{align}
\label{x_y} x_y & = \frac{1}{\sqrt{\hat m+1}},\\
   \label{x_t} x_t &=\hat u,
\end{align}
    \end{subequations}
where $\hat{ m }(y,t) = m(x(y,t),t)$ and  $\hat{ u }(y,t) = u(x(y,t),t)$.

\begin{proposition} (SW equation in the $(y,t)$ variables)  Let $u(x,t)$ and $m(x,t)$ ($m(x,t)+1>0$) satisfy \eqref{SW} and let $y(x,t)$ be 
such that \eqref{y-xt} hold.
Then   $\hat m(y,t):=m(x(y,t),t)$, $\hat u(y,t):=u(x(y,t),t)$, and 
$\hat v(y,t):=u_x(x(y,t),t)$ satisfy the following system of equations:

\begin{subequations}\label{SW_in_y}
\begin{align}\label{SW_in_y-1}
&(\sqrt{\hat m+1})_t=-\hat v\sqrt{\hat m+1},\\\label{SW_in_y-2}
&\hat v=\hat u_y\sqrt{\hat m+1},\\\label{SW_in_y-3}
&\hat m=-\hat v_y\sqrt{\hat m+1}.
\end{align}
\end{subequations}
    
\end{proposition}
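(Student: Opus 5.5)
The plan is a direct chain-rule computation. Two inputs are needed: the conservation-law form \eqref{SW}, and the relations \eqref{x-yt} for the inverse map $x=x(y,t)$.

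For any function $f(x,t)$, set $\hat f(y,t)=f(x(y,t),t)$. Then
\[
\hat f_y=f_x\,x_y=\frac{f_x}{\sqrt{\hat m+1}},\qquad \hat f_t=f_t+f_x\,x_t=f_t+\hat u\,f_x,
\]
where the right-hand sides are evaluated at $(x(y,t),t)$. In particular, $\partial_t$ at fixed $y$ is the material derivative $\partial_t+u\partial_x$ along the characteristics $\dd x/\dd t=u$ discussed in the introduction.

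\textbf{Equation \eqref{SW_in_y-2}.} Apply the rule to $f=u$: $\hat u_y=u_x/\sqrt{\hat m+1}=\hat v/\sqrt{\hat m+1}$. Multiplying through gives \eqref{SW_in_y-2}.

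\textbf{Equation \eqref{SW_in_y-3}.} Apply the rule to $f=u_x$: $\hat v_y=u_{xx}/\sqrt{\hat m+1}=-\hat m/\sqrt{\hat m+1}$, since $m=-u_{xx}$. This is \eqref{SW_in_y-3}.

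\textbf{Equation \eqref{SW_in_y-1}.} Apply the rule to $f=\sqrt{m+1}$:
\[
(\sqrt{\hat m+1})_t=(\sqrt{m+1})_t+u(\sqrt{m+1})_x .
\]
Expanding \eqref{SW} as $(\sqrt{m+1})_t=-u_x\sqrt{m+1}-u(\sqrt{m+1})_x$, the transport terms cancel. What remains is $-u_x\sqrt{m+1}$ evaluated at $x(y,t)$, which equals $-\hat v\sqrt{\hat m+1}$. This is \eqref{SW_in_y-1}.

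\textbf{Justification of the change of variables.} The map $x\mapsto y(x,t)$ must be a $C^1$ diffeomorphism for each $t$. This holds because $y_x=\sqrt{m+1}>0$, which uses the positivity of $m+1$ (propagated by the flow, as noted in the remark in Section~\ref{sec:2}). Moreover, $y$ is well defined with \eqref{y-xt} holding, since the compatibility condition $(y_x)_t=(y_t)_x$ is exactly \eqref{SW}. Differentiating $y(x(y,t),t)=y$ then gives \eqref{x-yt}.

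The regularity in assumption (i) makes all the derivatives used classical. No step is a real obstacle; the only care needed is to keep track of which variable is held fixed in each derivative, so that the cancellation of the transport term in \eqref{SW_in_y-1} is done correctly.
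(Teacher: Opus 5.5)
Your proposal is correct and follows essentially the same route as the paper: the chain-rule identities $\hat f_y=f_x x_y$ and $\hat f_t=f_t+\hat u f_x$, combined with \eqref{x-yt} and the conservation law \eqref{SW}, where the cancellation of the transport term gives \eqref{SW_in_y-1}. Your extra paragraph on well-definedness of $y$ and invertibility of $x\mapsto y$ is a harmless addition; the paper takes this from the preceding discussion, and the positivity $m+1>0$ is already a hypothesis of the statement, so you need not invoke its propagation by the flow.
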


\begin{proof}
As we discussed above, \eqref{y-xt} implies \eqref{x-yt}. Substituting $\left(\sqrt{m+1}\right)_t=-\left(u\sqrt{m+1}\right)_x$ from \eqref{SW} and $x_t=\hat u(y,t)$ from \eqref{x_t} into the equality
\[
\left(\sqrt{\hat m(y,t)+1}\right)_t=\left. \left(\left(\sqrt{ m(x,t)+1}\right)_x x_t(y,t)+\left(\sqrt{ m(x,t)+1}\right)_t\right)\right|_{x=x(y,t)}
\]
we get \eqref{SW_in_y-1}.
Now, substituting $x_y=\frac{1}{\sqrt{\hat m+1}}$ from \eqref{x_y} into the equality
$\hat u_y(y,t)=\left. \left(u_x(x,t)x_y(y,t)\right)\right|_{x=x(y,t)}$
we get \eqref{SW_in_y-2}.
Finally, \eqref{SW_in_y-3} follows from substituting \eqref{x_y} into
$
\hat v_y(y,t)=\left. \left(u_{xx}(x,t) x_y(y,t)\right)\right|_{x=x(y,t)}$.
\end{proof}

\begin{remark}
    The system of equations \eqref{SW_in_y} can be written in the conservation law form
    \begin{equation}\label{cons_y}
        \left(\frac{1}{\sqrt{\hat m+1}}\right)_t=\hat u_y.
    \end{equation}

\end{remark}

\begin{remark}
    Let
\[
r:=\ln\sqrt{\hat m+1}.
\]
Then system \eqref{SW_in_y} can be written as
\begin{subequations}\label{SW_q}
\begin{align}
r_t&=-\hat v, \label{SW_q-1}\\
\hat u_y&=\hat v e^{-r}, \label{SW_q-2}\\
\hat v_y&=e^{-r}-e^{r}. \label{SW_q-3}
\end{align}
\end{subequations}
Differentiating \eqref{SW_q-1} with respect to $y$ and using
\eqref{SW_q-3}, we obtain
\[
r_{ty}=e^{r}-e^{-r}=2\sinh r,
\]
that is, the sinh--Gordon equation
\begin{equation}\label{sinh-Gordon}
r_{ty}=2\sinh r.
\end{equation}
Moreover, from \eqref{SW_q-1} and \eqref{SW_q-2} we have
\[
\hat u_y=-e^{-r}r_t=(e^{-r})_t.
\]
\end{remark}

Similarly, the reverse  change of variable $(y,t)\mapsto(x,t)$ 
reduces \eqref{SW_in_y} to \eqref{SW} with 
$m(x,t):=\hat m(y(x,t),t)$ and $u(x,t):=\hat u(y(x,t),t)$.

\begin{proposition}\label{prop:hatu_u}
    Let $\hat u(y,t)$, $\hat v(y,t)$,  and $\hat m(y,t)$ with $\hat m(y,t)+1>0$ satisfy 
    \eqref{SW_in_y} and let $x(y,t)$ be 
such that \eqref{x-yt} holds. Define $u(x,t):=\hat u(y(x,t),t)$ and $m(x,t):=\hat m(y(x,t),t)$.
Then \eqref{SW} holds for $u(x,t)$ and $m(x,t)$.
\end{proposition}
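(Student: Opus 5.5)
The plan is to reverse the chain-rule computation of the previous proposition. We are given $x(y,t)$ with $x_y=1/\sqrt{\hat m+1}>0$ and $x_t=\hat u$. Compatibility of these two equations, i.e. $(x_y)_t=(x_t)_y$, is exactly the conservation law \eqref{cons_y}, which follows from \eqref{SW_in_y-1}--\eqref{SW_in_y-2}. Since $x_y>0$, the map $y\mapsto x(y,t)$ is strictly increasing and hence invertible for each fixed $t$. Denote the inverse by $y(x,t)$. Differentiating the identity $x(y(x,t),t)=x$ in $x$ and in $t$ gives
\[
y_x=\sqrt{\hat m+1}\big|_{y=y(x,t)}=\sqrt{m+1},\qquad y_t=-\frac{x_t}{x_y}\Big|_{y=y(x,t)}=-u\sqrt{m+1},
\]
so \eqref{y-xt} holds for the functions $u,m$ defined in the statement.

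Next, I will show that the auxiliary function $v(x,t):=\hat v(y(x,t),t)$ coincides with $u_x$, and that $m=-u_{xx}$; this is what gives $m$ and $u$ the meaning they have in \eqref{SW}. By the chain rule and \eqref{SW_in_y-2},
\[
u_x=\hat u_y\,y_x=\hat u_y\sqrt{\hat m+1}=\hat v .
\]
Differentiating once more and using \eqref{SW_in_y-3},
\[
u_{xx}=\hat v_y\sqrt{\hat m+1}=-\hat m ,
\]
so indeed $m=-u_{xx}$.

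It remains to verify the evolution equation $(\sqrt{m+1})_t=-(u\sqrt{m+1})_x$. Write $\sqrt{m+1}=F(y(x,t),t)$ with $F=\sqrt{\hat m+1}$. Then
\[
(\sqrt{m+1})_t=F_y\,y_t+F_t=-uF\,F_y-\hat vF ,
\]
where the last step uses \eqref{SW_in_y-1}. On the other hand,
\[
(u\sqrt{m+1})_x=u_xF+uF_y\,y_x=\hat vF+uF\,F_y .
\]
The two expressions are negatives of each other, which is \eqref{SW}.

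The main obstacle I expect is not the algebra but justifying the change of variables. One must show that $x(\cdot,t)$ is a bijection onto the relevant $x$-domain and that the composed functions have enough regularity for the chain rule to apply. Monotonicity follows from $\hat m+1>0$, and differentiability of the inverse follows from the inverse function theorem. On the half-line, one also needs the normalization $x(0,0)=0$, consistent with \eqref{y}. I would state these points briefly as the standing smoothness assumptions.
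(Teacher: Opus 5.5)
Your proposal is correct and follows the paper's proof essentially step for step: you invert \eqref{x-yt} to obtain \eqref{y-xt}, then use the chain rule with \eqref{SW_in_y-2} to get $u_x=\hat v$, with \eqref{SW_in_y-3} to get $m=-u_{xx}$, and with \eqref{SW_in_y-1} and $y_t=-u\sqrt{m+1}$ to get $(\sqrt{m+1})_t=-(u\sqrt{m+1})_x$. Your remarks on the monotonicity of $x(\cdot,t)$ make explicit what the paper covers with ``similarly to the above'', though the normalization $x(0,0)=0$ plays no role in this purely local computation.
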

\begin{proof}
Similarly to the above, \eqref{x-yt} implies \eqref{y-xt}.
Then 
\[
u_x=\hat u_y y_x = \frac{\hat v}{\sqrt{m+1}}\sqrt{m+1} = \hat v|_{y=y(x,t)},
\]
where we have used \eqref{SW_in_y-2} and \eqref{y_x}.
Further, differentiating this w.r.t. $x$ and using \eqref{SW_in_y-3} we get
\[
u_{xx}=\hat v_y y_x= \hat v_y \sqrt{m+1} = - m
\]
and thus $m(x,t)=-u_{xx}(x,t)$.

Finally, using \eqref{SW_in_y-3} and \eqref{y_t} we have
\begin{align*}
\left(\sqrt{m+1}\right)_t & = \left(\sqrt{\hat m+1}\right)_t + \left(\sqrt{\hat m+1}\right)_y 
y_t = -\hat v \sqrt{m+1} + \left(\sqrt{m+1}\right)_x x_y (-u\sqrt{m+1}) \\
& = -u_x \sqrt{m+1} -u \left(\sqrt{m+1}\right)_x = - \left(u\sqrt{m+1}\right)_x.
\end{align*}

\end{proof}

Now, let us reformulate the original Lax pair equations in the $(y,t)$ variables.

\begin{proposition}
The Lax pair \eqref{Lax-Q-form} in the variables $(y,t)$ takes the form 
\begin{subequations}\label{Lax_y}
\begin{align}\label{Lax_y_y}
    &\hat \Psi_{ y}+\ii k \sigma_3 \hat \Psi=\frac{1}{4}\frac{\hat m_y}{\hat m+1}\begin{pmatrix}
        0&1\\1&0
    \end{pmatrix}\hat \Psi,\\
   &\hat\Psi_{ t}=-\frac{1}{4\ii k}\begin{pmatrix}
       \sqrt{\hat m+1}+\frac{1}{\sqrt{\hat m+1}} &  -\sqrt{\hat m+1}+\frac{1}{\sqrt{\hat m+1}}\\\label{Lax_y_t}
        \sqrt{\hat m+1}-\frac{1}{\sqrt{\hat m+1}}& -\sqrt{\hat m+1}-\frac{1}{\sqrt{\hat m+1}}
   \end{pmatrix} \hat\Psi.
\end{align}
\end{subequations}
\end{proposition}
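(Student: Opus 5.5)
The plan is to set $\hat\Psi(y,t,k):=\tilde\Phi_\infty(x(y,t),t,k)$, where $x=x(y,t)$ is the inverse of \eqref{y}. We then transform each equation of \eqref{Lax-Q-form} by the chain rule, using \eqref{y-xt}, \eqref{x-yt} and the conservation law \eqref{SW}. No spectral analysis is needed; the statement is a direct computation, and the only real content is one cancellation in the $t$-part.

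\emph{Step 1 ($y$-equation).} Since $y_x=\sqrt{m+1}$, we have $\tilde\Phi_{\infty x}=\sqrt{m+1}\,\hat\Psi_y$. Also $m_x=\hat m_y\,y_x=\hat m_y\sqrt{m+1}$. Substituting both into \eqref{x-eq} with $U_\infty$ from \eqref{U-hat} and dividing by $\sqrt{m+1}>0$ gives
\[
\hat\Psi_y+\ii k\sigma_3\hat\Psi=\frac{\hat m_y}{4(\hat m+1)}\sigma_1\hat\Psi,
\]
which is \eqref{Lax_y_y}.

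\emph{Step 2 ($t$-equation).} By \eqref{x_t}, $\hat\Psi_t=\tilde\Phi_{\infty t}+x_t\tilde\Phi_{\infty x}=\tilde\Phi_{\infty t}+u\,\tilde\Phi_{\infty x}$. We insert both equations of \eqref{Lax-Q-form} together with \eqref{hat-V}.
\begin{itemize}
\item The terms $\pm\ii k u\sqrt{m+1}\,\sigma_3$ cancel.
\item The coefficient of $\sigma_1$ becomes $\frac{m_t+u m_x}{4(m+1)}+\frac{u_x}{2}$. Writing \eqref{SW} as $(\sqrt{m+1})_t+u(\sqrt{m+1})_x=-u_x\sqrt{m+1}$ gives $\frac{m_t+um_x}{2\sqrt{m+1}}=-u_x\sqrt{m+1}$, hence $\frac{m_t+um_x}{4(m+1)}=-\frac{u_x}{2}$. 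So the $\sigma_1$ term vanishes identically.
\item The constant term $\frac{\ii}{2k}\sigma_3$ cancels against the $-2$ inside $-\frac{1}{4\ii k}\bigl(\sqrt{m+1}+\frac{1}{\sqrt{m+1}}-2\bigr)\sigma_3$, because $\frac{2}{4\ii k}=-\frac{\ii}{2k}$.
\end{itemize}

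\emph{Step 3 (matching the matrix).} What remains is
\[
-\frac{1}{4\ii k}\Bigl(\sqrt{\hat m+1}+\tfrac{1}{\sqrt{\hat m+1}}\Bigr)\sigma_3+\frac{1}{4k}\Bigl(\sqrt{\hat m+1}-\tfrac{1}{\sqrt{\hat m+1}}\Bigr)\sigma_2 .
\]
Using $\frac{1}{4k}\sigma_2=-\frac{1}{4\ii k}\begin{pmatrix}0&-1\\1&0\end{pmatrix}$ and collecting entries gives exactly the matrix in \eqref{Lax_y_t}.

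The main point to get right is Step 2. It must be checked that the transport identity from \eqref{SW} removes the $\sigma_1$ part exactly, and that the $1/k$ terms recombine with the correct signs. Everything else is bookkeeping under the standing assumptions (i)--(iii), which make $x(y,t)$ a $C^1$ diffeomorphism because $\sqrt{m+1}>0$.
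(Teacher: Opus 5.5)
Your proposal is correct and follows the paper's approach: a direct chain-rule substitution $\hat\Psi(y,t)=\tilde\Phi_\infty(x(y,t),t)$ using $x_y=1/\sqrt{\hat m+1}$ and $x_t=\hat u$. You also make explicit the cancellations that the paper's one-line proof leaves implicit, in particular that the $\sigma_1$ coefficient $\frac{m_t+um_x}{4(m+1)}+\frac{u_x}{2}$ vanishes because of the conservation law \eqref{SW}.
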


\begin{proof}
  Introducing $\hat\Psi(y,t) = \hat\Phi (x(y,t),t)$ and taking into account \eqref{x_y} and \eqref{x_t}, the Lax pair \eqref{Lax-Q-form} in the variables $(y,t)$ takes the form \eqref{Lax_y}.
\end{proof}

\subsection{The  Riemann--Hilbert formalism in $y,t$ variables}
In this subsection, we discuss how to arrive at a (local) solution of the SW equation in the 
$y,t$ variables starting from a RH problem parametrized by $y$ and $t$, suggested 
by the properties of Jost solutions.

Let $\Gamma$ denote an oriented contour in the complex plane, invariant under the symmetries $k\mapsto-k$ and $k\mapsto\bar k$. Suppose that ${\mathbb C}\setminus\Gamma = D_1\cup D_2$ so that $\Gamma$ is the counterclockwise boundary of $D_1$ (the clockwise boundary of $D_2$).

Consider the following \textbf{Riemann--Hilbert problem parametrized by $y$ and $t$}:
find a piecewise meromorphic ($ k\in {\mathbb C}\setminus\Gamma$), $2\times 2$-matrix-valued function $\hat M(y,t, k)$ satisfying the following conditions:
\begin{enumerate}[\textbullet]
\item
\emph{Jump} condition
\begin{equation}\label{jump-y_loc}
\hat M_-(y,t, k)=\hat M_+(y,t,k) \hat J(y,t, k),\qquad  k\in\Gamma,
\end{equation}
where 
$\hat J(y,t, k)=\eul^{-\hat p(y,t, k)\sigma_3}\hat J_0(k)\eul^{\hat p(y,t, k)\sigma_3}$ with $\hat p(y,t, k)=\ii k\left(y-\frac{t}{2k^2}\right)$ and some 
$\hat J_0( k)$ with $\det \hat J_0(k)\equiv 1$ and  such that
\begin{equation}\label{jump_at_inf}
    \hat J_0( k)=I+O\left(\frac{1}{ k}\right),\quad  k\to\infty.
\end{equation}
If $0\in\Gamma$, we additionally assume that
\begin{equation}\label{jump_at_ii}
  \hat  J_0( k)=I+O( k), \quad  k\to 0. 
\end{equation}

\item  \emph{Normalization} condition:
\begin{equation}\label{norm-m-hat_loc}
\hat M(y,t, k)=I+\ord\left(\frac{1}{ k}\right), \quad  k\to\infty.
\end{equation}

\item \emph{Residue} conditions:
\begin{align}\label{res_hatM_loc}
\Res_{  k_j}\hat M^{(1)}(y,t, k)&= c_j \hat M^{(2)}(y,t,  k_j) \eul^{2\hat p(y,t,  k_j)}, ~   k_j\in D_1, \\\label{res_hatM__loc}
\Res_{\bar k_j}\hat M^{(2)}(y,t, k)&=\bar c_j \hat M^{(1)}(y,t,\bar  k_j)\eul^{ -2\hat p(y,t,\bar  k_j)} , ~ \bar  k_j\in D_2
\end{align}
with some $\{  k_j\}$, $\{c_j\}$ such that the set $\{  k_j\}$ is symmetric under the mapping $ k\mapsto - \bar k$.
\end{enumerate}

\begin{proposition}
 Assume that $\hat M $ is a solution of RH problem \eqref{jump-y_loc}--\eqref{res_hatM__loc}. Then $\det \hat M=1$.
\end{proposition}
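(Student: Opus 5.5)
The plan is the standard Liouville argument applied to the scalar function $d(k):=\det\hat M(y,t,k)$, defined for $k\in\mathbb C\setminus\Gamma$ away from the points $k_j,\bar k_j$. The goal is to show that $d$ extends to an entire function tending to $1$ at infinity.

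\emph{No jump across $\Gamma$.} Taking determinants in \eqref{jump-y_loc} gives $d_-=d_+\det\hat J$. The conjugation by $\eul^{\mp\hat p\sigma_3}$ does not change the determinant, so $\det\hat J=\det\hat J_0\equiv1$, and hence $d_+=d_-$ on $\Gamma$. Away from the self-intersection points of $\Gamma$, and away from $0$ if $0\in\Gamma$, Morera's theorem then shows that $d$ is analytic across $\Gamma$.

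\emph{Removability of the poles.} By \eqref{res_hatM_loc}, at $k_j$ only the first column $\hat M^{(1)}$ has a simple pole, and its residue is a multiple of $\hat M^{(2)}(k_j)$. The second column is analytic at $k_j$. Write $\hat M^{(1)}=\frac{c_j\eul^{2\hat p(k_j)}\hat M^{(2)}(k_j)}{k-k_j}+h(k)$ with $h$ analytic. Then
\[
d=\det\Big(\tfrac{c_j\eul^{2\hat p(k_j)}}{k-k_j}\hat M^{(2)}(k_j)+h,\ \hat M^{(2)}\Big).
\]
The singular part equals $\frac{c_j\eul^{2\hat p(k_j)}}{k-k_j}\det\big(\hat M^{(2)}(k_j),\hat M^{(2)}(k)\big)$. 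This determinant vanishes at $k=k_j$, so the quotient is bounded and the singularity is removable. The same argument applies at $\bar k_j$ using \eqref{res_hatM__loc}, with the roles of the columns exchanged.

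\emph{Isolated exceptional points.} These are the points of self-intersection of $\Gamma$ (for example $\pm\epsilon$ in Figure~\ref{fig:contour_RH_x}) and the point $0$ if $0\in\Gamma$. Near $0$ the conjugating factor $\eul^{\pm\ii t/(2k)}$ is essentially singular, but this does not matter for $\det\hat J$. The difficulty is rather to know that $\hat M$ is bounded, or at least $o(|k-k_*|^{-1})$, near such points. I would build this into the notion of solution: $\hat M_\pm$ are bounded near the nodes, which is the natural $L^2$/bounded-boundary-value setting used for these problems, and this is where \eqref{jump_at_ii} enters. Then $d$ has only removable isolated singularities there.

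\emph{Conclusion.} By \eqref{norm-m-hat_loc}, $d(k)=1+\ord(1/k)$ as $k\to\infty$. So $d$ is a bounded entire function tending to $1$, and Liouville's theorem gives $d\equiv1$.

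The main obstacle I expect is the justification at the nodes of $\Gamma$, in particular at $k=0$ when $0\in\Gamma$. There one must make sure that the regularity class of solutions excludes growth of $\hat M$ that could create a pole of $d$. I would handle this by requiring, as part of the definition of a solution, local boundedness or $L^2$ boundary values near these points, so that the singularities of $d$ are at worst $o(|k-k_*|^{-1})$ and therefore removable.
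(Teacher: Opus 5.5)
Your proposal is correct and follows the same route as the paper: $\det\hat J=\det\hat J_0\equiv1$ removes the jump of $\det\hat M$ across $\Gamma$, the residue conditions make $k_j$ and $\bar k_j$ removable, and Liouville's theorem together with the normalization \eqref{norm-m-hat_loc} gives $\det\hat M\equiv1$ (the paper cites \eqref{jump_at_inf} at this point, but the normalization is what is actually needed). Your explicit determinant computation at the poles, and your requirement of bounded (or $L^2$) boundary values near the nodes $\pm\epsilon$ and $k=0$, supply details that the paper's two-line proof leaves unstated.
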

\begin{proof}
The fact that $\det \hat J_0(k)\equiv 1$ implies that $\det \hat M$ has neither a jump across $\Gamma$ nor singularities at the points $k_j$ and $\bar k_j$, and thus is an entire function. Therefore, \eqref{jump_at_inf}
together with Liouville's theorem implies $\det \hat M=1$.

\end{proof}

\begin{proposition}\label{prop:uniqness}
     If a solution of the RH problem \eqref{jump-y_loc}--\eqref{res_hatM__loc} exists, it is unique.
\end{proposition}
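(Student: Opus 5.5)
The plan is the standard Liouville argument, using the preceding proposition that any solution has $\det\hat M\equiv1$. Suppose $\hat M$ and $\check M$ are two solutions of \eqref{jump-y_loc}--\eqref{res_hatM__loc} for the same $(y,t)$. Since $\det\check M\equiv1$, the inverse $\check M^{-1}$ exists, is piecewise meromorphic, and has entries given by the entries of $\check M$ up to sign. We will then consider
\[
N(k):=\hat M(y,t,k)\,\check M^{-1}(y,t,k),\qquad k\in\mathbb C\setminus\Gamma,
\]
and show that $N$ extends to an entire function equal to $I$.

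\emph{Absence of jumps.} On $\Gamma$ we have $\hat M_-=\hat M_+\hat J$ and $\check M_-=\check M_+\hat J$ with the same $\hat J$. Hence
\[
N_-=\hat M_+\hat J\hat J^{-1}\check M_+^{-1}=N_+ .
\]
Together with the assumed continuity of boundary values, this gives analyticity of $N$ across $\Gamma$ by Morera's theorem. At points where $\Gamma$ passes through $0$, or at self-intersections of $\Gamma$, the boundary values are bounded; \eqref{jump_at_ii} ensures this at $k=0$. Such points are therefore removable singularities.

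\emph{Removability at $k_j$, $\bar k_j$.} This is the main step. Near $k_j\in D_1$, the residue condition \eqref{res_hatM_loc} says that the second column of $\hat M$ is analytic at $k_j$ and the first column has a simple pole, with residue $c_j\eul^{2\hat p(k_j)}$ times the second column. Equivalently, $\hat M(k)=\hat L(k)\bigl(I+\tfrac{c_j\eul^{2\hat p(k_j)}}{k-k_j}\sigma_-\bigr)$ with $\hat L$ analytic at $k_j$, where $\sigma_-=\begin{pmatrix}0&0\\1&0\end{pmatrix}$. To check this, right multiplication by the nilpotent factor adds $\frac{c}{k-k_j}$ times the second column to the first column. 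The second column of $\hat M$ is analytic, so $\hat L:=\hat M\bigl(I-\tfrac{c}{k-k_j}\sigma_-\bigr)$ has first column $\hat M^{(1)}-\tfrac{c}{k-k_j}\hat M^{(2)}$, which is analytic by the residue condition.

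The same factorization holds for $\check M$ with the same factor, since $c_j$ and $k_j$ are common to both solutions. Therefore $N=\hat L\check L^{-1}$ near $k_j$. Here $\det\check L=\det\check M=1$, so $\check L^{-1}$ is analytic, and $N$ is analytic at $k_j$. The points $\bar k_j\in D_2$ are handled identically using \eqref{res_hatM__loc}, with $\sigma_+$ in place of $\sigma_-$ and the roles of the columns exchanged. Simplicity of the poles is built into the form of the residue conditions. The set $\{k_j\}$ is finite, being the zero set used in the construction; should it be infinite, accumulation only at infinity causes no harm.

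\emph{Conclusion.} $N$ is entire, and by \eqref{norm-m-hat_loc} it satisfies $N=(I+O(1/k))(I+O(1/k))=I+O(1/k)$ as $k\to\infty$. Liouville's theorem then gives $N\equiv I$, so $\hat M=\check M$.

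The only delicate point is the regularity class in which solutions are sought, namely boundary values continuous or in $L^2$ on $\Gamma$. If the solutions are taken in the $L^2$ sense, one instead argues that $N_\pm-I\in L^2$ and $N$ has no jump, hence $N-I$ is an $L^2$-Cauchy integral of zero and vanishes. I would state the argument in the continuous-boundary-value setting implicit in the paper.
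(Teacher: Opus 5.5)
This is the same argument as the paper's: you form the ratio $\hat M\check M^{-1}$, show that it has no jump and only removable singularities at $k_j,\bar k_j$, and conclude with Liouville's theorem. Your factorization $\hat M=\hat L\,(I+\frac{c}{k-k_j}\sigma_-)$ simply spells out the paper's one-line proof. One correction: boundedness of solutions near $k=0$ (and at self-intersections of $\Gamma$) does not follow from \eqref{jump_at_ii}; it has to be imposed as part of the solution class, since otherwise $(I+A/k)\hat M$ with a constant nilpotent $A$ would be a second solution.
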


\begin{proof}
     The statement follows by considering two solutions $\hat M_1$ and $\hat M_2$ and applying Liouville's theorem to their ratio $\hat M_1 (\hat M_2)^{-1}$.
\end{proof}

Now  assume that the RH problem \eqref{jump-y_loc}--\eqref{res_hatM__loc} 
has a solution $\hat M(y,t,k)$ that satisfies the \emph{symmetries}
\begin{equation}
 \label{sym_1}
 \hat M(k)=\sigma_1\overline{\hat M(\bar k)}\sigma_1
 \end{equation}
and 
 \begin{equation}
 \label{sym_2}
\hat M(k)=\sigma_1\hat M(-k)\sigma_1
 \end{equation} 

 We also assume that its asymptotic
expansions can be differentiated termwise with respect to $y$ and $t$.
More precisely,
\begin{equation}\label{regularity_M_infty}
\hat M(y,t,k)
=
I+\frac{\hat M_1^\pm(y,t)}{k}
+R_\infty^\pm(y,t,k),
\qquad
k\to\infty,\quad k\in\mathbb C^\pm,
\end{equation}
and
\begin{equation}\label{regularity_M_zero}
\hat M(y,t,k)
=
B(y,t)
\left(
I\pm\ii k\Gamma_\pm(y,t)
+k^2G_\pm(y,t)
+R_0^\pm(y,t,k)
\right),
\qquad
k\to0,\quad k\in\mathbb C^\pm.
\end{equation}
The coefficient matrices $\hat M_1^\pm$, $B$, $\Gamma_\pm$, and
$G_\pm$ are assumed to be continuously differentiable with respect
to $y$ and $t$, and the remainders satisfy
\[
\mathcal D R_\infty^\pm(y,t,k)=o(k^{-1}),
\qquad
\mathcal D R_0^\pm(y,t,k)=o(k^2),
\qquad
\mathcal D\in\{1,\partial_y,\partial_t\}.
\]
The estimates are understood locally uniformly in $(y,t)$ and
uniformly for $k$ in closed subsectors of $\mathbb C^\pm$.

Evaluating $\hat M(y,t,k)$ at particular points of $\overline{\mathbb C}$, one can obtain a solution of the SW equation in the $y,t$ variables. We proceed as follows:
\begin{enumerate}[(a)]
\item 
Starting from $\hat M(y,t,k)$, define $2\times 2$-matrix-valued functions 
\[ \hat \Psi (y,t,k):= \hat M(y,t,k)\eul^{-\hat p(y,t,k)\sigma_3},\quad \hat p(y,t, k)=\ii k\left(y-\frac{t}{2k^2}\right).\] 
and show that $\hat\Psi(y,t,k)$ satisfies the system of differential equations:
\begin{equation}\label{Lax-hat-hat}
\begin{split}
\hat\Psi_y&=\doublehat{U}\hat\Psi, \\
\hat \Psi_t&=\doublehat{V}\hat\Psi,
\end{split}
\end{equation}
where $\doublehat{U}$ and $\doublehat{V}$ have the same (rational) dependence on $k$ as in \eqref{Lax_y_y} and \eqref{Lax_y_t}, with coefficients given in terms of $\hat M(y,t,k)$ evaluated at appropriate values of $k$.
\item
Show that the compatibility condition for \eqref{Lax-hat-hat}, i.e., the equality $\doublehat{U}_t - \doublehat{V}_y + [\doublehat{U},\doublehat{V}]=0$, reduces to 
\eqref{SW_in_y}.
\end{enumerate}

\begin{proposition}\label{Prop_Lax_y}
    Let $\hat M(y,t,k)$ be the solution of the RH problem \eqref{jump-y_loc}--\eqref{res_hatM__loc} that satisfies symmetries \eqref{sym_1} and \eqref{sym_2}. Define $\hat \Psi (y,t,k):= \hat M(y,t,k)\eul^{-\hat p(y,t,k)\sigma_3}$. 

    Then $\hat\Psi(y,t,k)$ satisfies the differential equation
\begin{equation}\label{Lax_y_Psi}
    \hat\Psi_y=\doublehat{U}\hat\Psi
\end{equation}
with 
\begin{equation}\label{hat_hat_U}
    \doublehat{U}(y,t,k)=-\ii k \sigma_3 + \hat\eta(y,t)\begin{pmatrix}
    0&1\\1&0
\end{pmatrix},
\end{equation}
where 
\begin{enumerate}[(i)]
    \item 
$\hat \eta(y,t)\in\mathbb{R}$ can be obtained from the large $k$ expansion of $\hat M(y,t,k)$: 
\begin{equation}\label{alpha}
\hat \eta(y,t)=-2\hat m_{12}^\infty(y,t),
\end{equation}
where
\begin{equation}\label{alpha-M}
\hat M(y,t,k)=I+\frac{\ii}{k}\begin{pmatrix}
    \hat m_1^\infty & \hat m_2^\infty\\
    -\hat m_2^\infty& -\hat m_1^\infty
\end{pmatrix}(y,t)+O\left(\frac{1}{k^2}\right),\qquad k\to\infty.
\end{equation}
\end{enumerate}

\end{proposition}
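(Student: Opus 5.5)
The argument is the standard dressing one. Consider $W:=\hat\Psi_y\hat\Psi^{-1}$, which is well defined because $\det\hat M=1$, so $\hat\Psi$ is invertible.

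First I would show that $W$ has no jumps. On $\Gamma$ we have $\hat\Psi_-=\hat M_+\hat J\eul^{-\hat p\sigma_3}=\hat\Psi_+\eul^{\hat p\sigma_3}\hat J\eul^{-\hat p\sigma_3}=\hat\Psi_+\hat J_0$. Since $\hat J_0$ does not depend on $y$, it follows that $(\hat\Psi_y\hat\Psi^{-1})_-=(\hat\Psi_y\hat\Psi^{-1})_+$.

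Next I would show that $W$ is regular at the poles. At $k_j$, write the residue condition as the statement that $\hat\Psi\begin{pmatrix}1&0\\ -c_j/(k-k_j)&1\end{pmatrix}$ is holomorphic near $k_j$. The exponentials combine exactly so that this triangular factor is $y$-independent: $\hat M^{(2)}\eul^{2\hat p}$ paired with $\eul^{-\hat p}$ on column 1 and $\eul^{\hat p}$ on column 2 gives a constant coefficient. Hence $W$ is holomorphic at $k_j$, and in the same way at $\bar k_j$.

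It remains to handle $k=0$ and $k=\infty$. Write $W=\hat M_y\hat M^{-1}-\ii k\,\hat M\sigma_3\hat M^{-1}$. Near $0$, by \eqref{regularity_M_zero} (or by \eqref{jump_at_ii} if $0\in\Gamma$), $\hat M$ and $\hat M^{-1}$ are bounded and differentiable in $y$, so $W$ is bounded and $0$ is removable. At infinity, \eqref{regularity_M_infty} with differentiability in $y$ gives $\hat M_y\hat M^{-1}=o(1)$. Also $\hat M\sigma_3\hat M^{-1}=\sigma_3+\frac1k[\hat M_1,\sigma_3]+o(1/k)$. Thus $W+\ii k\sigma_3$ is entire and bounded, and by Liouville it equals its limit at infinity: $W=-\ii k\sigma_3-\ii[\hat M_1,\sigma_3]$.

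The main obstacle is the justification at infinity, in particular that the limit coincides from $\mathbb C^\pm$. For this I would use the expansion \eqref{alpha-M}, which is asserted uniformly in $k$, and note that the $o(1/k)$ remainder is uniform enough to kill $k\cdot o(1/k)$.

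To finish, I would compute the commutator with $\hat M_1=\ii\begin{pmatrix}\hat m_1^\infty&\hat m_2^\infty\\-\hat m_2^\infty&-\hat m_1^\infty\end{pmatrix}$. We have $[\hat M_1,\sigma_3]=\ii\begin{pmatrix}0&-2\hat m_2^\infty\\-2\hat m_2^\infty&0\end{pmatrix}$, so $-\ii[\hat M_1,\sigma_3]=-2\hat m_2^\infty\sigma_1$. This gives \eqref{hat_hat_U} with $\hat\eta=-2\hat m^\infty_{12}$.

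The structure of $\hat M_1$ (equal diagonal entries of opposite sign and antisymmetric off-diagonal entries) follows from the symmetry \eqref{sym_2}: expanding $\hat M(k)=\sigma_1\hat M(-k)\sigma_1$ gives $\hat M_1=-\sigma_1\hat M_1\sigma_1$. Reality of $\hat\eta$ follows from \eqref{sym_1}: it gives $\hat M_1=-\sigma_1\overline{\hat M_1}\sigma_1$. Combined with the previous relation, this yields $\hat M_1=\overline{\hat M_1}$, so the entries of $\hat M_1$ are real. Since $\hat M_{1,12}=\ii\hat m_2^\infty$, this means $\hat m_2^\infty$ is purely imaginary; the reality of $\hat\eta$ then depends on the reading of $\hat m^\infty_{12}$ in \eqref{alpha}. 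I would fix this normalization by comparing with the Lax form \eqref{Lax_y_y}, where the coefficient of $\sigma_1$ is real.
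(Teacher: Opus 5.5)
Your main argument is the paper's. $W=\hat\Psi_y\hat\Psi^{-1}$ has no jump because $\hat J_0$ is $y$-independent, and no poles at $k_j,\bar k_j$; your $y$-independent triangular factor is a clean way to show this. Liouville applied to $W+\ii k\sigma_3$ then gives $W=-\ii k\sigma_3-\ii[\hat M_1,\sigma_3]=-\ii k\sigma_3-2\hat m_2^\infty\sigma_1$, and your commutator computation is correct. Liouville itself forces the limits from $\mathbb{C}^\pm$ to agree, so that is not a real obstacle.

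The genuine gap is in part (i), the claim $\hat\eta\in\mathbb{R}$. Your proposal concludes that $\hat m_2^\infty$ is purely imaginary, i.e.\ the opposite of the statement. You then propose to repair this by ``fixing the normalization'' against \eqref{Lax_y_y}, and that is not legitimate. First, \eqref{alpha-M} leaves no freedom in what $\hat m_{12}^\infty$ means. Second, \eqref{Lax_y_y} is only known when $\hat M$ comes from an actual solution of the SW equation, which is precisely what this proposition may not assume, so appealing to it is circular.

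The source is a sign slip in the consequence of \eqref{sym_1}. Since $\overline{1/\bar k}=1/k$, one has $\overline{\hat M(\bar k)}=I+\overline{\hat M_1}/k+o(1/k)$, so \eqref{sym_1} gives $\hat M_1=\sigma_1\overline{\hat M_1}\sigma_1$, with no minus sign. Your relation $\hat M_1=-\sigma_1\hat M_1\sigma_1$ from \eqref{sym_2} is correct. Combining the two yields $\overline{\hat M_1}=-\hat M_1$, so $\hat M_1$ is purely imaginary. Hence the matrix multiplying $\ii/k$ in \eqref{alpha-M} is real, $\hat m_2^\infty\in\mathbb{R}$, and $\hat\eta=-2\hat m_2^\infty\in\mathbb{R}$. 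This is exactly the paper's assertion that the coefficients $\hat m^\infty_{ij}$ are real ``due to the symmetries''.

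With this correction your proof is complete. The only remaining difference from the paper is minor. The paper obtains $\hat m^\infty_{12}=-\hat m^\infty_{21}$ by matching the two half-plane expansions of $W$, whereas you read the structure of $\hat M_1$ off \eqref{sym_2}; either works.
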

\begin{proof}
    First, notice that $\hat\Psi(y,t,k)$ satisfies the jump condition
\[
\hat\Psi_-(y,t,k)=\hat\Psi_+(y,t,k)J_0(k)
\]
with the jump matrix $\hat J_0(k)$ independent of $y$. Hence, $\hat\Psi_y(y,t,k)$ satisfies the same jump condition.

As for the residue conditions, we notice that the symmetry assumptions  
\eqref{sym_1} and \eqref{sym_2} imply that if $k_j$ is a pole of $\hat M^{(1)}$ then $-\bar k_j$ is also a pole of $\hat M^{(1)}$ with $c_{k_j}=-\overline{c_{-\bar k_j}}$. Analogously, if $\bar k_j$ is a pole of $\hat M^{(2)}$ then $-k_j$ is also a pole of $\hat M^{(2)}$ with $c_{\bar k_j}=\overline{c_{k_j}}$.

Since $\{c_j\}$ are independent of $y$, 
$\hat\Psi_y(y,t,k)$ satisfies the same residue conditions as $\hat\Psi(y,t,k)$ does:

\begin{align*}
\Res_{k_j}\hat \Psi^{(1)}&= c_j \hat \Psi^{(2)}(k_j) , ~ k_j\in \mathbb{C}^+, \\
\Res_{\bar k_j}\hat \Psi^{(2)}&=\bar{ c}_j \hat \Psi^{(1)}(\bar k_j) , ~ \bar k_j\in \mathbb{C}^-.
\end{align*}
with constants $c_j$ independent of $y$.

Consequently,  $\hat \Psi_y \hat \Psi^{-1}=\hat M_y \hat M^{-1}- \ii k \hat M \sigma_3 \hat M ^{-1}$  has neither jump nor singularities at $k_j$, and thus it is a meromorphic function, with a possible singularity at $k=\infty$.

Let us analyze the behavior of $\hat \Psi_y \hat \Psi^{-1}$ as $k\to\infty$.

\begin{enumerate}[(i)]
    \item As $k\to\infty$, we have

    \begin{equation*}
   \hat M = I+\frac{\hat M_\infty}{k}+o\left(\frac{1}{k}\right),
   \end{equation*}
   where, due to the symmetries \eqref{sym_1} and \eqref{sym_2},
    \begin{equation*}
   \hat M = I+\frac{\ii}{k}\begin{pmatrix}
    \hat m_{11}^\infty & \hat m_{12}^\infty\\
    \hat m_{21}^\infty& \hat m_{22}^\infty
\end{pmatrix}+o(\frac{1}{k}), \quad k\in\mathbb{C}^+
   \end{equation*}
with $\hat m_{+ij}^\infty \in\mathbb{R}$. Moreover, the symmetry \eqref{sym_1} implies
    \begin{equation*}
   \hat M = I-\frac{\ii}{k}\begin{pmatrix}
    \hat m_{22}^\infty & \hat m_{21}^\infty\\
    \hat m_{12}^\infty& \hat m_{11}^\infty
\end{pmatrix}+o(\frac{1}{k}), \quad k\in\mathbb{C}^-.
\end{equation*}
Consequently,  $
\hat M_y= \frac{\hat M_{\infty y}}{k}+o\left(\frac{1}{k}\right)$  and thus 
 $\hat M_y \hat M^{-1}=O(\frac{1}{k})$, which leads to 
the following expansion for $\hat\Psi_y\hat\Psi^{-1}$:

\begin{equation*}\label{Psi_y_Psi_inf_C+}
  \hat \Psi_y \hat \Psi^{-1}=-\ii k \sigma_3 + \begin{pmatrix}
    \hat m_{11}^\infty+\hat m_{22}^\infty&-2\hat m_{12}^\infty\\2\hat m_{21}^\infty&-\hat m_{11}^\infty-\hat m_{22}^\infty
\end{pmatrix}+O(\frac{1}{k}),  \quad k\to \infty, \quad k\in\mathbb{C}^+,
\end{equation*}
\begin{equation*}\label{Psi_y_Psi_inf_C-}
  \hat \Psi_y \hat \Psi^{-1}=-\ii k \sigma_3 - \begin{pmatrix}
    \hat m_{11}^\infty+\hat m_{22}^\infty&-2\hat m_{21}^\infty\\2\hat m_{12}^\infty&-\hat m_{11}^\infty-\hat m_{22}^\infty
\end{pmatrix}+O(\frac{1}{k}),  \quad k\to \infty, \quad k\in\mathbb{C}^-.
\end{equation*}
Since $\hat \Psi_y \hat \Psi^{-1}$ is meromorphic in $\mathbb{C}$ these expansions should coincide and thus, 
$\hat m_{11}^\infty=-\hat m_{22}^\infty$ and $\hat m_{12}^\infty=-\hat m_{21}^\infty$.

Consequently,

\begin{equation}\label{Psi_y_Psi_inf}
  \hat \Psi_y \hat \Psi^{-1}=-\ii k \sigma_3 - 2\hat m_{12}^\infty\begin{pmatrix}
    0&1\\1&0
\end{pmatrix}+O(\frac{1}{k}),  \quad k\to \infty.
\end{equation}

By Liouville's theorem, \eqref{Psi_y_Psi_inf} implies \eqref{Lax_y_Psi}--\eqref{alpha-M}.

\end{enumerate}

\end{proof}

\begin{proposition}\label{Prop_Lax_t}
$\hat\Psi(y,t,k)$ defined as in Proposition \ref{Prop_Lax_y} satisfies the differential equation
\begin{equation}\label{Lax_t_Psi}
    \hat\Psi_t=\doublehat{V}\hat\Psi
\end{equation}
with 
\begin{equation}\label{hat_hat_V}
    \doublehat{V}=-\frac{1}{2\ii k} \begin{pmatrix}
        \hat \alpha^2+\hat \beta^2&-2\hat \alpha \hat \beta \\
        2\hat \alpha \hat \beta&-\hat \alpha^2-\hat \beta^2
    \end{pmatrix},
\end{equation}
where 
\begin{enumerate}[(i)]
    \item 
$\hat \alpha(y,t)\in\mathbb{R}$ and $\hat \beta(y,t)\in\mathbb{R}$ can be obtained  from the expansion of $\hat M(y,t,k)$ as $k\to 0$: 
\begin{equation}\label{alpha_beta-M}
    \hat M(y,t,k)=\begin{pmatrix}
    \hat \alpha &\hat \beta\\
   \hat \beta&\hat \alpha
\end{pmatrix}+
O(k),\quad k\to 0.
\end{equation}
\end{enumerate}

\end{proposition}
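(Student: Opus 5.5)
We follow the scheme of the proof of Proposition \ref{Prop_Lax_y}, now differentiating in $t$. Since $\hat J(y,t,k)=\eul^{-\hat p\sigma_3}\hat J_0(k)\eul^{\hat p\sigma_3}$ with $\hat J_0$ independent of $t$, the function $\hat\Psi=\hat M\eul^{-\hat p\sigma_3}$ has the $t$-independent jump $\hat\Psi_-=\hat\Psi_+\hat J_0$. Hence $\hat\Psi_t$ has the same jump. The residue conditions for $\hat\Psi$ have $t$-independent constants $c_j$, so they are also inherited by $\hat\Psi_t$. Since $\det\hat\Psi=1$, the product
\[
\hat\Psi_t\hat\Psi^{-1}=\hat M_t\hat M^{-1}+\hat p_t\,\hat M\sigma_3\hat M^{-1}\cdot(-1),\qquad \hat p_t=-\frac{1}{2\ii k}\cdot\ii k\cdot\frac{1}{k}\cdot k=-\frac{\ii}{2k},
\]
that is, $\hat\Psi_t\hat\Psi^{-1}=\hat M_t\hat M^{-1}+\frac{\ii}{2k}\hat M\sigma_3\hat M^{-1}$. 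This function has no jump across $\Gamma$ and removable singularities at $k_j,\bar k_j$. It is therefore meromorphic on $\mathbb C$, with possible singularities only at $k=0$ and $k=\infty$.

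Next we identify the behavior at the two special points.

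\emph{At $k=\infty$:} by \eqref{regularity_M_infty}, differentiated termwise in $t$, we have $\hat M_t\hat M^{-1}=O(1/k)$ and $\hat M\sigma_3\hat M^{-1}=\sigma_3+O(1/k)$. Hence $\hat\Psi_t\hat\Psi^{-1}=O(1/k)\to0$.

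\emph{At $k=0$:} by \eqref{regularity_M_zero}, $\hat M=B(I+O(k))$, and termwise differentiability gives $\hat M_t\hat M^{-1}=B_tB^{-1}+O(k)$. Thus
\[
\hat\Psi_t\hat\Psi^{-1}=\frac{\ii}{2k}B\sigma_3B^{-1}+O(1),
\]
so there is at most a simple pole at $0$. Liouville's theorem applied to $\hat\Psi_t\hat\Psi^{-1}-\frac{\ii}{2k}B\sigma_3B^{-1}$, which is entire and tends to $0$ at infinity, yields
\[
\hat\Psi_t\hat\Psi^{-1}=\frac{\ii}{2k}B\sigma_3B^{-1}=-\frac{1}{2\ii k}B\sigma_3B^{-1}.
\]

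It remains to compute $B$ and $B\sigma_3B^{-1}$. The symmetry \eqref{sym_2} at $k=0$ gives $B=\sigma_1B\sigma_1$, so $B=\begin{pmatrix}\hat\alpha&\hat\beta\\ \hat\beta&\hat\alpha\end{pmatrix}$. The symmetry \eqref{sym_1} gives $B=\sigma_1\bar B\sigma_1$; combined with the previous relation this shows $\hat\alpha,\hat\beta\in\mathbb R$. One point needs care here: the expansions \eqref{regularity_M_zero} from $\mathbb C^\pm$ share the same leading term $B$. This is built into \eqref{regularity_M_zero}, since it uses a single $B$ for both half-planes, and it is consistent with $\hat J_0(k)=I+O(k)$ in \eqref{jump_at_ii}. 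Since $\det\hat M=1$, we get $\hat\alpha^2-\hat\beta^2=1$, so $B^{-1}=\begin{pmatrix}\hat\alpha&-\hat\beta\\-\hat\beta&\hat\alpha\end{pmatrix}$. A direct multiplication then gives
\[
B\sigma_3B^{-1}=\begin{pmatrix}\hat\alpha^2+\hat\beta^2&-2\hat\alpha\hat\beta\\ 2\hat\alpha\hat\beta&-\hat\alpha^2-\hat\beta^2\end{pmatrix},
\]
which is exactly \eqref{hat_hat_V}.

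The main technical point is the behavior at $k=0$: justifying that $\hat M_t\hat M^{-1}$ stays bounded near $0$. This relies on the assumed termwise $t$-differentiability of the expansion at $k=0$ in each half-plane. The apparent discontinuity between the $\mathbb C^+$ and $\mathbb C^-$ expansions does not matter, because only the common leading matrix $B$ enters the residue at $0$.
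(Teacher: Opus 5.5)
Your proof is correct and is essentially the paper's argument: $\hat\Psi_t\hat\Psi^{-1}=\hat M_t\hat M^{-1}+\frac{\ii}{2k}\hat M\sigma_3\hat M^{-1}$ has no jump and no poles at the $k_j$, is $O(1/k)$ at infinity, and has at most a simple pole at $0$ with singular part $\frac{\ii}{2k}B\sigma_3B^{-1}$, so Liouville's theorem together with the symmetry-determined form of $B$ (real, $\sigma_1$-invariant, $\det B=1$) yields \eqref{hat_hat_V}. The one blemish is that the intermediate chain in your formula for $\hat p_t$ does not evaluate as written, but the value you actually use, $\hat p_t=-\ii/(2k)$, is correct.
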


\begin{proof}
    Similarly to Proposition \ref{Prop_Lax_y}, we notice that $\hat\Psi_t \hat\Psi^{-1}=\hat M_t\hat M^{-1}+\frac{\ii}{2k}\hat M\sigma_3\hat M^{-1}$ has neither jump nor poles at $ k_j$, and thus it is a meromorphic function, with possible singularities at $k=\infty$ and $k=0$, the latter being due to the singularity of $\hat p_t$ at $k=0$:

Evaluating $\hat\Psi_t\hat\Psi^{-1}$ near these points, we have the following.
\begin{enumerate}[(i)]

\item As $k\to\infty$, we have $\hat M_t\hat M^{-1}=O(\frac{1}{k})$, $p_t(k)=O(\frac{1}{k})$ and thus
\begin{equation}\label{psi-inf-t}
\hat\Psi_t\hat\Psi^{-1}(k)=\ord(k^{-1}),\qquad k\to\infty.
\end{equation}

\item As $k\to 0$, the asymptotic behavior $J_0(k)=I+O(k)$ as $k\to0$ together with the symmetries \eqref{sym_1} and  \eqref{sym_2},  leads to the following expansions for $\hat M$ near $k=0$ in $\mathbb{C}^+$ and $\mathbb{C}^-$:

\begin{align}\label{hatM_at_0_pl}
    &\hat M =B\left(I+\ii k\Gamma_++k^2 G_+ +o(k^2)\right),\quad k\to 0,\quad k\in\mathbb{C}^+,\\\label{hatM_at_0_min}
    &\hat M =B\left(I-\ii k\Gamma_-+k^2 G_-+o(k^2)\right),\quad k\to 0,\quad k\in\mathbb{C}^-,
\end{align}
where
\begin{equation*}
    B=\begin{pmatrix}
        \hat \alpha&\hat \beta\\
        \hat \beta&\hat \alpha
    \end{pmatrix},\quad 
    \Gamma_+=\begin{pmatrix}
        \hat f_{1}&\hat f_{2}\\
        \hat f_{3}&\hat f_{4}
    \end{pmatrix},\quad 
    \Gamma_-=\begin{pmatrix}
    \hat f_{4}&\hat f_{3}\\
    \hat f_{2}&\hat f_{1}
    \end{pmatrix}, \quad 
    G_+=\begin{pmatrix}
        \hat g_{1}&\hat g_{2}\\
        \hat g_{3}&\hat g_{4}
    \end{pmatrix},\quad 
    G_-=\begin{pmatrix}
    \hat g_{4}&\hat g_{3}\\
    \hat g_{2}&\hat g_{1}
    \end{pmatrix}.
\end{equation*}
with $\hat \alpha\in\mathbb{R}$, $\hat \beta\in\mathbb{R}$, $\hat f_{j}$, and $\hat g_{j}\in\mathbb{R}$.

Therefore, we have
\begin{align}\label{psi_t_psi_0}
  &  \hat \Psi_t\hat \Psi^{-1}=\frac{\ii}{2 k}B\sigma_3B^{-1}+\left(B_tB^{-1}-\frac{1}{2}B\left(\pm \sigma_3\Gamma_{\pm}^{\sharp}\pm\Gamma_{\pm}\sigma_3\right)B^{-1}\right)+\\
    &\ii k\left(\pm B_t\Gamma_{\pm}\pm B \Gamma_{\pm t}\pm B_t \Gamma_{\pm}^{\sharp}+\frac{1}{2}B\left[\sigma_3G_{\pm}^{\sharp}+G_{\pm}\sigma_3-\Gamma_{\pm}\sigma_3\Gamma_{\pm}^{\sharp}\right]\right)B^{-1}+O(k^2),\quad k\to 0, \quad k\in\mathbb{C}^\pm,
\end{align}
where, for a $2\times 2$ matrix $A=\begin{pmatrix}
    a&b\\c&d
\end{pmatrix}$, we denote its adjugate by $A^\sharp=\begin{pmatrix}
    d&-b\\-c&a
\end{pmatrix}$.

In particular,
\begin{equation}\label{Psi_t_Psi_0}
  \hat \Psi_t \hat \Psi^{-1}=-\frac{1}{2\ii k} \begin{pmatrix}
\hat \alpha^2+ \hat \beta^2&-2 \hat \alpha \hat \beta\\2 \hat \alpha \hat \beta&-\hat \alpha^2-\hat \beta^2
\end{pmatrix}+O(1),  \quad k\to 0.
\end{equation}

Now, \eqref{psi-inf-t} and \eqref{Psi_t_Psi_0} together with  Liouville's theorem imply \eqref{Lax_t_Psi}--\eqref{alpha_beta-M}.

\end{enumerate}

\end{proof}

The next Proposition follows directly from the compatibility of \eqref{Lax_y_Psi} and \eqref{Lax_t_Psi}
\begin{equation*}\label{compat}
\doublehat{U}_t - \doublehat{V}_y + [\doublehat{U},\doublehat{V}]=0.
\end{equation*}

\begin{proposition}
Let $\hat\alpha(y,t)$, $\hat\beta(y,t)$, $\hat \eta(y,t)$ be the functions determined in terms of $\hat M(y,t,k)$ as in Propositions \ref{Prop_Lax_y} and \ref{Prop_Lax_t}. Then they satisfy the following equations:
\begin{subequations}\label{rel}
\begin{align}\label{rel-a}
&\hat\eta_{ t}= 2\hat\alpha\hat\beta;\\
\label{rel-b}
&(\hat\alpha^2+\hat\beta^2)_y=4\hat\eta\hat\alpha\hat\beta;\\
\label{rel-c}
&(\hat\alpha\hat\beta)_y=\hat\eta(\hat\alpha^2+\hat\beta^2).
\end{align}
\end{subequations}
\end{proposition}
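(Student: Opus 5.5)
The plan is to substitute the explicit forms \eqref{hat_hat_U} and \eqref{hat_hat_V} into the zero-curvature condition
\[
\doublehat{U}_t-\doublehat{V}_y+[\doublehat{U},\doublehat{V}]=0
\]
and read off the coefficients of the powers of $k$ and of the Pauli matrices.

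\textbf{Step 1: the zero-curvature condition holds.} By Propositions \ref{Prop_Lax_y} and \ref{Prop_Lax_t}, $\hat\Psi$ satisfies both $\hat\Psi_y=\doublehat{U}\hat\Psi$ and $\hat\Psi_t=\doublehat{V}\hat\Psi$. Differentiation under these equations is justified by the assumed regularity \eqref{regularity_M_infty}--\eqref{regularity_M_zero}. Hence $\hat\Psi_{yt}=\hat\Psi_{ty}$ gives
\[
\left(\doublehat{U}_t-\doublehat{V}_y+[\doublehat{U},\doublehat{V}]\right)\hat\Psi=0 .
\]
Since $\det\hat\Psi=\det\hat M=1$, $\hat\Psi$ is invertible, and the bracket vanishes identically in $k$.

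\textbf{Step 2: rewrite the coefficients in Pauli form.} Since $\ii\sigma_2=\begin{pmatrix}0&1\\-1&0\end{pmatrix}$, we have
\[
\doublehat{U}=-\ii k\sigma_3+\hat\eta\,\sigma_1,\qquad
\doublehat{V}=-\frac{1}{2\ii k}W,\qquad
W:=(\hat\alpha^2+\hat\beta^2)\sigma_3-2\hat\alpha\hat\beta\,\ii\sigma_2 .
\]
The needed commutators are
\[
[\sigma_3,\ii\sigma_2]=2\sigma_1,\qquad [\sigma_1,\sigma_3]=-2\ii\sigma_2,\qquad [\sigma_1,\ii\sigma_2]=-2\sigma_3 .
\]
Therefore
\[
[\sigma_3,W]=-4\hat\alpha\hat\beta\,\sigma_1,\qquad
[\sigma_1,W]=-2(\hat\alpha^2+\hat\beta^2)\ii\sigma_2+4\hat\alpha\hat\beta\,\sigma_3 .
\]

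\textbf{Step 3: split by powers of $k$.} The commutator decomposes as
\[
[\doublehat{U},\doublehat{V}]=\tfrac12[\sigma_3,W]-\frac{\hat\eta}{2\ii k}[\sigma_1,W].
\]
\begin{enumerate}[(a)]
\item The $k^0$ terms are $\hat\eta_t\sigma_1-2\hat\alpha\hat\beta\,\sigma_1=0$. This is \eqref{rel-a}.
\item The $k^{-1}$ terms give $W_y=\hat\eta[\sigma_1,W]$. Comparing the $\sigma_3$ and $\ii\sigma_2$ components yields
\[
(\hat\alpha^2+\hat\beta^2)_y=4\hat\eta\hat\alpha\hat\beta,\qquad
-2(\hat\alpha\hat\beta)_y=-2\hat\eta(\hat\alpha^2+\hat\beta^2).
\]
These are \eqref{rel-b} and \eqref{rel-c}.
\end{enumerate}

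\textbf{Main obstacle.} No genuine analytic obstacle is expected. The only delicate point is bookkeeping of signs in the commutators: an error there would flip \eqref{rel-a} to $\hat\eta_t=-2\hat\alpha\hat\beta$. For this reason I would double-check $[\sigma_3,\ii\sigma_2]=2\sigma_1$ directly with explicit matrices. The regularity needed to differentiate $\hat\alpha,\hat\beta,\hat\eta$ is exactly what \eqref{regularity_M_infty}--\eqref{regularity_M_zero} provide.
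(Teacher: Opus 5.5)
Your proposal is correct and follows the paper's own argument, which derives \eqref{rel} directly from the compatibility condition $\doublehat{U}_t-\doublehat{V}_y+[\doublehat{U},\doublehat{V}]=0$. Your Pauli-matrix bookkeeping is right: the $k^0$ terms give $\hat\eta_t=2\hat\alpha\hat\beta$, and the $\sigma_3$ and $\ii\sigma_2$ components of the $k^{-1}$ terms give \eqref{rel-b} and \eqref{rel-c}.
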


We now derive several additional relations among the coefficients $\hat \alpha$, $\hat \beta$, $\hat f_1$, $\hat f_2$, $\hat g_1$, $\hat g_2$, and $\hat g_4$.

 \begin{enumerate}

\item Determinant relation implies \begin{equation}
    \label{rel_1}
    \hat \alpha^2-\hat\beta^2=1
\end{equation}
    \item Since $\hat \Psi_t \hat \Psi^{-1}$ is meromorphic in $\mathbb{C}$, its expansions near $k=0$ in $\mathbb{C}^+$ and $\mathbb{C}^-$ should coincide. Consequently, \[ \sigma_3\Gamma_{+}^{\sharp}+\Gamma_{+}\sigma_3=- \sigma_3\Gamma_{-}^{\sharp}-\Gamma_{-}\sigma_3,\] which implies
   $ \hat f_{4}=-\hat f_{1}$,  $ \hat f_{3}=-\hat f_{2}$, and thus
    \begin{equation}
        \label{Gamma_pm}
    \Gamma:=\Gamma_+=-\Gamma_-,\quad \Gamma^{\sharp}=-\Gamma.
    \end{equation}
    Moreover, \eqref{psi_t_psi_0} together with \eqref{Gamma_pm} implies
    \[
    B_tB^{-1}-\frac{1}{2}B\left( \sigma_3\Gamma_{}^{\sharp}+\Gamma_{}\sigma_3\right)B^{-1}=0,
    \]
    and thus
    \begin{align}\label{rel_2}
        \hat \alpha_t=-\hat\beta\hat f_2,\\\label{rel_3}
        \hat \beta_t=-\hat\alpha\hat f_2.
    \end{align}

 \item  Similarly,
 \[ \sigma_3G_{+}^{\sharp}+G_{+}\sigma_3= \sigma_3G_{-}^{\sharp}+G_{-}\sigma_3,\] which implies
   $ \hat g_{2}=\hat g_{3}$.
 In particular, together with
\eqref{Gamma_pm}, this yields
\begin{subequations}\label{hat_M_near_0}
\begin{equation}
    \hat M^{(xt)}
=
\begin{pmatrix}
\hat\alpha&\hat\beta\\
\hat\beta&\hat\alpha
\end{pmatrix}
\left(
I+\ii k\Gamma+k^2G+o(k^2)
\right),\qquad k\to 0, \quad k\in\mathbb{C^+},
\end{equation}
where
\begin{equation}
    \Gamma=
\begin{pmatrix}
\hat f_1&\hat f_2\\
-\hat f_2&-\hat f_1
\end{pmatrix},
\qquad
G=
\begin{pmatrix}
\hat g_1&\hat g_2\\
\hat g_2&\hat g_4
\end{pmatrix}.
\end{equation}
\end{subequations}
Moreover, from \eqref{psi_t_psi_0}  with \eqref{Gamma_pm} we get, comparing terms of order $k$, the equations
  \begin{align}\label{rel_4}
     \hat f_{1t}+\frac{1}{2}\left(\hat g_1+\hat g_4+\hat f_1^2+\hat f_2^2\right)=0,\\\label{rel_5}
        \hat f_{2t}-\hat g_2+\hat f_1\hat f_2=0.
    \end{align}

\item Substituting \eqref{hatM_at_0_pl} and \eqref{hatM_at_0_min} into $ \Psi_y\Psi^{-1}=\hat M_y \hat M^{-1}-\ii k \hat M \sigma_3 \hat M ^{-1}$, we arrive at
\begin{align*}
    &\Psi_y\Psi^{-1}=B_yB^{-1}+\ii k\left(\pm B_y \Gamma_{\pm}^\sharp B^{-1}\pm B_y\Gamma_{\pm}B^{-1}\pm B\Gamma_{\pm y}B^{-1}-B\sigma_3B^{-1}\right)+\\
    &+k^2\left(B_y\left[G_{\pm}^\sharp-\Gamma_{\pm}\Gamma_{\pm}^\sharp+G_{\pm}\right]-B\left[\Gamma_{\pm y}\Gamma_{\pm }^\sharp-G_{\pm  y}\mp \sigma_3\Gamma_{\pm}^\sharp\mp\Gamma_{\pm}\sigma_3\right]\right)B^{-1}+o(k^2),\quad k\to 0, \quad k\in\mathbb{C}^\pm.
\end{align*}
Comparing this with \eqref{hat_hat_U} we obtain
\begin{align*}
 &B_yB^{-1}=\hat \eta\begin{pmatrix}
     0&1\\
     1&0
 \end{pmatrix},\\
 & \Gamma_y-\sigma_3=-B^{-1}\sigma_3B,\\
 &B_y\left[G_{\pm}^\sharp-\Gamma\Gamma^\sharp+G_{\pm}\right]-B\left[\Gamma_{y}\Gamma^\sharp-G_{\pm  y}- \sigma_3\Gamma^\sharp-\Gamma\sigma_3\right]=0.
\end{align*}
Therefore,
    \begin{align}\label{rel_6}
        &\hat \alpha\hat\beta_y-\hat\beta\hat \alpha_y=\hat\eta,\\\label{rel_7}
       &\hat f_{1y}=-2\hat\beta^2,\\\label{rel_8}
       &\hat f_{2y}=-2\hat\beta\hat\alpha,\\\label{rel_9}
      & \hat g_{1y}=\hat g_{4y},\\\label{rel_10}
      & \hat g_{1y}=\frac{1}{2}\left(\hat f_2^2-\hat f_1^2\right)_y=2\hat\beta^2\hat f_1-2\hat\beta\hat\alpha\hat f_2,\\\label{rel_11}
    &  \hat\eta \left(\hat g_1+\hat g_4+\hat f_1^2-\hat f_2^2\right)+\hat g_{2y}-2\hat f_2+\hat f_2\hat f_{1y}-\hat  f_1\hat f_{2y}=0.
    \end{align}
      
\item The determinant relation implies (comparing the term of order $k^2$)
\begin{equation}
    \label{rel_12}
    \hat g_1+\hat g_4+\hat f_1^2-\hat f_2^2=0.
\end{equation}
\end{enumerate}

The preceding relations allow us to recover the SW equation directly from the coefficients in the local expansion of the solution of the RH problem \eqref{jump-y_loc}--\eqref{res_hatM__loc}.

\begin{theorem}
    Introducing \begin{equation}\label{hmbbgg}
  \hat m(y,t)=(\hat \alpha(y,t)+\hat{\beta}(y,t))^4-1,\qquad \hat v(y,t)= 2\hat f_2 (y,t) ,\qquad \hat u=\hat g_2-\hat f_1\hat f_2+\hat f_2^2,
    \end{equation} Equations \eqref{rel}--\eqref{rel_12} reduce to \eqref{SW_in_y}.
\end{theorem}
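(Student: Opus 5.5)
The plan is to verify the three equations of \eqref{SW_in_y} one at a time. Each check is a direct algebraic substitution of the relations \eqref{rel}--\eqref{rel_12}, organised around the single quantity
\[
s(y,t):=\hat\alpha+\hat\beta .
\]

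\emph{Preliminary identities.} By the determinant relation \eqref{rel_1}, $(\hat\alpha+\hat\beta)(\hat\alpha-\hat\beta)=1$, so $\hat\alpha-\hat\beta=s^{-1}$. From the definition \eqref{hmbbgg} of $\hat m$ we get $\sqrt{\hat m+1}=s^2$ and $1/\sqrt{\hat m+1}=(\hat\alpha-\hat\beta)^2$. Since $s^4-1=s^2\big((\hat\alpha+\hat\beta)^2-(\hat\alpha-\hat\beta)^2\big)$, we also have
\[
\hat m=4\hat\alpha\hat\beta\,s^2 .
\]

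\emph{Equation \eqref{SW_in_y-1}.} Adding \eqref{rel_2} and \eqref{rel_3} gives $s_t=-\hat f_2 s$. Hence $(s^2)_t=-2\hat f_2 s^2=-\hat v\sqrt{\hat m+1}$, which is \eqref{SW_in_y-1}.

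\emph{Equation \eqref{SW_in_y-3}.} By \eqref{rel_8}, $\hat v_y=2\hat f_{2y}=-4\hat\alpha\hat\beta$. Combined with the expression for $\hat m$ above, this gives $-\hat v_y\sqrt{\hat m+1}=4\hat\alpha\hat\beta s^2=\hat m$, which is \eqref{SW_in_y-3}.

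\emph{Equation \eqref{SW_in_y-2}.} This is the step that needs the most care, since it uses the order-$k^2$ relations.
\begin{itemize}
\item First insert \eqref{rel_12} into \eqref{rel_11}. This kills the $\hat\eta$ term and leaves $\hat g_{2y}=2\hat f_2-\hat f_2\hat f_{1y}+\hat f_1\hat f_{2y}$.
\item Differentiating $\hat u=\hat g_2-\hat f_1\hat f_2+\hat f_2^2$ in $y$ and substituting this expression for $\hat g_{2y}$, the $\hat f_1\hat f_{2y}$ terms cancel. We obtain $\hat u_y=2\hat f_2\,(1-\hat f_{1y}+\hat f_{2y})$.
\item Using \eqref{rel_7}, \eqref{rel_8} and then \eqref{rel_1}, the bracket becomes $1+2\hat\beta^2-2\hat\alpha\hat\beta=\hat\alpha^2-\hat\beta^2+2\hat\beta^2-2\hat\alpha\hat\beta=(\hat\alpha-\hat\beta)^2=s^{-2}$.
\item Therefore $\hat u_y\sqrt{\hat m+1}=2\hat f_2=\hat v$, which is \eqref{SW_in_y-2}.
\end{itemize}

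The main obstacle is only bookkeeping: using \eqref{rel_12} to remove the $\hat\eta$-dependent term in \eqref{rel_11}, and recognising the perfect square $(\hat\alpha-\hat\beta)^2$. The relations \eqref{rel}, \eqref{rel_4}, \eqref{rel_5}, \eqref{rel_9} and \eqref{rel_10} are not needed for the three equations. They can be kept as consistency checks; for example, \eqref{rel-a} together with \eqref{rel_6} is consistent with $\hat\eta=\tfrac14\hat m_y/(\hat m+1)$, in agreement with \eqref{Lax_y_y}.
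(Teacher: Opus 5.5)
Your proof is correct and follows the paper's argument essentially line by line. It uses \eqref{rel_2}--\eqref{rel_3} for the $t$-equation, \eqref{rel_8} with \eqref{rel_1} for $\hat m=-\hat v_y\sqrt{\hat m+1}$, and \eqref{rel_11}--\eqref{rel_12} with \eqref{rel_7}--\eqref{rel_8} to obtain $\hat u_y=2\hat f_2(\hat\alpha-\hat\beta)^2$. Your expression for $\hat u_y$, with the term $-\hat f_1\hat f_{2y}$, is the correct one; the paper's displayed formula has a sign slip there, and the correct sign is exactly what makes the cancellation you describe work.
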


\begin{proof}

Since, $\hat m(y,t)=(\hat \alpha(y,t)+\hat{\beta}(y,t))^4-1$, it follows that
\[
\sqrt{\hat m+1}=(\hat \alpha+\hat{\beta})^2.
\]
Differentiating this equation with respect to $t$ and using \eqref{rel_2} and \eqref{rel_3}, we obtain
\[
\left(\sqrt{\hat m+1}\right)_t=-2\hat f_2(\hat \alpha+\hat{\beta})^2,
\]
which proves \eqref{SW_in_y-1}.

Next, differentiating $\hat v$ with respect to $y$ and using  \eqref{rel_8} and \eqref{rel_1}, we arrive at
\[
-\hat v_y\sqrt{\hat m+1}=\hat m,
\]
which is exactly \eqref{SW_in_y-3}.

Finally, differentiating $\hat u$ with respect to $y$ yields
\[
\hat u_y=\hat g_{2y}-\hat f_{1y}\hat f_2+\hat f_{2y}\hat f_1+2\hat f_{2y}\hat f_2.
\]
Moreover, combining \eqref{rel_11} and \eqref{rel_12}, we obtain
\[
\hat g_{2y}=2\hat f_2-\hat f_2\hat f_{1y}+\hat  f_1\hat f_{2y}.
\]
Substituting this expression into the formula for $\hat u_y $ and then using \eqref{rel_7} and \eqref{rel_8}, we arrive at
\[
\hat u_y=2\hat f_2(\hat\alpha-\hat\beta)^2,
\]
which proves \eqref{SW_in_y-2}.
    
\end{proof}

\begin{remark}
  Notice that \eqref{hmbbgg} implies $\hat m(y,t)+1=(\hat \alpha(y,t)+\hat{\beta}(y,t))^4>0$. Indeed, since $(\hat \alpha -\hat \beta)(\hat \alpha +\hat \beta)=1$, it follows that   $\hat \alpha +\hat \beta\neq 0$ and thus $\hat m +1 >0$.
  
  Therefore, the positivity condition required in the direct spectral analysis is guaranteed automatically, by the construction based on the solution of the Riemann--Hilbert problem. In turn, it guarantees the analytic properties of the associated eigenfunctions.
\end{remark}

Now, introduce the reverse change of variable $(y,t)\mapsto (x,t)$ by
\begin{equation}\label{x_y_}  
x(y,t)=
\int_{\eta(t)}^y
\tfrac{\dd \xi}
{\left(\hat\alpha(\xi,t)+\hat\beta(\xi,t)\right)^2}
= 
\int_{\eta(t)}^y
\tfrac{\dd \xi}{\sqrt{\hat m(\xi,t)+1}},
\end{equation}
so that $x(\eta(t),t)=0$ and $x_y(y,t)=\tfrac{1}{\sqrt{\hat m(\xi,t)+1}}$. Furthermore, differentiating \eqref{x_y_} with respect to 
$t$ and using \eqref{cons_y}, we obtain \eqref{x_t}.
Then $u(x,t)=\hat u(y(x,t),t)$ and $m(x,t)=\hat m(y(x,t),t)$  satisfy \eqref{SW} by Proposition \ref{prop:hatu_u}.

\begin{remark}
    In contrast to other Camassa--Holm-type equations, such as the Short-Pulse equation \cite{BSL17} and the modified Camassa--Holm equation \cite{BKS20}, the change of spatial variable used here does not introduce additional singularities in the physical variables. Indeed, the map $(y,t)\mapsto (x,t)$ remains a smooth diffeomorphism for each $t$. Consequently, a solution that is smooth in $(y,t)$ remains smooth after reconstruction in the original variables $(x,t)$. For the Short-Pulse and modified Camassa--Holm equations, in contrast, the corresponding transformation may lose invertibility, causing a smooth  solution in $(y,t)$ to produce wave breaking or a nonsmooth or multivalued solution in physical coordinates $(x,t)$.
\end{remark}

Now let us discuss sufficient conditions on the contour $\Gamma$,  jump matrix $\hat J_0(k)$, and residue conditions that provide the symmetries \eqref{sym_1} and \eqref{sym_2}.

\begin{proposition}\label{prop:sym}
    Let the contour $\Gamma$ be invariant w.r.t. $k\mapsto-k$ and $k\mapsto-\bar k$, the jump matrix $\hat J_0$ satisfy the symmetries
       \begin{equation*}
 \label{sym_J_1}
 \hat J_0(k)=\overline{\hat J_0(-\bar k)}
 \end{equation*}
and 
 \begin{equation*}
 \label{sym_J_2}
\hat J_0^{{-1}}(k)=\sigma_1 \hat J_0(-k)\sigma_1,
 \end{equation*} 
 and the parameters of the residue conditions satisfy $c_{k_j}=-\overline{c_{-\bar k_j}}$ and $c_{k_j}=\overline{c_{\bar k_j}}$.

    Then the solution $\hat M$ of the RH problem  \eqref{jump-y_loc}--\eqref{res_hatM__loc}   satisfies symmetries \eqref{sym_1} and \eqref{sym_2}
\end{proposition}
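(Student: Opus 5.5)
The plan is to use the uniqueness result, Proposition~\ref{prop:uniqness}. For each of the two symmetries I will build from $\hat M$ a transformed function $\check M$ and check that $\check M$ solves the same RH problem \eqref{jump-y_loc}--\eqref{res_hatM__loc}. Uniqueness then gives $\check M=\hat M$, which is exactly \eqref{sym_1} or \eqref{sym_2}. The candidates are
\[
\check M_1(k):=\sigma_1\overline{\hat M(\bar k)}\sigma_1,\qquad \check M_2(k):=\sigma_1\hat M(-k)\sigma_1 .
\]
Neither map affects the normalization: $\check M_i=I+O(1/k)$ follows at once from \eqref{norm-m-hat_loc}. Analyticity off $\Gamma$ and off the pole set is preserved, because $\Gamma$ is assumed invariant under $k\mapsto -k$ and $k\mapsto-\bar k$, hence also under $k\mapsto\bar k$, and the pole set $\{k_j\}\cup\{\bar k_j\}$ is invariant as well.

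First I treat the jumps. Since $\hat p^*=-\hat p$ and $\hat p(-k)=-\hat p(k)$, conjugating by $\sigma_1$ flips $\sigma_3$, and I get
\[
\sigma_1\overline{\eul^{\mp\hat p(\bar k)\sigma_3}}\sigma_1=\eul^{\mp\hat p(k)\sigma_3},\qquad \sigma_1\eul^{\mp\hat p(-k)\sigma_3}\sigma_1=\eul^{\mp\hat p(k)\sigma_3}.
\]
So everything reduces to the $k$-independent-in-$(y,t)$ part $\hat J_0$. For $\check M_2$ the map $k\mapsto -k$ reverses the orientation of $\Gamma$ and exchanges $D_1$ and $D_2$, so the $\pm$ sides swap. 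From $\hat M_-(-k)=\hat M_+(-k)\hat J(-k)$ I obtain $(\check M_2)_+(k)=(\check M_2)_-(k)\,\sigma_1\hat J(-k)\sigma_1$. The jump condition for $\check M_2$ therefore holds if and only if $\sigma_1\hat J_0(-k)\sigma_1=\hat J_0^{-1}(k)$, which is the hypothesis.

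For $\check M_1$ I intend to go through the composite map $k\mapsto-\bar k$. This map preserves the orientation and the sides of a contour that is invariant under it, for instance $\mathbb R$ oriented left to right together with the circle. The combined symmetry $\hat M(k)\mapsto \overline{\hat M(-\bar k)}$ then requires $\hat J_0(k)=\overline{\hat J_0(-\bar k)}$, which is the first hypothesis. Composing this with the $\check M_2$ symmetry gives \eqref{sym_1}. Concretely, I will prove the two symmetries $\hat M(k)=\sigma_1\hat M(-k)\sigma_1$ and $\hat M(k)=\overline{\hat M(-\bar k)}$ by uniqueness, and then combine them.

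Next I treat the residue conditions, checking \eqref{res_hatM_loc}--\eqref{res_hatM__loc} for the transformed functions. For $k\mapsto -k$, the first column of $\check M_2$ at $-\bar k_j$ is $\sigma_1$ times the second column of $\hat M$ at $\bar k_j$. Taking a residue picks up a sign under $k\mapsto -k$, and $\eul^{2\hat p}$ becomes $\eul^{-2\hat p}$ at the reflected point. Together these give the constant relation $c_{k_j}=\overline{c_{\bar k_j}}$ after relabeling the poles. For $k\mapsto-\bar k$, conjugating the residue yields $\overline{c}$ with a sign from the Jacobian of the reflection, so I need $c_{-\bar k_j}=-\overline{c_{k_j}}$. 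This is the stated hypothesis, and it uses the invariance of $\{k_j\}$ under $k\mapsto-\bar k$.

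I expect the main obstacle to be the bookkeeping of orientation and sides, $\pm$, on each piece of $\Gamma$ under the two reflections. This matters especially on the circle $|k|=\epsilon$ and on $\mathbb R$, where the orientation in Figure~\ref{fig:contour_RH_x} changes between the segments. I will handle it piece by piece: on each arc I verify that the reflection sends the $+$ side to the $+$ side (for $k\mapsto-\bar k$) or to the $-$ side (for $k\mapsto-k$), and that the hypotheses on $\hat J_0$ are stated with exactly this convention. The residue signs are a secondary, routine check.
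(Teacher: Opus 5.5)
Your proposal is correct and follows the paper's proof: show that $\overline{\hat M(-\bar k)}$ and $\sigma_1\hat M(-k)\sigma_1$ solve the same RH problem \eqref{jump-y_loc}--\eqref{res_hatM__loc}, conclude $\hat M(k)=\overline{\hat M(-\bar k)}=\sigma_1\hat M(-k)\sigma_1$ by Proposition~\ref{prop:uniqness}, and combine the two to obtain \eqref{sym_1}. Two small bookkeeping corrections: first, the residue check under $k\mapsto-k$ gives $c_{-\bar k_j}=-\overline{c_{k_j}}$, not $c_{k_j}=\overline{c_{\bar k_j}}$; this is the same relation as for $k\mapsto-\bar k$, because \eqref{res_hatM__loc} already carries $\bar c_j$. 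Second, $k\mapsto-\bar k$ reverses the direction of traversal of $\Gamma$ rather than preserving it, but it does preserve the $\pm$ sides, which is all the boundary-value argument needs.
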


\begin{proof} The claim follows from the uniqueness of the solution to the RH problem
\eqref{jump-y_loc}--\eqref{res_hatM__loc}. Indeed, one verifies that
$\overline{\hat M(-\bar k)}$
and
$\sigma_1
\hat M(-k)
\sigma_1$
satisfy the same jump conditions, residue conditions, and normalization as
\(\hat M(k)\). By uniqueness, it follows that
\[
\hat M(k)=\overline{\hat M(-\bar k)}
=
\sigma_1
\hat M(-k)
\sigma_1,
\]
which proves the stated symmetries.
\end{proof}

\section{The main Riemann--Hilbert problem}\label{sec:6}
Throughout this Section, we will assume that $\epsilon>0$ is sufficiently small so that all eventual
zeros of $\tilde a(k)$, $ a(k)$, $\tilde d(k)$, and $ d(k)$ appearing in the denominators below lie outside the disks $\{|k|\leq\epsilon\}$.

\subsection[The Riemann--Hilbert problem formalism in the case $u(0,t)\le 0$]{\texorpdfstring{${u}(0,t)\le 0$}{u-negative}}

\subsubsection{The pre-Riemann--Hilbert problem} 

Assume that we are given a solution $u(x,t)$ of the SW equation in the domain $(x,t)\in(0,\infty)\times (0,T)$ such that 
\begin{enumerate}[(i)]
    \item  $u(\cdot, t)\in  H^{4}(0,\infty)$ and $u_{xx}(\cdot, t)\in  H^{2,1}(0,\infty)$ for all $t\in[0,T]$,
    \item $m(x,0)+1>0$,
    \item $u(0,t)\le 0$,
\end{enumerate}
and consider the following piecewise meromorphic matrix-valued function $M(x,t, k)$ (depending on $(x,t)$ as parameters, for all $x\geq0$ and $t\in[0,T]$) defined in the domains separated by contour depicted 
in Figure \ref{fig:contour_RH_x}:

\begin{equation}\label{M_(xt)}
M^{(xt)}(x,t,k)=\begin{cases}

\left( \frac{\Phi_{\infty 2}^{(1)}(x,t,k)}{a(k)},\Phi_{\infty 3}^{(2)}(x,t,k)\right),\quad k\in\mathbb{C}^+\cap\{|k| >\epsilon\},\\

\left( \frac{\Psi_{0 2}^{(1)}(x,t,k)}{\tilde a(k)},\Psi_{0 3 }^{(2)}(x,t,k)\right)\eul^{-\ii k\nu(0)\sigma_3},\quad k\in\mathbb{C}^+\cap\{|k| <\epsilon\},\\

\left( \Phi_{\infty 3}^{(1)}(x,t,k),\frac{\Phi_{\infty 2}^{(2)}(x,t,k)}{a^*( k)}\right),\quad k\in\mathbb{C}^-\cap\{|k| >\epsilon\},\\

\left( \Psi_{0 3}^{(1)}(x,t,k),\frac{\Psi_{0 2}^{(2)}(x,t,k)}{\tilde a^*( k)}\right)\eul^{-\ii k\nu(0)\sigma_3},\quad k\in\mathbb{C}^-\cap\{|k| <\epsilon\},

\end{cases}
\end{equation}
where the functions $\Psi_{0j}$
are defined by \eqref{psi_0}.

Then the function $M^{(xt)}(x,t,k)$ has the following properties:

\begin{enumerate}
    \item Jump relation across $\mathbb{R}\cup\{|k|=\epsilon\}$
    \begin{subequations}
        \label{jump_M_(xt)}
        \begin{equation}
           M_-^{(xt)}(x,t,k)=M_+^{(xt)}(x,t,k)J^{(xt)}(x,t,k),\quad k\in \mathbb{R}\cup\{|k|=\epsilon\} 
        \end{equation}
        where
         \begin{equation} \label{jump_M_(xt)_matr}
          J^{(xt)}(x,t,k)=\eul^{-p(x,t,k)\sigma_3} J^{(xt)}_0(k)\eul^{p(x,t,k)\sigma_3}
        \end{equation} 
        with
\begin{equation}\label{jump_M_(xt)_matr_0}
   J^{(xt)}_0(k)=\begin{cases}
       \begin{pmatrix}
       1&-r^*(k)\\r(k)&1-r(k)r^*(k)
   \end{pmatrix},\quad k\in\mathbb{R}\cap\{|k| >\epsilon\},\\
   \begin{pmatrix}
       1-\tilde r(k)\tilde r^*(k)&\tilde r^*(k)\eul^{2\ii k\nu(0)}\\-\tilde r(k)\eul^{-2\ii k\nu(0)}&1
   \end{pmatrix},\quad k\in\mathbb{R}\cap\{|k| <\epsilon\},\\
   \begin{pmatrix}
       1&0\\
       \frac{\kappa_2^0\eul^{-\ii k\nu(0)}}{a(k)\tilde a(k)}&1
   \end{pmatrix},\quad k\in\mathbb{C}^+\cap\{|k| =\epsilon\},\\
   \begin{pmatrix}
       1&-\frac{\kappa_2^0\eul^{\ii k\nu(0)}}{a^*(k)\tilde a^*(k)}\\
       0&1
   \end{pmatrix},\quad k\in\mathbb{C}^-\cap\{|k| =\epsilon\},
   \end{cases} 
\end{equation}
\end{subequations}
and $r(k)=\frac{b^*(k)}{a(k)}$, $\tilde r(k)=\frac{\tilde b^*(k)}{\tilde a(k)}$.  
Moreover,  \eqref{s_via_til_s} implies
\begin{equation}
    \label{r_tilr_con}
    \tilde r(k)=\eul^{2\ii k\nu(0)}
\left(r(k)-\frac{\kappa_2^0 \eul^{-\ii k\nu(0)}}{a(k)\tilde a(k)}\right), \quad k\in\mathbb{R}.
\end{equation}

\begin{remark}\label{rem:x-RH_reduced}
       In the  case $Q(0,0)=I$ (with $\kappa_1^0=1$ and $\kappa_2^0=0$),
    the jump matrix $J_0^{(xt)}(k)$ simplifies to
    \[
    J_0^{(xt)}(k)=
       \begin{pmatrix}
       1&-r^*(k)\\r(k)&1-r(k)r^*(k)
   \end{pmatrix},\quad k\in\mathbb{R}.
    \]   
\end{remark}

\item Behavior at $\infty$:
\begin{subequations}\label{inf_M_(xt)}
\begin{equation}\label{inf_M_exp(xt)}
     M^{(xt)}(x,t,k)=I+\frac{1}{8\ii k}M^{\infty}(x,t)+o\left(\frac{1}{k}\right),\quad k\to\infty,
\end{equation}
with
\begin{equation}\label{inf_M_inf(xt)}
M^{\infty}(x,t)=\begin{pmatrix}
    *&\frac{m_{x}}{(m+1)^{\frac{3}{2}}}\\
    -\frac{m_{x}}{(m+1)^{\frac{3}{2}}}&*\end{pmatrix}.
\end{equation}
\end{subequations}

\begin{remark}
    If, in addition,  $m(\cdot,t)\in H^{3,1}(0,\infty)$,  then the integration-by-parts argument can be iterated once more, and the remainder improves to $O(k^{-2})$. 
\end{remark}

\item $\det M^{(xt)}(x,t,k)\equiv1$

\item Symmetry properties:
\begin{equation}\label{sym-M_(xt)}
M^{(xt)}( k)=\sigma_1\overline{M^{(xt)}(\bar k)}\sigma_1,\qquad M^{(xt)}(k)=\sigma_1M^{(xt)}(-k)\sigma_1
\end{equation}

\item Residue properties. Let $\{k_j\}$ be zeros of $a(k)$ in $\mathbb{C}^+$. Assume that they are all simple. Then

\begin{subequations}\label{res-M_(xt)}
\begin{align}\label{res-M+_(xt)}
\Res_{k_j}M^{(xt)(1)}(x,t,k)&=\frac{e^{2p(x,t,k_j)}}{\dot a(k_j)b(k_j)}M^{(xt)(2)}(x,t,k_j),\\
\label{res-M-_(xt)}
\Res_{\bar k_j}M^{(xt)(2)}(x,t,k)&=\frac{e^{-2p(x,t,\bar k_j)}}{\dot a^*(\bar k_j)b^*(\bar k_j)}M^{(xt)(1)}(x,t,\bar k_j).
\end{align}
\end{subequations}

Notice that $\dot a^*(\bar k_j)b^*(\bar k_j)=\overline{\dot a( k_j)b( k_j)}$

\item Behavior at $0$:
\begin{subequations}
    \label{i_beh-M_(xt)}
\begin{equation}
M^{(xt)}(x,t, k)=Q(x,t)\left(I+\ii k M_1^{(xt)}(x,t)+k^2 M_2^{(xt)}(x,t)+o(k^2)\right) \eul^{\ii k\left( \int_0^{x} \left(\sqrt{ m+1}-1\right)(\xi,t)\dd\xi+\eta(t)-\nu(0)\right)\sigma_3}, 
\end{equation}
where
\begin{equation}
    M_1^{(xt)}=\begin{pmatrix}
    \frac{u_x(x,t)}{2}&\frac{u_x(x,t)}{2}\\
    -\frac{u_x(x,t)}{2}&-\frac{u_x(x,t)}{2}
\end{pmatrix},
\end{equation}

\begin{equation}
    M_2^{(xt)}=\begin{pmatrix}
    *&u(x,t)\\
    u(x,t)&*
\end{pmatrix}.
\end{equation}
\end{subequations}
    
\end{enumerate}

\subsubsection{The main Riemann--Hilbert problem parametrized by $y$ and $t$}

The construction of the jump matrix involves $p(x,t, k)$ (see \eqref{p}), which in turn involves $ u(x,t)$. This suggests the introduction of a new variable
\begin{equation}\label{y_}
y(x,t)=\int_0^{x} \left(\sqrt{ m+1}\right)(\xi,t)\dd\xi-\int_0^{t} \left(u\sqrt{ m+1}\right)(0,\tau)\dd\tau, 
\end{equation}
in terms of which the jump matrix and the residue conditions can be written
explicitly. Notice that $\frac{\partial y}{\partial x}=\sqrt{ m(x,t)+1}$ and $\frac{\partial y}{\partial t}=-\left(u\sqrt{ m+1}\right)(x,t)$.

\textbf{The Riemann--Hilbert problem RH$^{(xt)}$:} Given $a( k)$, $b( k)$, and the set $\{k_j\}_1^J\subset \mathbb{C}^+$, find a piecewise meromorphic $2\times 2$  matrix-valued function $\hat M^{(xt)}(y,t, k)$ that satisfies the following conditions:

\begin{enumerate}
    \item Jump relation across $\mathbb{R}\cup\{|k|=\epsilon\}$
    \begin{subequations}
        \label{jump_hatM_(xt)}
        \begin{equation}
           \hat M_-^{(xt)}(y,t, k)=\hat M_+^{(xt)}(y,t, k)\hat J^{(xt)}(y,t, k),\quad k\in\mathbb{R}\cup\{|k|=\epsilon\} 
        \end{equation}
        where
         \begin{equation}
         \hat J^{(xt)}(y,t, k)=\eul^{-\hat p(y,t, k)\sigma_3} J^{(xt)}_0( k)\eul^{\hat p(y,t, k)\sigma_3}
        \end{equation} 
        with $J^{(xt)}_0( k)$ defined in \eqref{jump_M_(xt)_matr_0} and $\hat p(y,t, k)=\ii k\left(y-\frac{t}{2k^2}\right)$.

    \end{subequations}

\item Behavior at $\infty$:
\begin{equation}\label{inf_hatM_(xt)}
    \hat M^{(xt)}(y,t, k)=I+O(\frac{1}{ k}),\quad k\to\infty.
\end{equation}

\item Residue conditions:
\begin{subequations}\label{res-hatM_(xt)_2}
\begin{align}\label{res-hatM+_(xt)_2}
\Res_{k_j}\hat M^{(xt)(1)}(y,t, k)&=\frac{\eul^{2\hat p(y,t,k_j)}}{\dot a(k_j)b(k_j)}\hat M^{(xt)(2)}(y,t,k_j),\\
\label{res-hatM-_(xt)_2}
\Res_{\bar k_j}\hat M^{(xt)(2)}(y,t, k)&=\frac{\eul^{-2\hat p(y,t,\bar k_j)}}{\dot{ a^*}(\bar k_j)b^*(\bar k_j)}\hat M^{(xt)(1)}(y,t,\bar k_j),\quad j=1,\dots,J.
\end{align}
\end{subequations}

\end{enumerate}

\begin{proposition} If the solution $\hat M^{(xt)}(y,t, k)$ of the RH problem \eqref{jump_hatM_(xt)}--\eqref{res-hatM_(xt)_2} exists, then
    \begin{enumerate} 
        \item $\det \hat M^{(xt)}(y,t, k)\equiv1$;
    
        \item  it is unique;

        \item it satisfies symmetries

        \begin{equation*}\label{sym-hatM_(xt)}
\hat M^{(xt)}( k)=\sigma_1\overline{\hat M^{(xt)}(\bar k)}\sigma_1,\qquad \hat M^{(xt)}(k)=\sigma_1\hat M^{(xt)}(-k)\sigma_1
\end{equation*}
 
    \end{enumerate}
\end{proposition}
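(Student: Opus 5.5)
The plan is to treat RH$^{(xt)}$ as a special case of the general RH problem \eqref{jump-y_loc}--\eqref{res_hatM__loc}. We take $\Gamma=\mathbb{R}\cup\{|k|=\epsilon\}$ with the orientation of Figure \ref{fig:contour_RH_x}, $D_1$ the part of the plane to the left of $\Gamma$, $\hat J_0=J_0^{(xt)}$, poles $k_j\in\mathbb{C}^+$, and constants $c_j=1/(\dot a(k_j)b(k_j))$. We then invoke the general results already established.

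\emph{Determinant.} We check that $\det J_0^{(xt)}(k)\equiv1$ on each piece of the contour.
\begin{itemize}
\item On $\mathbb{R}\cap\{|k|>\epsilon\}$: $\det=1-rr^*+rr^*=1$.
\item On $\mathbb{R}\cap\{|k|<\epsilon\}$: the same computation with $\tilde r$ gives $1$.
\item On the two arcs: the jump matrices are triangular with unit diagonal, so the determinant is $1$.
\end{itemize}
Conjugation by $\eul^{\mp\hat p\sigma_3}$ does not change the determinant. The residue conditions only link one column to the value of the other column, so $\det\hat M^{(xt)}$ has removable singularities at $k_j$ and $\bar k_j$. Hence $\det\hat M^{(xt)}$ is entire and tends to $1$ at infinity, and Liouville gives part (1). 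One should also check that $\det\hat M^{(xt)}$ stays bounded at $k=0$ and at the intersection points $\pm\epsilon$. This holds because $J_0^{(xt)}$ is bounded there: $\epsilon$ was chosen so that $a,\tilde a$ have no zeros on $\{|k|\le\epsilon\}$. The argument then follows the general Proposition.

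\emph{Uniqueness.} This follows Proposition \ref{prop:uniqness}. Given two solutions $\hat M_1,\hat M_2$, the ratio $\hat M_1\hat M_2^{-1}$ has no jump, because both satisfy the same right-multiplicative jump. At $k_j$, a direct computation with the residue conditions shows that the principal part cancels, as in the standard argument. Since $\hat M_1\hat M_2^{-1}\to I$ at infinity, it equals $I$.

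\emph{Symmetries.} These are the main work, and we would apply Proposition \ref{prop:sym}. The contour is clearly invariant under $k\mapsto -k$ and $k\mapsto\bar k$, but the reflection reverses orientations, and this must be tracked carefully. We need to verify $\hat J_0(k)=\overline{\hat J_0(-\bar k)}$ and $\hat J_0^{-1}(k)=\sigma_1\hat J_0(-k)\sigma_1$, together with the matching symmetry of the exponential factor, using $\hat p^*(k)=-\hat p(k)=\hat p(-k)$.
\begin{itemize}
\item On the real line these reduce to $\overline{r(-\bar k)}=r(k)$ and $\overline{\tilde r(-\bar k)}=\tilde r(k)$, which follow from the symmetries $\overline{a(-\bar k)}=a(k)$ etc. of Section \ref{sec:3}. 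The relation with $\sigma_1$ then follows from $r^*(k)=\overline{r(\bar k)}$ together with the reversal of orientation on $\mathbb{R}$ under $k\mapsto-k$, which exchanges $\mathbb{C}^\pm$.
\item On the arcs, $k\mapsto -k$ maps the upper arc to the lower arc. We would check directly that $\sigma_1$-conjugating the lower-triangular factor $\begin{pmatrix}1&0\\ \kappa_2^0\eul^{-\ii k\nu(0)}/(a\tilde a)&1\end{pmatrix}$ at $-k$ gives the inverse of the upper factor. This uses the fact that $\kappa_2^0$ and $\nu(0)$ are real.
\end{itemize}
The residue constants satisfy $c_{\bar k_j}=\overline{c_{k_j}}$, as noted after \eqref{res-M_(xt)}. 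The relation $c_{-\bar k_j}=-\overline{c_{k_j}}$ comes from the symmetry of $a,b$ under $k\mapsto-\bar k$: zeros occur in pairs $k_j,-\bar k_j$, and $\dot a(-\bar k)=-\overline{\dot a(k)}$.

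With these checks, uniqueness gives both symmetries. The main obstacle is the orientation and sign bookkeeping on the circle arcs and on the two real-line pieces. If the general Proposition \ref{prop:sym} does not apply verbatim, the fallback is to verify directly that $\sigma_1\overline{\hat M(\bar k)}\sigma_1$ and $\sigma_1\hat M(-k)\sigma_1$ solve RH$^{(xt)}$ and then conclude by uniqueness.
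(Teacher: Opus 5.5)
Your route is the paper's. The paper states this proposition without a separate proof; it follows by specializing the general results of Section~\ref{sec:SWinyt} (the determinant proposition, Proposition~\ref{prop:uniqness}, and Proposition~\ref{prop:sym}) to the data $\Gamma=\mathbb{R}\cup\{|k|=\epsilon\}$, $D_1=(\mathbb{C}^+\cap\{|k|>\epsilon\})\cup(\mathbb{C}^-\cap\{|k|<\epsilon\})$, $\hat J_0=J_0^{(xt)}$ and $c_j=1/(\dot a(k_j)b(k_j))$.

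Your checks are correct:
\begin{itemize}
\item $\det J_0^{(xt)}\equiv1$ holds on all four pieces of the contour.
\item The relations $\overline{J_0^{(xt)}(-\bar k)}=J_0^{(xt)}(k)$ and $(J_0^{(xt)})^{-1}(k)=\sigma_1J_0^{(xt)}(-k)\sigma_1$ reduce to $r(-k)=r^*(k)$, $\tilde r(-k)=\tilde r^*(k)$, $a^*(-k)=a(k)$, $\tilde a^*(-k)=\tilde a(k)$, together with the reality of $\kappa_2^0$ and $\nu(0)$.
\item The residue constants satisfy $c_{-\bar k_j}=-\overline{c_{k_j}}$ and $c_{\bar k_j}=\overline{c_{k_j}}$.
\end{itemize}

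\textbf{The gap.} The step that fails is your justification of regularity at $k=0$ and $k=\pm\epsilon$. Boundedness of $J_0^{(xt)}$ near these points says nothing about boundedness of $\hat M^{(xt)}$ there. For RH$^{(xt)}$ exactly as formulated, if $\hat M$ is a solution then so is $\mathrm{diag}(1+1/k,\,1)\,\hat M$, and likewise $\mathrm{diag}(\eul^{1/k},1)\,\hat M$. Left multiplication by a matrix that is holomorphic in $\mathbb{C}\setminus\{0\}$ and tends to $I$ at infinity preserves the jump, the normalization, and the residue conditions. Yet the determinant of the first example is $1+1/k$. So parts (1) and (2) genuinely need an a priori condition at these points, and no property of the jump supplies it.

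\textbf{The repair.} Build the condition into the notion of solution: require $\hat M^{(xt)}$ to be bounded near $k=0,\pm\epsilon$, or to have $L^2$ boundary values. The paper also uses this only tacitly. The eigenfunction-based $M^{(xt)}$ satisfies it, by \eqref{i_beh-M_(xt)} near $k=0$ and by continuity near $\pm\epsilon$. With this condition, the isolated singularities of $\det\hat M^{(xt)}$ and of $\hat M_1\hat M_2^{-1}$ at these points are removable. The rest of your argument (Liouville, uniqueness, then the symmetries via Proposition~\ref{prop:sym}) then goes through unchanged.
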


\begin{remark}
The question of whether the RH problem
\eqref{jump_hatM_(xt)}--\eqref{res-hatM_(xt)_2} 
is solvable (without assuming that $\hat M^{(xt)}$ is determined from
a global solution to the SW equation) is a delicate one. 
The main difficulty is caused by the additional jump on the circle $|k|=\epsilon$, which appears whenever $\kappa_2^0\neq0$. This circular jump arising from matching the formulations near $k=0$ and $k=\infty$  destroys the positivity structure required for the conventional vanishing lemma \cite{ZH89}. In the special case $\kappa_2^0=0$, the circular jump disappears, the RH problem reduces to 
that with the  real line as contour, and thus the vanishing lemma yields unique solvability. If $\kappa_2^0\neq 0$, then solvability can be established under suitable small-norm assumptions on the spectral data. 
\end{remark}

In this way, we arrive at the following representational result:

\begin{theorem}\label{Prop:rep}
Let $ u(x,t)$ be a solution of the SW equation in the domain $x>0$, $0<t<T$ such that
\begin{enumerate}[(i)]
    \item $u(\cdot, t)\in  H^{4}(0,\infty)$ and $u_{xx}(\cdot, t)\in  H^{2,1}(0,\infty)$ for all $t\in[0,T]$;
    \item $1-u_{xx}(x,0)>0$ for all $x\geq0$;
    \item $u(0,t)\leq0$ for all $t\in[0,T]$.
\end{enumerate}
Then $ u(x,t)$ can be represented in terms of a unique solution of the RH problem \eqref{jump_hatM_(xt)}--\eqref{res-hatM_(xt)_2}, for which the data (jump matrix and residue condition) are given in terms of the
initial value $ u(x,0)$ via the associated spectral functions.

\end{theorem}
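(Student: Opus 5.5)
The plan is to start from the given solution $u(x,t)$ and take the piecewise function $M^{(xt)}(x,t,k)$ from \eqref{M_(xt)}. We then show that, once $x$ is replaced by the variable $y$ of \eqref{y_}, it becomes the unique solution of RH$^{(xt)}$ and its data come from $u(x,0)$ only. After that, we read $u$ off the expansion at $k=0$.

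\emph{Step 1: the $Q^{(xt)}$ construction is legitimate.} By hypotheses (i)--(ii) and the propagation of $m+1>0$, the Volterra equations \eqref{inteq_inf2}, \eqref{inteq_inf3}, \eqref{inteq_02} and \eqref{inteq_03} have solutions. The decisive point is hypothesis (iii). Because $u(0,t)\le 0$, the exponentials in the boundary term of \eqref{inteq_inf2} are bounded exactly in the half-planes listed for the case $u(0,t)\le0$. Hence $\Phi^{(1)}_{\infty2}$ has the asymptotics $(1,0)^T+O(1/k)$ in $\mathbb C^+$, and $\Phi^{(2)}_{\infty2}$ has $(0,1)^T+O(1/k)$ in $\mathbb C^-$, uniformly in $(x,t)$. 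These are precisely the columns used in \eqref{M_(xt)}, so no boundary spectral functions $A,B$ are needed. The jump \eqref{jump_M_(xt)} follows from \eqref{rel_inf}, \eqref{rel_0} and \eqref{Phi_0_inf_rel}. On $\mathbb R$ it is obtained by solving $\Phi_{\infty3}=\Phi_{\infty2}e^{-p\sigma_3}s e^{p\sigma_3}$ columnwise. On $|k|=\epsilon$ it comes from $\Phi_{\infty j}=Q\Phi_{0j}e^{-p_0\sigma_3}C_je^{p\sigma_3}$ with $C_2=Q^{-1}(0,0)$ and $C_3=e^{-ik\nu(0)\sigma_3}$, together with \eqref{tilde-a--a}. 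The residue conditions \eqref{res-M_(xt)} follow from $\Phi^{(1)}_{\infty2}(k_j)=\Phi^{(2)}_{\infty3}(k_j)e^{2p}/b(k_j)$ at zeros of $a$, and the conjugate conditions follow by the symmetries \eqref{sym-Phi}. The normalization \eqref{inf_M_(xt)} and $\det M^{(xt)}=1$ then follow as listed.

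\emph{Step 2: pass to $(y,t)$ and invoke uniqueness.} With $y$ as in \eqref{y_} we have $p(x,t,k)=\hat p(y(x,t),t,k)$. The map $x\mapsto y(x,t)$ is a diffeomorphism of $[0,\infty)$ onto $[\eta(t),\infty)$, since $y_x=\sqrt{m+1}>0$. So $\hat M^{(xt)}(y,t,k):=M^{(xt)}(x(y,t),t,k)$ satisfies \eqref{jump_hatM_(xt)}--\eqref{res-hatM_(xt)_2}. Its data $r,\tilde r,\kappa^0_{1,2},\nu(0),\{k_j\},\dot a(k_j)b(k_j)$ depend only on $a,b$, hence only on $m_0=-u_{xx}(x,0)$, and therefore on $u(x,0)$ (Remark~\ref{rem:data_ab}). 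Uniqueness follows from Proposition~\ref{prop:uniqness}: the ratio of two solutions is entire and tends to $I$, using $\det=1$.

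\emph{Step 3: reconstruction.} From the behavior at $0$ in \eqref{i_beh-M_(xt)}, the constant term is $Q(x,t)$. Hence $\hat\alpha+\hat\beta=q=(m+1)^{1/4}$, which gives $m$. The $(12)$ entries give $u_x=2\hat f_2$ and $u=\hat g_2-\hat f_1\hat f_2+\hat f_2^2$, consistent with \eqref{hmbbgg}; the extra exponential factor and the left factor $Q$ must be normalized accordingly. Finally $x(y,t)=\int_{\eta(t)}^y(\hat\alpha+\hat\beta)^{-2}d\xi$ as in \eqref{x_y_}.

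\emph{The main obstacle} is the lower boundary $y=\eta(t)$. The reconstruction formula needs $\eta(t)=-\int_0^t u\sqrt{m+1}(0,\tau)d\tau$, which is not obviously initial-data-only. I would handle it through the conservation law \eqref{cons_law}. Since $y\to\infty$ behaves like $x+\nu(t)+\eta(t)$, we get $\eta(t)=\lim_{y\to\infty}\bigl(y-x(y,t)\bigr)-\nu(0)$, with $\nu(0)$ known from $a,b$. Equivalently, one fixes $x(y,t)=y-\nu(0)-\int_y^\infty\bigl((\hat\alpha+\hat\beta)^{-2}-1\bigr)d\xi$, which uses only the RH solution. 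Verifying that the exponential factor in the $k\to0$ expansion is consistent with this, and that the $o(k^2)$ expansions at $k=0$ hold uniformly in both half-planes, is the delicate part.
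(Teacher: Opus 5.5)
Your Steps 1--3 follow the paper's route. The pre-RH function \eqref{M_(xt)} uses only $\Phi_{\infty2},\Phi_{\infty3},\Psi_{02},\Psi_{03}$, with no $A,B$, precisely because $u(0,t)\le0$. The substitution $p(x,t,k)=\hat p(y,t,k)$ turns it into a solution of RH$^{(xt)}$, uniqueness identifies it, and $u$ is read off at $k=0$. Using \eqref{hmbbgg} instead of the paper's $\hat u=\hat g_2$, $\hat v=2\hat f_1$ is harmless: $\hat f_2$ and $\hat g_2-\hat f_1\hat f_2+\hat f_2^2$ are invariant under right multiplication by $\eul^{\ii kc\sigma_3}$, so you never need to split off the exponential.

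The real difference is how you fix the lower boundary $y=\eta(t)$. The paper defines $\eta(t)$ through the ODE \eqref{eta_via_RH_xt}. That ODE uses only RH data at $y=\eta(t)$, but it is nonlinear and needs enough regularity in $y$ to be uniquely solvable. You instead normalize $x(y,t)$ at $y=+\infty$, using $\nu(0)$ (available from $a,b$ by Remark~\ref{rem:data_ab}) together with \eqref{cons_law}. Then $\eta(t)$ is simply the unique zero of the increasing function $x(\cdot,t)$.

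Your formula $x(y,t)=y-\nu(0)-\int_y^\infty((\hat\alpha+\hat\beta)^{-2}-1)\,\dd\xi$ is correct; the integral converges because $m(\cdot,t)\in L^1$. The consistency you flagged as delicate does hold. The exponent in \eqref{i_beh-M_(xt)} is $\hat\rho=y-x-\nu(0)$, which equals $\hat f_1-\hat f_2$ in the notation of \eqref{hat_M_near_0}. By \eqref{rel_7}, \eqref{rel_8} and \eqref{rel_1}, $(\hat f_1-\hat f_2)_y=1-(\hat\alpha+\hat\beta)^{-2}$. So your normalization is just $x=y-\nu(0)-\hat\rho$, and $x(y,t)$ can even be read off pointwise. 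Your route avoids the ODE entirely, at the price of relying on the conservation law and on decay of $m$ at infinity.

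One slip: the sentence ``$\eta(t)=\lim_{y\to\infty}(y-x(y,t))-\nu(0)$'' is wrong as written. By \eqref{cons_law}, $\nu(t)+\eta(t)=\nu(0)$, so that limit equals $\nu(0)$ for every $t$ and the formula would give $\eta\equiv0$. The conservation law supplies only the normalization $y-x\to\nu(0)$, which is exactly your ``equivalently'' formula; that is the version to keep.
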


\begin{proof}
  The solution of the RH problem \eqref{jump_hatM_(xt)}--\eqref{res-hatM_(xt)_2} is unique. Together with  \eqref{inf_M_(xt)} and \eqref{i_beh-M_(xt)}, this yields  the following procedure for representing $ u(x,t)$ in terms of $\hat M^{(xt)}(y,t, k)$:

  \begin{enumerate}[Step 1.]

   \item Given $u(x,0)$ construct $a(k)$ and $b(k)$  (see Section \ref{sec:3});
   
    \item Having $a( k)$ and $b( k)$,  compute $\kappa_i^0$, $i=1,2$, $\tilde a(k)$ and $\tilde b(k)$ via \eqref{a_at_i} and \eqref{tilde-a--a}, and construct the RH problem \eqref{jump_hatM_(xt)}--\eqref{res-hatM_(xt)_2};

    \item Solve the constructed RH problem \eqref{jump_hatM_(xt)}--\eqref{res-hatM_(xt)_2};

    \item Evaluate the solution of this RH problem at $ k=0$:
\begin{equation*}\label{M(xt)_at_0}
       \begin{aligned}
       \hat M^{(xt)}(y,t, k)=&\begin{pmatrix}
           \hat \alpha(y,t)&\hat \beta(y,t)\\
           \hat \beta(y,t)&\hat \alpha(y,t)
       \end{pmatrix}\left( I+\ii k\begin{pmatrix}
           \hat f_1(y,t)&\hat f_1(y,t)\\-\hat f_1(y,t)&-\hat f_1(y,t)
       \end{pmatrix}\right.+
       \\
      &\left. +k^2\begin{pmatrix}
           \hat g_1(y,t)&\hat g_2(y,t)\\ \hat g_2(y,t)&-\hat g_1(y,t)
       \end{pmatrix}+o(k^2)\right)\eul^{\ii k \hat\rho(y,t)\sigma_3}, 
    \end{aligned}
    \end{equation*}
and at $ k=\infty$:
    \begin{equation*}\label{hatM_xt__inf}
        M^{(xt)}(y,t, k)=I-\frac{\ii}{2 k}\begin{pmatrix}
    *&\hat \eta(y,t)\\
    -\hat \eta(y,t)&*
\end{pmatrix}+o\left(\frac{1}{ k}\right)
    \end{equation*}

    \item Define $\hat u(y,t)$ and $\hat m(y,t)$ from these expansions in the following way (cf. \eqref{inf_M_(xt)} and \eqref{i_beh-M_(xt)}):
    \begin{subequations}
        \begin{align}\label{hat_u_via_RH_xt}
     &\hat{ u}(y,t)=\hat g_2(y,t), \\\label{hat_u_x_via_RH_xt}
    & \hat{ v}(y,t)= 2\hat f_1(y,t),\\\label{hat_m_via_RH_xt}
   & \hat{ m}(y,t)=(\hat \alpha(y,t)+\hat \beta(y,t))^4-1.
    \end{align}
    \end{subequations} 
Define $\eta(t)$ as the solution of
\begin{equation}\label{eta_via_RH_xt}
\eta_t(t)
=
-\hat g_2(\eta(t),t)
\bigl(\hat\alpha(\eta(t),t)+\hat\beta(\eta(t),t)\bigr)^2,
\qquad
\eta(0)=0.
\end{equation}
Then define $x(y,t)$ by (cf. \eqref{y__geq})
\begin{equation}\label{y_via_RH_xt}
x(y,t)
=
\int_{\eta(t)}^y
\frac{\dd s}
{\bigl(\hat\alpha(s,t)+\hat\beta(s,t)\bigr)^2}.
\end{equation}
Then
\begin{subequations}\label{v_j_via_RH_xt}
\begin{align}
   & u(x,t)=
     \hat{ u}(y(x,t),t),\\
     & m(x,t)=
     \hat{m}(y(x,t),t).
    \end{align}
    \end{subequations}
\end{enumerate}
  
\end{proof}

\begin{remark}The essential difference from the corresponding half-line problem for the CH equation  \cite{BS08} lies in the spectral data needed to reconstruct the solution. In the case of the SW equation, provided that $u(0,t)\leq 0$, the solution $u(x,t)$ is reconstructed from the unique solution of the RH problem \eqref{jump_hatM_(xt)}--\eqref{res-hatM_(xt)_2}, whose jump matrix and residue conditions are determined, through the associated spectral functions, solely by the initial datum $u(x,0)$. By contrast, for the CH equation, the reconstruction also requires spectral data determined by the boundary values.
\end{remark}

\begin{corollary}\label{cor:uniq}
Let $u_1$ and $u_2$ be two solutions of the SW equation on
$[0,\infty)\times[0,T]$ such that, for $i=1,2$,
\begin{enumerate}[(i)]
    \item $u_i(\cdot,t)\in H^4(0,\infty)$ and $u_{i,xx}(\cdot,t)\in H^{2,1}(0,\infty)$ for all $t\in[0,T]$;
    \item $1-u_{i,xx}(x,0)>0$ for all $x\geq0$;
    \item $u_i(0,t)\leq0$ for all $0\leq t\leq T$;
\end{enumerate}
If $u_1(x,0)=u_2(x,0)$ for all $x\geq0$,
then
\[
u_1(x,t)=u_2(x,t),
\qquad x\geq0,\qquad 0\leq t<T.
\]
\end{corollary}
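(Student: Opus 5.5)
The plan is to deduce the corollary directly from the representation result, Theorem \ref{Prop:rep}, using the uniqueness of the solution of the RH problem \eqref{jump_hatM_(xt)}--\eqref{res-hatM_(xt)_2}. The key point is that in the regime $u(0,t)\le 0$ all data of that RH problem are determined by the initial profile alone.

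First, since $u_1(x,0)=u_2(x,0)$, we have $m_1(x,0)=m_2(x,0)=m_0(x)$. Hence the Volterra equations \eqref{inteq_inf3_t_0} and \eqref{inteq_03_t_0} coincide for the two solutions. This gives the same spectral functions $a,b,\tilde a,\tilde b$, the same zeros $\{k_j\}$ with the same residue constants $1/(\dot a(k_j)b(k_j))$, and the same $\kappa_1^0,\kappa_2^0,\nu(0)$ (cf. Remark \ref{rem:data_ab}).

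Consequently the jump matrix $J_0^{(xt)}$ in \eqref{jump_M_(xt)_matr_0} and the residue data coincide. Hatting replaces $p(x,t,k)$ by $\hat p(y,t,k)=\ii k(y-\frac{t}{2k^2})$, so $x$ and the boundary values no longer appear. By hypotheses (i)–(iii), each $u_i$ satisfies the assumptions of Theorem \ref{Prop:rep}. Thus, for each $i$, the function $M^{(xt)}_i$ built as in \eqref{M_(xt)} and rewritten in the variable $y_i(x,t)$ of \eqref{y_} solves the same RH problem. By the uniqueness proposition (Liouville applied to $\hat M_1\hat M_2^{-1}$, with $\det\equiv 1$), both coincide with the single solution $\hat M^{(xt)}(y,t,k)$. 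Therefore $\hat\alpha,\hat\beta,\hat f_1,\hat g_2$, and hence $\hat u,\hat m$ in \eqref{hat_u_via_RH_xt}–\eqref{hat_m_via_RH_xt}, are the same functions of $(y,t)$ for both solutions.

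It remains to return to $(x,t)$, and this is the main obstacle: $y_i(x,t)$ depends a priori on each solution through $\eta_i(t)=-\int_0^t(u_i\sqrt{m_i+1})(0,\tau)\dd\tau$. I would handle it through the ODE \eqref{eta_via_RH_xt}. Since $y_i(0,t)=\eta_i(t)$, we have $\hat u(\eta_i(t),t)=u_i(0,t)$ and $(\hat\alpha+\hat\beta)^2(\eta_i(t),t)=\sqrt{m_i(0,t)+1}$. So each $\eta_i$ solves
\[
\eta'=-\hat g_2(\eta,t)\bigl(\hat\alpha+\hat\beta\bigr)^2(\eta,t),\qquad \eta(0)=0,
\]
with a right-hand side common to both solutions. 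The regularity assumed in \eqref{regularity_M_zero}, derived here from (i) and the Neumann series, makes this right-hand side $C^1$ in $y$, hence locally Lipschitz, so Picard–Lindelöf gives $\eta_1=\eta_2$ on $[0,T)$. An alternative is to use $\eta_i(t)=\nu(0)-\nu_i(t)$ from \eqref{cons_law}, but the ODE route is cleaner.

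Then \eqref{y_via_RH_xt} gives the same inverse map $x(y,t)$ for both solutions, a diffeomorphism since $(\hat\alpha+\hat\beta)^2>0$. Hence $y_1(x,t)=y_2(x,t)$, and therefore $u_1(x,t)=\hat u(y(x,t),t)=u_2(x,t)$ for $x\ge0$, $0\le t<T$.
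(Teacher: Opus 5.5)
Your proposal is correct and follows essentially the same route as the paper. The paper gives no separate proof and reads the corollary off Theorem~\ref{Prop:rep}: the RH data depend only on $u(x,0)$, the RH solution is unique, and the return to $(x,t)$ goes through the ODE \eqref{eta_via_RH_xt} for $\eta(t)$ and the map \eqref{y_via_RH_xt}. Your explicit Lipschitz/Picard--Lindel\"of argument for the uniqueness of $\eta$ fills in a step the paper leaves implicit. Note only that the common RH solution is a priori available just on $\{y\ge\min(\eta_1(t),\eta_2(t))\}$, which suffices because both graphs and the segment between them lie in that set.
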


\subsection[The Riemann--Hilbert problem formalism in the case $u(0,t)\ge 0$]{\texorpdfstring{${u}(0,t)\ge 0$}{u-positive}}

\subsubsection{The pre-Riemann--Hilbert problem} 

Assume that we are given a solution $u(x,t)$ of the SW equation in the domain $(x,t)\in(0,\infty)\times (0,T)$ such that 
\begin{enumerate}[(i)]
    \item  $u(\cdot, t)\in  H^{4}(0,\infty)$ and $u_{xx}(\cdot, t)\in  H^{2,1}(0,\infty)$ for all $t\in[0,T]$,
    \item $-u_{xx}(x,0)+1>0$,
    \item $u(0,t)\ge 0$ and $-u_{xx}(0,t)+1>0$.
\end{enumerate}
Introduce 
\begin{align*}
    d(k)=a(k)A^*(k)-b(k)B^*(k),
\end{align*}
and consider the following piecewise meromorphic matrix-valued function $M(x,t, k)$ (depending on $(x,t)$ as parameters, for all $x\geq0$ and $t\in[0,T]$) defined in the domains separated by contour depicted 
in Figure \ref{fig:contour_RH_x}:

\begin{equation}\label{M_(xt)_geq}
M^{(xt)}(x,t,k)=\begin{cases}

\left( \frac{\Phi_{\infty 1}^{(1)}(x,t,k)}{d(k)},\Phi_{\infty 3}^{(2)}(x,t,k)\right),\quad k\in\mathbb{C}^+\cap\{|k| >\epsilon\},\\

\left( \frac{\Psi_{0 2}^{(1)}(x,t,k)}{\tilde a(k)},\Psi_{0 3 }^{(2)}(x,t,k)\right)\eul^{-\ii k\nu(0)\sigma_3},\quad k\in\mathbb{C}^+\cap\{|k| <\epsilon\},\\

\left( \Phi_{\infty 3}^{(1)}(x,t,k),\frac{\Phi_{\infty 1}^{(2)}(x,t,k)}{d^*( k)}\right),\quad k\in\mathbb{C}^-\cap\{|k| >\epsilon\},\\

\left( \Psi_{0 3}^{(1)}(x,t,k),\frac{\Psi_{0 2}^{(2)}(x,t,k)}{\tilde a^*( k)}\right)\eul^{-\ii k\nu(0)\sigma_3},\quad k\in\mathbb{C}^-\cap\{|k| <\epsilon\},

\end{cases}
\end{equation}
where the functions $\Psi_{0j}$
are defined by \eqref{psi_0}.

Then the function $M^{(xt)}(x,t,k)$ has the following properties:

\begin{enumerate}
    \item Jump relation across $\mathbb{R}\cup\{|k|=\epsilon\}$
    \begin{subequations}
        \label{jump_M_(xt)_geq}
        \begin{equation}
           M_-^{(xt)}(x,t,k)=M_+^{(xt)}(x,t,k)J^{(xt)}(x,t,k),\quad k\in \mathbb{R}\cup\{|k|=\epsilon\} 
        \end{equation}
        where
         \begin{equation} \label{jump_M_(xt)_matr_geq}
          J^{(xt)}(x,t,k)=\eul^{-p(x,t,k)\sigma_3} J^{(xt)}_0(k)\eul^{p(x,t,k)\sigma_3}
        \end{equation} 
        with
\begin{equation}\label{jump_M_(xt)_matr_0_geq}
   J^{(xt)}_0(k)=\begin{cases}
       \begin{pmatrix}
          1&\frac{aB-bA}{d^*}\\
          -\frac{a^*B^*-b^*A^*}{d}&\frac{1}{dd^*}
       \end{pmatrix},\quad k\in\mathbb{R}\cap\{|k| >\epsilon\},\\
   \begin{pmatrix}
       1-\tilde r(k)\tilde r^*(k)&\tilde r^*(k)\eul^{2\ii k\nu(0)}\\-\tilde r(k)\eul^{-2\ii k\nu(0)}&1
   \end{pmatrix},\quad k\in\mathbb{R}\cap\{|k| <\epsilon\},\\
   \begin{pmatrix}
       1&0\\
     -\frac{B^*(k)}{a(k)d(k)} + \frac{\kappa_2^0\eul^{-\ii k\nu(0)}}{a(k)\tilde a(k)}&1
   \end{pmatrix},\quad k\in\mathbb{C}^+\cap\{|k| =\epsilon\},\\
   \begin{pmatrix}
       1&\frac{B(k)}{a^*(k)d^*(k)}-\frac{\kappa_2^0\eul^{\ii k\nu(0)}}{a^*(k)\tilde a^*(k)}\\
       0&1
   \end{pmatrix},\quad k\in\mathbb{C}^-\cap\{|k| =\epsilon\},
   \end{cases} 
\end{equation}
\end{subequations}
and $r(k)=\frac{b^*(k)}{a(k)}$, $\tilde r(k)=\frac{\tilde b^*(k)}{\tilde a(k)}$. 

\begin{remark}\label{rem:x-RH_reduced_geq}
       In the  case $Q(0,0)=I$ (with $\kappa_1^0=1$ and $\kappa_2^0=0$),
    the jump matrix $J_0^{(xt)}(k)$ simplifies to
    \[
     J^{(xt)}_0(k)=\begin{cases}
       \begin{pmatrix}
          1&\frac{aB-bA}{d^*}\\
          -\frac{a^*B^*-b^*A^*}{d}&\frac{1}{dd^*}
       \end{pmatrix},\quad k\in\mathbb{R}\cap\{|k| >\epsilon\},\\
   \begin{pmatrix}
       1-\tilde r(k)\tilde r^*(k)&\tilde r^*(k)\eul^{2\ii k\nu(0)}\\-\tilde r(k)\eul^{-2\ii k\nu(0)}&1
   \end{pmatrix},\quad k\in\mathbb{R}\cap\{|k| <\epsilon\},\\
   \begin{pmatrix}
       1&0\\
     -\frac{B^*(k)}{a(k)d(k)}&1
   \end{pmatrix},\quad k\in\mathbb{C}^+\cap\{|k| =\epsilon\},\\
   \begin{pmatrix}
       1&\frac{B(k)}{a^*(k)d^*(k)}\\
       0&1
   \end{pmatrix},\quad k\in\mathbb{C}^-\cap\{|k| =\epsilon\}.
   \end{cases}.
    \]   
\end{remark}

\item Behavior at $\infty$:
\begin{subequations}\label{inf_M_(xt)_geq}
\begin{equation}\label{inf_M_exp(xt)_geq}
     M^{(xt)}(x,t,k)=I+\frac{1}{8\ii k}M^{\infty}(x,t)+o\left(\frac{1}{k}\right),\quad k\to\infty,
\end{equation}
with
\begin{equation}\label{inf_M_inf(xt)_geq}
M^{\infty}(x,t)=\begin{pmatrix}
    *&\frac{m_{x}}{(m+1)^{\frac{3}{2}}}\\
    -\frac{m_{x}}{(m+1)^{\frac{3}{2}}}&*\end{pmatrix}.
\end{equation}
\end{subequations}

\begin{remark}
    If, in addition,  $m(\cdot,t)\in H^{3,1}(0,\infty)$,  then the integration-by-parts argument can be iterated once more, and the remainder improves to $O(k^{-2})$. 
\end{remark}

\item $\det M^{(xt)}(x,t,k)\equiv1$

\item Symmetry properties:
\begin{equation}\label{sym-M_(xt)_geq}
M^{(xt)}( k)=\sigma_1\overline{M^{(xt)}(\bar k)}\sigma_1,\qquad M^{(xt)}(k)=\sigma_1M^{(xt)}(-k)\sigma_1
\end{equation}

\item Residue properties. Let $\{\mu_j\}$ be zeros of $d(k)$ in $\mathbb{C}^+$. Assume that they are all simple. Then

\begin{subequations}\label{res-M_(xt)_geq}
\begin{align}\label{res-M+_(xt)_geq}
\Res_{\mu_j}M^{(xt)(1)}(x,t,k)&=\frac{e^{2p(x,t,\mu_j)}B^*(\mu_j)}{\dot d(\mu_j)a(\mu_j)}M^{(xt)(2)}(x,t,\mu_j),\\
\label{res-M-_(xt)_geq}
\Res_{\bar \mu_j}M^{(xt)(2)}(x,t,k)&=\frac{e^{-2p(x,t,\bar \mu_j)}B(\bar \mu_j)}{\dot d^*(\bar \mu_j)a^*(\bar \mu_j)}M^{(xt)(1)}(x,t,\bar \mu_j).
\end{align}
\end{subequations}

Notice that $\frac{B(\bar \mu_j)}{\dot d^*(\bar \mu_j)a^*(\bar \mu_j)}=\overline{\frac{B^*(\mu_j)}{\dot d( \mu_j)a( \mu_j)}}$

\item Behavior at $0$: 
\begin{subequations}
    \label{i_beh-M_(xt)_geq}
\begin{equation}
M^{(xt)}(x,t, k)=Q(x,t)\left(I+\ii k M_1^{(xt)}(x,t)+k^2 M_2^{(xt)}(x,t)+o(k^2)\right) \eul^{\ii k\left( \int_0^{x} \left(\sqrt{ m+1}-1\right)(\xi,t)\dd\xi+\eta(t)-\nu(0)\right)\sigma_3}, 
\end{equation}
where
\begin{equation}
    M_1^{(xt)}=\begin{pmatrix}
    \frac{u_x(x,t)}{2}&\frac{u_x(x,t)}{2}\\
    -\frac{u_x(x,t)}{2}&-\frac{u_x(x,t)}{2}
\end{pmatrix},
\end{equation}

\begin{equation}
    M_2^{(xt)}=\begin{pmatrix}
    *&u(x,t)\\
    u(x,t)&*
\end{pmatrix}.
\end{equation}
\end{subequations}

\end{enumerate}

\subsubsection{The main Riemann--Hilbert problem parametrized by $y$ and $t$}

The construction of the jump matrix involves $p(x,t, k)$ (see \eqref{p}), which in turn involves $ u(x,t)$. This suggests the introduction of a new variable
\begin{equation}\label{y__geq}
y(x,t)=\int_0^{x} \left(\sqrt{ m+1}\right)(\xi,t)\dd\xi-\int_0^{t} \left(u\sqrt{ m+1}\right)(0,\tau)\dd\tau, 
\end{equation}
in terms of which the jump matrix and the residue conditions can be written
explicitly. Notice that $\frac{\partial y}{\partial x}=\sqrt{ m(x,t)+1}$ and $\frac{\partial y}{\partial t}=-\left(u\sqrt{ m+1}\right)(x,t)$.

\textbf{The Riemann--Hilbert problem RH$^{(xt)}$:} Given $a( k)$, $b( k)$, $A( k)$, $B( k)$ and the set $\{\mu_j\}_1^J\subset \mathbb{C}^+$, find a piecewise meromorphic $2\times 2$  matrix-valued function $\hat M^{(xt)}(y,t, k)$ that satisfies the following conditions:

\begin{enumerate}
    \item Jump relation across $\mathbb{R}\cup\{|k|=\epsilon\}$
    \begin{subequations}
        \label{jump_hatM_(xt)_geq}
        \begin{equation}
           \hat M_-^{(xt)}(y,t, k)=\hat M_+^{(xt)}(y,t, k)\hat J^{(xt)}(y,t, k),\quad k\in \mathbb{R}\cup\{|k|=\epsilon\} 
        \end{equation}
        where
         \begin{equation}
         \hat J^{(xt)}(y,t, k)=\eul^{-\hat p(y,t, k)\sigma_3} J^{(xt)}_0( k)\eul^{\hat p(y,t, k)\sigma_3}
        \end{equation} 
        with $J^{(xt)}_0( k)$ defined in \eqref{jump_M_(xt)_matr_0_geq} and $\hat p(y,t, k)=\ii k\left(y-\frac{t}{2k^2}\right)$.

    \end{subequations}

\item Behavior at $\infty$:
\begin{equation}\label{inf_hatM_(xt)_geq}
    \hat M^{(xt)}(y,t, k)=I+O(\frac{1}{ k}),\quad k\to\infty.
\end{equation}

\item Residue conditions:
\begin{subequations}\label{res-hatM_(xt)_2_geq}
\begin{align}\label{res-hatM+_(xt)_2_geq}
\Res_{\mu_j}\hat M^{(xt)(1)}(y,t, k)&=\frac{\eul^{2\hat p(y,t,\mu_j)}B^*(\mu_j)}{\dot d(\mu_j)a(\mu_j)}\hat M^{(xt)(2)}(y,t,\mu_j),\\
\label{res-hatM-_(xt)_2_geq}
\Res_{\bar \mu_j}\hat M^{(xt)(2)}(y,t, k)&=\frac{\eul^{-2\hat p(y,t,\bar \mu_j)}B(\bar\mu_j)}{\dot{ d^*}(\bar \mu_j)a^*(\bar \mu_j)}\hat M^{(xt)(1)}(y,t,\bar \mu_j).
\end{align}
\end{subequations}

\end{enumerate}

\begin{proposition} If the solution $\hat M^{(xt)}(y,t, k)$ of the RH problem \eqref{jump_hatM_(xt)_geq}--\eqref{res-hatM_(xt)_2_geq} exists, then
    \begin{enumerate} 
        \item $\det \hat M^{(xt)}(y,t, k)\equiv1$;
    
        \item  it is unique;

        \item it satisfies symmetries

        \begin{equation}\label{sym-hatM_(xt)_geq}
\hat M^{(xt)}( k)=\sigma_1\overline{\hat M^{(xt)}(\bar k)}\sigma_1,\qquad \hat M^{(xt)}(k)=\sigma_1\hat M^{(xt)}(-k)\sigma_1
\end{equation}
 
    \end{enumerate}
\end{proposition}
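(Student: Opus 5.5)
We reduce all three claims to the general facts already established for the RH problem \eqref{jump-y_loc}--\eqref{res_hatM__loc} parametrized by $y$ and $t$. The contour is $\Gamma=\mathbb R\cup\{|k|=\epsilon\}$, with $D_1$ and $D_2$ chosen accordingly. The jump is $\hat J^{(xt)}=\eul^{-\hat p\sigma_3}J_0^{(xt)}\eul^{\hat p\sigma_3}$, and the residue data are $k_j=\mu_j$ with $c_j=B^*(\mu_j)/(\dot d(\mu_j)a(\mu_j))$.

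The first step is to check $\det J_0^{(xt)}\equiv1$ piece by piece in \eqref{jump_M_(xt)_matr_0_geq}. The two circle pieces are triangular with unit diagonal, and the small-real piece is the same as in the case $u(0,t)\le0$. On $\mathbb R\cap\{|k|>\epsilon\}$ the determinant equals
\[
\frac{1}{dd^*}+\frac{(aB-bA)(a^*B^*-b^*A^*)}{dd^*}.
\]
Using $aa^*-bb^*=1=AA^*-BB^*$ and $d=aA^*-bB^*$, the numerator reduces to $dd^*$.

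With $\det J_0\equiv 1$ in hand, we argue as follows. The function $\det\hat M^{(xt)}$ has no jump across $\Gamma$. Its apparent poles at $\mu_j,\bar\mu_j$ are removable, because the residue conditions make the singular column proportional to the other column. Hence $\det\hat M^{(xt)}$ is entire, tends to $1$ at infinity by \eqref{inf_hatM_(xt)_geq}, and equals $1$ by Liouville's theorem. This gives (1).

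For (2), take two solutions $\hat M_1,\hat M_2$ and form $\hat M_1\hat M_2^{-1}$. Here $\hat M_2^{-1}$ exists by (1). This ratio has no jump on $\Gamma$. At $\mu_j$, the standard computation with the rank-one residue structure (as in Proposition~\ref{prop:uniqness}) shows the singularities cancel. The ratio tends to $I$ at infinity, so it equals $I$.

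For (3), we apply Proposition~\ref{prop:sym}, or argue directly as there. We show that $\sigma_1\overline{\hat M^{(xt)}(\bar k)}\sigma_1$ and $\sigma_1\hat M^{(xt)}(-k)\sigma_1$ solve the same RH problem and then invoke uniqueness. The needed inputs are the symmetries of the spectral functions: $a^*,b^*$ defined via Schwarz conjugation, $\overline{a(-\bar k)}=a(k)$ (and likewise for $b,A,B,\tilde a,\tilde b$), and the resulting $\overline{d(-\bar k)}=d(k)$. From these, $J_0^{(xt)}$ satisfies $\overline{J_0(\bar k)}=\sigma_1 J_0(k)^{-1}\sigma_1$ with the orientation reversal on $\mathbb R$ and on the circle, together with the analogous relation under $k\mapsto-k$. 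In addition, $\hat p^*=-\hat p=\hat p(-k)$. Finally, the zero set $\{\mu_j\}$ is invariant under $k\mapsto-\bar k$, with matching constants $c_{-\bar\mu_j}=-\overline{c_{\mu_j}}$.

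The main obstacle is the bookkeeping for the circle jumps in (3). The entries $\kappa_2^0\eul^{\mp\ii k\nu(0)}/(a\tilde a)$ and $B^*/(ad)$ must be checked to transform correctly under both involutions. This uses $\kappa_2^0\in\mathbb R$, $\nu(0)\in\mathbb R$, and the orientation of the upper and lower arcs. I would verify these directly from the formulas, using the fact that they arise from the genuine eigenfunction relations, which already satisfy \eqref{sym-M_(xt)_geq}.
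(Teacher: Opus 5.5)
Your proposal is correct and follows the paper's route. The paper obtains the proposition from three general results: the Liouville argument for $\det\hat M$, the Liouville argument for $\hat M_1\hat M_2^{-1}$, and the uniqueness-based symmetry argument of Proposition~\ref{prop:sym}, applied here to the contour $\mathbb R\cup\{|k|=\epsilon\}$ with residue data at the zeros $\mu_j$ of $d$. Your explicit checks supply details the paper leaves implicit: that $\det J_0^{(xt)}\equiv1$ on $\mathbb R\cap\{|k|>\epsilon\}$, via $dd^*-(aB-bA)(a^*B^*-b^*A^*)=(aa^*-bb^*)(AA^*-BB^*)=1$, and that $c_{-\bar\mu_j}=-\overline{c_{\mu_j}}$.
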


\begin{remark}
The situation concerning the solvability of the RH problem
\eqref{jump_hatM_(xt)_geq}--\eqref{res-hatM_(xt)_2_geq}
is considerably more involved than in the case $u(0,t)<0$.
Indeed, the additional jump on the circle $|k|=\epsilon$ is determined by
the boundary spectral functions through the quotients
\[
    \frac{B^*(k)}{a(k)d(k)}, \qquad
    \frac{B(k)}{a^*(k)d^*(k)}.
\]
Consequently, the circular jump does not disappear even in the case
$\kappa_2^0=0$, and the RH problem does not reduce to a standard real-line
formulation. Therefore, the conventional vanishing lemma is not directly applicable.

Nevertheless, if the reflection data together with the circular jump are
sufficiently small in suitable Sobolev norms, and if $a(k)$ and $d(k)$ have no
zeros on the contour, then the associated Beals--Coifman singular integral
equation is uniquely solvable by a Neumann series argument \cites{BealsCoifman1984,L18,deift1993steepest}. In the general case,
solvability may be formulated as a Fredholm problem of index zero, with unique
solvability equivalent to the absence of nontrivial solutions of the
corresponding homogeneous RH problem.
\end{remark}

In this way, we arrive at the following representational result:

\begin{theorem}\label{Prop:rep_geq}
Let $u(x,t)$ be a solution of the SW equation on
$[0,\infty)\times[0,T]$ such that 
\begin{enumerate}[(i)]
    \item $u(\cdot,t)\in H^4(0,\infty)$ and
$u_{xx}(\cdot,t)\in H^{2,1}(0,\infty)$ for all
 $t\in[0,T]$;
 \item $1-u_{xx}(x,0)>0$ for $x\geq 0$;
 \item $u(0,t)\geq0$ and $1-u_{xx}(0,t)>0$ for all $0\leq t\leq T$.
\end{enumerate}
 Then $ u(x,t)$ can be represented in terms of a unique solution of the RH problem \eqref{jump_hatM_(xt)_geq}--\eqref{res-hatM_(xt)_2_geq}, for which the data (jump matrix and residue condition) are given in terms of the
initial value $ u(x,0)$ and boundary values $ u(0,t)$, $ u_x(0,t)$, and $ u_{xx}(0,t)$ via the associated spectral functions.

\end{theorem}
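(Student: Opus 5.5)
The proof will mirror that of Theorem \ref{Prop:rep}, the only change being that the column $\Phi_{\infty 2}^{(1)}/a$ is replaced by $\Phi_{\infty 1}^{(1)}/d$ with $d=aA^*-bB^*$. First I check that in the regime $u(0,t)\ge 0$ the function $M^{(xt)}$ in \eqref{M_(xt)_geq} is well defined and has properties (1)--(6). Writing $\Phi_{\infty1}=\Phi_{\infty2}\eul^{-p\sigma_3}S\eul^{p\sigma_3}$ and $\Phi_{\infty3}=\Phi_{\infty2}\eul^{-p\sigma_3}s\eul^{p\sigma_3}$ from \eqref{rel_inf}, I express $\Phi_{\infty1}=\Phi_{\infty3}\eul^{-p\sigma_3}s^{-1}S\eul^{p\sigma_3}$. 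The $(11)$ entry of $s^{-1}S$ is $d(k)$. Hence $\Phi_{\infty1}^{(1)}/d$ is analytic in $\mathbb C^+\cap\{|k|>\epsilon\}$ outside the zeros $\mu_j$ of $d$, by the $u(0,t)\ge0$ analyticity list, and it tends to $(1,0)^T$ as $k\to\infty$. The jumps on $\mathbb R\cap\{|k|>\epsilon\}$ are then read off from these relations, producing the entries $(aB-bA)/d^*$ and $1/(dd^*)$ in \eqref{jump_M_(xt)_matr_0_geq}.

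The jumps inside the disk coincide with the $u(0,t)\le0$ case, since $\Phi_{0j}$ do not depend on the sign of $u(0,t)$. The jumps on $|k|=\epsilon$ come from combining \eqref{Phi_0_inf_rel} for $j=1,2,3$ with \eqref{s_via_til_s}. Residues at the $\mu_j$ follow from the column relation at zeros of $d$, giving \eqref{res-M_(xt)_geq}. The expansions \eqref{inf_M_(xt)_geq} and \eqref{i_beh-M_(xt)_geq} follow from Neumann series with integration by parts, exactly as before, because near $0$ only $\Psi_{02},\Psi_{03}$ enter. Symmetries and $\det\equiv1$ are inherited from \eqref{sym-Phi}.

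Next I pass to the variable $y$ via \eqref{y__geq}. Then $p(x,t,k)=\hat p(y(x,t),t,k)$, so $\hat M^{(xt)}(y(x,t),t,k):=M^{(xt)}(x,t,k)$ solves RH problem \eqref{jump_hatM_(xt)_geq}--\eqref{res-hatM_(xt)_2_geq}. Its data are built from $a,b$ (from $u(x,0)$), from $A,B$ (from $v_0,v_1,v_2$ via Section \ref{sec:3}), and from $\tilde a,\tilde b,\kappa_j^0,\nu(0)$ (recovered from $a,b$ by Remark \ref{rem:data_ab}). Uniqueness comes from the preceding Proposition, that is, Liouville applied to $\hat M_1\hat M_2^{-1}$. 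The reconstruction then repeats Steps 1--5 of Theorem \ref{Prop:rep}. I read off $\hat\alpha,\hat\beta,\hat f_1,\hat g_2$ at $k=0$ and set $\hat u=\hat g_2$, $\hat m=(\hat\alpha+\hat\beta)^4-1$. I define $\eta(t)$ by \eqref{eta_via_RH_xt}, define $x(y,t)$ by \eqref{y_via_RH_xt}, and invert to get $u(x,t)=\hat u(y(x,t),t)$.

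The main obstacle I expect is justifying the analyticity of $\Phi_{\infty1}^{(1)}(x,t,k)$ in $\mathbb C^+$ for all $x>0$, not only at $x=0$. The $t$-integral in \eqref{inteq_inf1} runs from $T$ down to $t$ with exponent involving $\eta$, and for $u(0,t)\ge0$ this is bounded in the appropriate half-plane. I must also confirm that the $1/(2k)$ term, bounded near $k=0$ only in the opposite half-plane, is harmless because we stay in $|k|>\epsilon$. A further issue is that the small-$k$ asymptotics are consistent across the circle, i.e. that $\Phi_{\infty1}$ and $\Psi_{02}$ are compatible on $|k|=\epsilon$ through $C_1,C_2$. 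I would handle this by expressing the circle jump through $S$, using \eqref{S_via_til_S}, and checking that the expression reduces to the stated one involving $B^*/(ad)$.
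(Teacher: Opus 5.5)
Your proposal is correct and follows essentially the paper's route: verify properties (1)--(6) of $M^{(xt)}$ with $\Phi_{\infty1}^{(1)}/d$ in place of $\Phi_{\infty2}^{(1)}/a$, pass to $y$ via \eqref{y__geq} so that $p=\hat p$, invoke uniqueness, and reconstruct $u$ from the $k=0$ expansion through $\hat u=\hat g_2$, $\hat m=(\hat\alpha+\hat\beta)^4-1$ and the inverse change of variables. The only minor difference is that in this regime the paper fixes $\eta(t)=-\int_0^t v_0\sqrt{1-v_2}\,\dd\tau$ directly from the prescribed boundary data rather than through the ODE \eqref{eta_via_RH_xt}; this is simpler and sidesteps the question of uniqueness for that ODE.
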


\begin{proof}
  The solution of the RH problem \eqref{jump_hatM_(xt)_geq}--\eqref{res-hatM_(xt)_2_geq} is unique. Together with  \eqref{inf_M_(xt)_geq} and \eqref{i_beh-M_(xt)_geq}, this yields  the following procedure for representing $ u(x,t)$ in terms of $\hat M^{(xt)}(y,t, k)$:

  \begin{enumerate}[Step 1.]
    \item Given $u(x,0)$ construct $a(k)$, $b(k)$, $\tilde a(k)$, and $\tilde b(k)$; given $u(0,t)$, $u_x(0,t)$, and $u_{xx}(0,t)$, construct $A(k)$, $B(k)$, $\tilde A(k)$, and $\tilde B(k)$  (see Section \ref{sec:3}) and construct the RH problem \eqref{jump_hatM_(xt)_geq}--\eqref{res-hatM_(xt)_2_geq};

    \item Solve the constructed RH problem \eqref{jump_hatM_(xt)_geq}--\eqref{res-hatM_(xt)_2_geq};

    \item Evaluate the solution of this RH problem at $ k=0$:
       \begin{equation*}\label{M(xt)_at_0_geq}
       \begin{aligned}
       \hat M^{(xt)}(y,t, k)=&
            \begin{pmatrix}
           \hat \alpha(y,t)&\hat \beta(y,t)\\
           \hat \beta(y,t)&\hat \alpha(y,t)
       \end{pmatrix}\left( I+\ii k\begin{pmatrix}
           \hat f_1(y,t)&\hat f_1(y,t)\\-\hat f_1(y,t)&-\hat f_1(y,t)
       \end{pmatrix}\right.+
       \\
      &\left.+k^2\begin{pmatrix}
           \hat g_1(y,t)&\hat g_2(y,t)\\ \hat g_2(y,t)&-\hat g_1(y,t)
       \end{pmatrix}+o(k^2)\right)\eul^{\ii k \hat\rho(y,t)\sigma_3},
       \end{aligned}       
    \end{equation*}
and at $ k=\infty$:
    \begin{equation*}\label{hatM_xt__inf_geq}
        M^{(xt)}(y,t, k)=I-\frac{\ii}{2 k}\begin{pmatrix}
    *&\hat \eta(y,t)\\
    -\hat \eta(y,t)&*
\end{pmatrix}+o\left(\frac{1}{ k}\right)
    \end{equation*}

    \item Define $\hat u(y,t)$ and $\hat m(y,t)$ from these expansions in the following way (cf. \eqref{inf_M_(xt)_geq} and \eqref{i_beh-M_(xt)_geq}):
    \begin{subequations}
    \begin{align}\label{hat_u_via_RH_xt_geq}
     &\hat{ u}(y,t)=\hat g_2(y,t), \\\label{hat_u_x_via_RH_xt_geq}
    & \hat{ v}(y,t)= 2\hat f_1(y,t),\\\label{hat_m_via_RH_xt_geq}
   & \hat{ m}(y,t)=(\hat \alpha(y,t)+\hat \beta(y,t))^4-1.
    \end{align}
     \end{subequations}

Define $x(y,t)$ as the solution of the problem (cf. \eqref{y__geq})
 \begin{subequations}
    \begin{align}\label{y_via_RH_xt_geq}
     &\frac{\partial x}{\partial y}=\frac{1}{(\hat \alpha(y,t)+\hat \beta(y,t))^2},\\
     &x(\eta(t),t)=0
    \end{align}
      \end{subequations}
where $\eta(t)=-\int_0^t \left(u \sqrt{ m+1}\right)(0,\tau)\dd \tau$.
Then
\begin{subequations}\label{v_j_via_RH_xt_geq}
\begin{align}
   & u(x,t)=
     \hat{ u}(y(x,t),t),\\
     & m(x,t)=
     \hat{m}(y(x,t),t).
    \end{align}
    \end{subequations}
    
\end{enumerate}
  
\end{proof}

\subsubsection{Initial Boundary Value Problem}

To fix ideas, we assume that the spectral functions have no common zeros.

\begin{theorem}\label{prop:ex}
    Let functions  $\{ u_0(x),~x\geq0; ~\{v_j(t)\}_0^2,~0\leq t\leq T<\infty\}$ be 
    such that the following conditions are satisfied:

    \begin{enumerate}

    \item Regularity conditions
    \begin{enumerate}[(i)]
    \item $u_{0xx}\in H^{2,1}(0,\infty)$;
    \item $v_j\in H^{2,0}(0,T)$, $ j=0,1,2$.
    \end{enumerate}
        \item Internal compatibility  conditions

        \begin{enumerate}[(i)]
    \item $\partial_x^j  u_0(0)=v_j(0)$, $j=0,1,2$;
    \item if $v_0(t)=0$ on $(T_1,T_2)$, then $v_{2t}(t)-2v_1(t)(1-v_2(t))=0$ on $(T_1,T_2)$;
    \item $
v_0(t)\left(1-\frac{v_2(t)}{2}\right)
-\frac{v_1^2(t)}{4}
=\frac{v_{1t}(t)}{2}$ for $0\leq t\leq T$.
\end{enumerate}

\item Sign conditions  
\begin{enumerate}[(i)]
\item $1-u_{0xx}(x)>0$ for $x\ge0$;
\item $1-v_2(t)>0$ for $0\le t\le T$;
    \item $v_0(t)\geq0$.
\end{enumerate}

\item Compatibility of boundary and initial data:
 the spectral functions 
$\tilde a(k)$, $\tilde b(k)$, $\tilde A(k)$, and $\tilde B(k)$
associated to the data via the direct $x$-problem and $t$-problem
satisfy the global relation \eqref{relations_Phi_03_i}.

\item The RH problem \eqref{jump_hatM_(xt)_geq}--\eqref{res-hatM_(xt)_2_geq} has a solution for all $y\geq \eta(t)$ and $0\leq t<T$ such that
\begin{enumerate}[(a)]
\item it satisfies the differentiability
asymptotic conditions \eqref{regularity_M_infty} and
\eqref{regularity_M_zero}.
\item 
functions 
$\hat g_2(y,t)$, $\hat f_1(y,t)$, $\hat \alpha (y,t)$, and $\hat \beta (y,t)$ 
evaluated from $\hat M^{(xt)}(y,t,k)$, see \eqref{i_beh-M_(xt)_geq} and \eqref{inf_M_(xt)_geq},
are differentiable w.r.t. $y$ and $t$;

\item functions $\hat \eta (\cdot,t)$, $(\hat \alpha(\cdot,t)+\hat{\beta}(\cdot,t))^4-1$, $\hat f_2 (\cdot,t)$, and $\hat g_2(\cdot,t)-\hat f_2 (\cdot,t)\hat f_1 (\cdot,t)+\hat f_2 (\cdot,t)^2$ evaluated from $\hat M^{(xt)}(y,t,k)$, see \eqref{i_beh-M_(xt)_geq} and \eqref{inf_M_(xt)_geq},  are in $H^{1,1}(0,\infty)$.
 
\end{enumerate}

    \end{enumerate}

Then the IBVP 
\begin{subequations}\label{SP_IBVP_}
\begin{align}\label{SP-2_IBVP_}
&\left(\sqrt{m+1}\right)_t=-\left(u\sqrt{m+1}\right)_x, \qquad m=-u_{xx};\\
     \label{ic_IBVP_}
     &u(x,0) =  u_0(x), \quad x \geq 0;\\
\label{boundary_IBVP_}
    & u(0,t) = v_0(t), \quad  u_x(0,t) = v_1(t), \quad  u_{xx}(0,t) = v_2(t), \quad
     0\leq t < T\leq\infty;\\
     \label{as_IBVP_}
     &  u(\cdot, t)\in  H^{4}(0,\infty) \text{ and } u_{xx}(\cdot, t)\in  H^{2,1}(0,\infty) \text{ for all } t\in[0,T],
\end{align}
\end{subequations}
has a unique solution, $ u(x,t)$, which can be represented in terms of the solution of the associated RH problem \eqref{jump_hatM_(xt)_geq}--\eqref{res-hatM_(xt)_2_geq} in the parametric form \eqref{hat_u_via_RH_xt_geq}, \eqref{y_via_RH_xt_geq}, \eqref{v_j_via_RH_xt_geq}. 

\end{theorem}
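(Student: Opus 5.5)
The argument follows the standard scheme of the unified transform method for existence: build a candidate from the RH solution, check that it solves the equation, check the initial and boundary values, and get uniqueness from the representation result. Let $\hat M^{(xt)}(y,t,k)$ be the solution provided by assumption 5.

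\emph{Symmetries.} The first step is to check that the jump matrix \eqref{jump_M_(xt)_matr_0_geq} and the residue constants \eqref{res-hatM_(xt)_2_geq} satisfy the hypotheses of Proposition~\ref{prop:sym}. This uses the symmetries of $a,b,A,B,\tilde a,\tilde b$ established in Section~\ref{sec:3} and the fact that $d(k)=aA^*-bB^*$ inherits them. Proposition~\ref{prop:sym} then gives \eqref{sym_1}--\eqref{sym_2}.

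\emph{Candidate solution in $(y,t)$.} Together with 5(a), the symmetries allow us to apply Propositions~\ref{Prop_Lax_y}, \ref{Prop_Lax_t} and the theorem following \eqref{rel_12}. Hence $\hat m$, $\hat v$, $\hat u$ defined by \eqref{hmbbgg} solve \eqref{SW_in_y}, with $\hat m+1=(\hat\alpha+\hat\beta)^4>0$.

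\emph{Defining $\eta(t)$.} This is the first delicate point. I would not take $\eta$ from the data. Instead I would define it intrinsically as the solution of $\eta'=-\hat u(\eta,t)(\hat\alpha+\hat\beta)^2(\eta,t)$ with $\eta(0)=0$, as in \eqref{eta_via_RH_xt}. Local solvability comes from the differentiability in 5(b), and global solvability on $[0,T)$ comes from the $H^{1,1}$ bounds in 5(c). Then set $x(y,t)$ by \eqref{x_y_}. The conservation law \eqref{cons_y} gives $x_t=\hat u$, so Proposition~\ref{prop:hatu_u} yields a solution $u(x,t)$ of \eqref{SP-2_IBVP_}. Its regularity \eqref{as_IBVP_} follows from 5(c), since $x\mapsto y$ is a smooth diffeomorphism with Jacobian bounded above and below.

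\emph{Initial condition.} At $t=0$, the RH problem coincides with RH$^{(x)}$ after a transformation that removes the boundary contribution. On $|k|>\epsilon$ in $\mathbb C^+$, multiply the first column by $d/a$ and use $\Phi_{\infty1}=\Phi_{\infty2}S$ at $t=0$, i.e.\ the ratio $A^*-bB^*/a$. The extra factor $B^*/(ad)$ is analytic in $\mathbb{C}^+\cap\{|k|>\epsilon\}$ when $v_0\geq 0$, because of the large-$k$ behaviour of $A,B$ in $\mathbb C^-$ combined with the symmetry $k\mapsto\bar k$. It therefore cancels exactly the circular term $-B^*/(ad)$ in \eqref{jump_M_(xt)_matr_0_geq}. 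The zeros of $d$ are mapped to the zeros of $a$ because the spectral functions have no common zeros.

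After this transformation the problem becomes RH$^{(x)}$ for $u_0$. By uniqueness and the procedure of the inverse $x$-mapping, $\hat m(\cdot,0)$ recovers $m_0$ and $\hat u(\cdot,0)=u_0$.

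\emph{Boundary condition.} This is the main obstacle, and the global relation (assumption 4) is used here. At $y=\eta(t)$, i.e.\ $x=0$, I would transform $\hat M^{(xt)}$ into a solution of RH$^{(t)}$ for the case $u(0,t)\ge0$. Outside the disk, this is done by right-multiplying with the triangular matrices built from $s$ and $S$, in the form $\Phi_{\infty 3}=\Phi_{\infty1}S^{-1}s$. Inside the disk, the same is done using $\tilde s,\tilde S$.

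The jump of the transformed function on $|k|<\epsilon$ contains the combination $(\tilde A\tilde b-\tilde B\tilde a)\eul^{\ii T/k}$. Relation \eqref{relations_Phi_03_i} guarantees that this term is $O(k)$, so the transformed function has the normalisation required at $k=0$. On the outer part, the corresponding combination $(Ab-Ba)\eul^{2\ii k\eta(T)}$ is harmless, as noted after \eqref{relations_Phi_inf3} for the case $v_0\ge0$.

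Uniqueness for RH$^{(t)}$ then identifies the transformed function with $M^{(t)}$. The reconstruction \eqref{hat_v_j_via_RH_t_geq} gives $u(0,t)=v_0$, $u_x(0,t)=v_1$, $u_{xx}(0,t)=v_2$. The compatibility conditions 2(ii)--(iii) are needed here so that the expansion \eqref{i_beh-M_(t)_geq} matches the $k\to0$ coefficients. In the same step, I would verify that the intrinsically defined $\eta$ coincides with $-\int_0^t v_0\sqrt{1-v_2}$.

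\emph{Uniqueness.} Any other solution satisfying \eqref{as_IBVP_} has, by Theorem~\ref{Prop:rep_geq}, the same spectral data and hence the same RH problem. Uniqueness of that RH solution gives uniqueness of the solution of the IBVP.
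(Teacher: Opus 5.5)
Your architecture matches the paper's proof: SW in $(y,t)$ via Section~\ref{sec:SWinyt}; at $t=0$ a lower-triangular right factor that adds $-\frac{B^*}{ad}\eul^{2\hat p}$ times the second column to the first (not a rescaling of the first column) to reach RH$^{(x)}$; at $x=0$ a triangular factor built from $s,S,\tilde s,\tilde S$ to reach RH$^{(t)}$; uniqueness via Theorem~\ref{Prop:rep_geq}. But the boundary step as you describe it does not go through.

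\textbf{The $\eta$ step is circular.} After the transformation at $y=z$, the outer $(12)$ entry in $\mathbb C^+$ is $-b(k)\eul^{-2\ii kz}\eul^{\ii t/k}$. This is bounded as $k\to\infty$ only if $z\le0$. Moreover, RH$^{(t)}$ with parameter $z$ is known to reproduce $M^{(t)}(t,k)$, hence $v_j(t)$, only at $z=-\int_0^tv_0\sqrt{1-v_2}\,\dd\tau$. Neither fact is available for your ODE-defined $\eta$ before the boundary values are identified. What makes the outer part harmless is the sign condition $v_0\ge0$, through $\eta(t)\le0$, not the remark after \eqref{relations_Phi_inf3}.

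The repair is to compare at the data value $\eta(t)=-\int_0^tv_0\sqrt{1-v_2}\,\dd\tau$, as the paper does. This gives $\hat u(\eta(t),t)=v_0(t)$ and $\hat m(\eta(t),t)=-v_2(t)$, so this $\eta$ also solves your ODE. ODE uniqueness (the coefficients are $C^1$ by 5(a)) then shows the two coincide. In that order your intrinsic definition is a genuine improvement. It makes $x_t=\hat u$, hence Step 1, independent of the boundary identification. With the data $\eta$, the identity $x_t=\hat u$ needs $\eta'=-\hat u(\eta,t)(\hat\alpha+\hat\beta)^2(\eta,t)$, which is only known after Step 3.

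\textbf{The global relation is misplaced.} It does not act on a jump; all jumps match identically by construction. It acts on the off-diagonal entry of the factor inside the disk, $\eul^{\ii k\nu(0)}\tilde A^{-1}(\tilde B\tilde a-\tilde b\tilde A)\eul^{-2\ii k\eta(t)}\eul^{\ii t/k}$, in which $\eul^{\ii t/k}$ grows exponentially as $k\to0$ in $\mathbb C^+$. An $O(k)$ bound is not what you need there. An off-diagonal term of order $k$ or $k^2$ in this factor would contaminate exactly the coefficients $\hat f_2$ and $\hat g_2$ from which $v_1$ and $v_0$ are read off.

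What \eqref{relations_Phi_03_i} gives is that this entry is $O(k)\eul^{-\ii(T-t)/k}$, hence exponentially small in closed subsectors of $\mathbb C^+$ for $t<T$. Up to $O(k^3)$ the factor is then diagonal: $I+\ii kp_1\sigma_3$ plus a diagonal $k^2$ term, with $p_1=\nu(0)-u_{0x}(0)/2$. You still must check, as the paper does, that the induced shift $\hat f_1\mapsto\hat f_1+p_1$, $\hat g_2\mapsto\hat g_2+p_1\hat f_2$ leaves $\hat\alpha$, $\hat\beta$, $\hat f_2$ and $\hat g_2-\hat f_1\hat f_2+\hat f_2^2$ unchanged. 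Only then do you get $u(0,t)=v_0$, $u_x(0,t)=v_1$, $u_{xx}(0,t)=v_2$.
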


\begin{proof}
    The proof consists of the following steps:
\begin{enumerate}[Step 1.]
    \item Prove that $ u(x,t)$ satisfies the SW equation \eqref{SP_IBVP_}.

    \item  Prove that $u(x,0) =u_0(x)$.

    \item Prove that $ u(0,t) = v_0(t)$, $u_x(0,t) = v_1(t)$ and $ u_{xx}(0,t) = v_2(t)$.
\end{enumerate}

\textbf{Step 1.} The statement follows from the considerations in Section \ref{sec:SWinyt}.

\textbf{Step 2.} The proof of Step~2 relies on relating the solution $\hat M^{(xt)}(y,t,k)$ of the RH problem  \eqref{jump_hatM_(xt)_geq}--\eqref{res-hatM_(xt)_2_geq} at $t=0$ to the solution $\hat M^{(x)}(y,k)$ of the  RH problem \eqref{jump_hatM_(x)}--\eqref{res-hatM-_(x)}. This relation is expressed through multiplication by a suitable matrix factor:
 \begin{subequations}
     \begin{align}
         &P^{(x)}(y,k)=\eul^{-\hat p(y,0,k)\sigma_3}P_0^{(x)}(k)\eul^{\hat p(y,0,k)\sigma_3},\\
         & P_0^{(x)}(k)=\begin{cases}
        \begin{pmatrix}
           1&0\\
           -\frac{B^*(k)}{a(k)d(k)}&1
        \end{pmatrix},\quad k\in \mathbb{C}^+\cap\{|k| >\epsilon\},\\
         \begin{pmatrix}
           1&-\frac{B(k)}{a^*(k)d^*(k)}\\
           0&1
        \end{pmatrix},\quad k\in \mathbb{C}^-\cap\{|k| >\epsilon\},\\
       I, \quad \text{otherwise}.
     \end{cases}
     \end{align}
 \end{subequations}

Thus, the problem reduces to show that  $\hat{ N}^{(xt)}(y,k)$ defined by
 \begin{equation}
   \hat{ N}^{(xt)}(y,k)=\hat{ M}^{(xt)}(y,0,k)P^{(x)}(y,k)
 \end{equation}
 satisfies the RH problem \textbf{RH$^{(x)}$}.

\begin{enumerate}
    \item The jump conditions match by construction.

    \item Noticing that 
    $P_0^{(x)}(k)=I+O\left(\frac{1}{k}\right)$ as $k\to\infty$ in $\mathbb{C}^+$ (which follows from $B^*(k)=O(\frac{1}{k})$), we see that the normalization condition is satisfied.

    \item Finally, let's check that the residue conditions match.

For $k\in \mathbb{C}^+$, we have 
 \begin{align*}
     \hat{ N}^{(xt)(1)}(y,k)&=\hat{M}^{(xt)(1)}(y,0,k)-\frac{B^*(k)\eul^{2\hat p(y,0,k)}}{a(k)d(k)}\hat{M}^{(xt)(2)}(y,0,k),\\
     \hat{ N}^{(xt)(2)}(y,k)&=\hat{M}^{(xt)(2)}(y,0,k).
 \end{align*}
 Taking into account the residue condition \eqref{res-hatM+_(xt)_2_geq}, the singularities of the first column at $k=\mu_j$ cancel.

 On the other hand, at $k=k_j$, the first column is singular due to the singularity of $\frac{1}{a(k)}$, and, using $d(k_j)=-b(k_j)B^*(k_j)$, the corresponding residue condition takes the form \eqref{res-hatM+_(x)}.

 The singularities in $\mathbb{C}^-$  can be treated in a similar way.
\end{enumerate}

Thus, by uniqueness of solution of RH problem \textbf{RH$^{(x)}$}, we conclude that $\hat{ M}^{(xt)}(y,0,k)P^{(x)}(y,k)=\hat{ M}^{(x)}(y,k)$.

Since $P^{(x)}_{0}(k)=I$ in $\{|k| <\epsilon\}$, 
relations \eqref{m_0_via_RH_x_m_2} and \eqref{hat_m_via_RH_xt} imply that
\[
\hat { m}(y,0)=\hat{ m}_{0}(y).
\]
Furthermore, combining \eqref{M_(x)_x_y} and \eqref{y_via_RH_xt} yields
\[
x(y)=x(y,0),
\]
where $x(y)$ on the left-hand side is the change of the variable associated with the $x$-problem for $ m_{0}$, whereas the right-hand side is the change of the variable associated with the main RH problem at $t=0$.

\textbf{Step 3.} First, notice that by condition~\textit{(2)(iii)}, the boundary data satisfy the compatibility
condition~\eqref{inner_comp} required for the inverse $t$-spectral mapping developed
in Section~\ref{sec:5.2}.

The proof of Step~3 follows the same strategy as the proof of Step~2. We relate the solution $\hat M^{(xt)}(y,t,k)$, evaluated at $y=\eta(t)$, to the solution $\hat M^{(t)}(z,t,k)$ of the RH problem \textbf{RH$^{(t)}$} associated with the boundary data $v_0(t)$, $v_1(t)$, and $v_2(t)$. This relation is realized through multiplication by a suitable piecewise meromorphic matrix factor:
    \begin{equation}
        P^{(t)}(t,k)=\eul^{-\hat p(\eta(t),t,k)\sigma_3}P^{(t)}_0(k)\eul^{\hat p(\eta(t),t,k)\sigma_3}
    \end{equation}
with
\begin{equation}
     P^{(t)}_0(k)=\begin{cases}

\begin{pmatrix}
             \frac{d(k)}{A^*(k)}&-b(k)\\
            0&\frac{A^*(k)}{d(k)}
         \end{pmatrix},\quad k\in \mathbb{C}^+\cap\{|k| >\epsilon\},\\

\begin{pmatrix}
    \tilde a(k)\eul^{\ii k\nu(0)}
&\eul^{\ii k\nu(0)}
\frac{\tilde B(k)\tilde a(k)-\tilde b(k) \tilde A(k)}{\tilde A(k)}\\
    0&\frac{ \eul^{-\ii k\nu(0)}}{ \tilde a(k)
}
\end{pmatrix}, \quad k\in \mathbb{C}^+\cap\{|k| <\epsilon\},\\

\begin{pmatrix}
       \frac{A(k)}{d^*(k)}     &0\\
            -b^*(k)& \frac{d^*(k)}{A(k)}
         \end{pmatrix},\quad k\in \mathbb{C}^-\cap\{|k| >\epsilon\},\\

\begin{pmatrix}
\frac{\eul^{\ii k\nu(0)}
}{ \tilde a^*(k)}    &0\\
   \eul^{-\ii k\nu(0)} \frac{\tilde B^*(k)\tilde a^*(k)-\tilde b^*(k) \tilde A^*(k)}{\tilde A^*(k)} &\tilde a^*(k)\eul^{-\ii k\nu(0)}
\end{pmatrix}, \quad k\in \mathbb{C}^-\cap\{|k| <\epsilon\}.

\end{cases}
 \end{equation}

Thus the problem reduces to show that  $\hat{\hat N}^{(xt)}(t,k)$ defined by
 \begin{equation}
   \hat{\hat N}^{(xt)}(t,k)=\hat{ M}^{(xt)}(\eta(t),t,k)P^{(t)}(t,k)
 \end{equation}
 satisfies the RH problem \textbf{RH$^{(t)}$} with $z=\eta(t)$.

 \begin{enumerate}
     \item The jump conditions match by construction.

     \item The asymptotic properties of $a(k)$, $b(k)$, and $A(k)$ together with the fact that $\eul^{-2\hat p(\eta(t),t,k)}$ decays exponentially fast provide that 
     $P^{(t)}(t,k)\to I$ as $k\to\infty$ in $\mathbb{C}^+$ for all $t\leq T$. A similar argument applies in $\mathbb{C}^-$. Therefore, the normalization condition is satisfied. 

\item Let's check that the residue conditions match.

    For $k\in \mathbb{C}^+$, we have

      \begin{align*}
     \hat{\hat{ N}}^{(xt)(1)}(t,k)&=\frac{d(k)}{A^*(k)}\hat{M}^{(xt)(1)}(\eta(t),t,k),\\
     \hat{\hat{ N}}^{(xt)(2)}(t,k)&=-b(k)\eul^{-2\ii k\left(\eta(t)-\frac{t}{2 k^2}\right)}\hat{M}^{(xt)(1)}(\eta(t),t,k)+\frac{A^*(k)}{d(k)}\hat{M}^{(xt)(2)}(\eta(t),t,k).
 \end{align*}

Taking into account the residue condition \eqref{res-hatM+_(xt)_2_geq} together with the fact that $\mu_j$ is a simple zero of $d(k)$, the singularities of both columns at $k=\mu_j$ cancel.

On the other hand, at $k= \bar\nu_j$, the first column is singular due to the singularity of $\frac{1}{A^*(k)}$, and, using $d(\bar \nu_j)=-b(\bar \nu_j)B^*(\bar \nu_j)$, the corresponding residue condition takes the form \eqref{res-hatM-_(t)_geq}. 

The singularities in $\mathbb{C}^-$  can be treated in a similar way.

\end{enumerate}

Then, by uniqueness of solution of RH problem \textbf{RH$^{(t)}$}, we conclude that $\hat{\hat N}^{(xt)}(t,k)=\hat{ M}^{(t)}(\eta(t),t,k)$.

Since $v_0(t)$, $v_1(t)$ and $v_2(t)$ are expressed in terms of the first three coefficients in the expansion of $\hat{ M}^{(t)}(\eta(t),t,k)$ near $k=0$, it is important to control $P^{(t)}(t,k)$ near this point. 

Recall that \eqref{tila_at_0_} reads as
\[
\tilde a(k)=1-\frac{\ii k}{2}u_{0x}(0)+O(k^3),\quad k\to 0,
\]
and thus
\[
\frac{1}{\tilde a(k)}=1+\frac{\ii k}{2}u_{0x}(0)-\frac{k^2}{4}u_{0x}^2(0)+O(k^3),\quad k\to 0.
\]
Therefore, the global relation
\eqref{relations_Phi_03_i} implies that, for every
fixed $0\leq t<T$, the off-diagonal entry of $P^{(t)}(t,k)$ is
exponentially small as $k\to0$ in closed subsectors of $\mathbb C^+$.
Together with the expansions of $\tilde a(k)$,
$\tilde a^{-1}(k)$, and $\eul^{\pm\ii k\nu(0)}$, this gives
\begin{equation}\label{P_t_i}
\begin{aligned}
P^{(t)}(t,k)
&=
I+\ii k
\begin{pmatrix}
\nu(0)-\dfrac{u_{0x}(0)}{2}&0\\
0&-\nu(0)+\dfrac{u_{0x}(0)}{2}
\end{pmatrix}\\
&\quad
+k^2
\begin{pmatrix}
\dfrac{u_{0x}(0)\nu(0)}{2}-\dfrac{\nu^2(0)}{2}&0\\[2mm]
0&
-\dfrac{u_{0x}^2(0)}{4}
+\dfrac{u_{0x}(0)\nu(0)}{2}
-\dfrac{\nu^2(0)}{2}
\end{pmatrix}
+O(k^3),\qquad k\to 0,\quad k\in\mathbb{C}^+.
\end{aligned}
\end{equation}
Substituting $(y,t)=(\eta(t),t)$ into \eqref{hat_M_near_0} yields
\[
\hat M^{(xt)}(\eta(t),t,k)
=
\begin{pmatrix}
\hat\alpha&\hat\beta\\
\hat\beta&\hat\alpha
\end{pmatrix}
\left(
I+\ii k\begin{pmatrix}
\hat f_1&\hat f_2\\
-\hat f_2&-\hat f_1
\end{pmatrix}+k^2\begin{pmatrix}
\hat g_1&\hat g_2\\
\hat g_2&\hat g_4
\end{pmatrix}+o(k^2)
\right), \qquad k\to 0,\quad k\in\mathbb{C}^+,
\]
where all the coefficients are evaluated at $(\eta(t),t)$.

Now, set
$p_1=\nu(0)-\frac{u_{0x}(0)}{2}$.
Multiplying the preceding expansion by \eqref{P_t_i}, we obtain
\[
\begin{aligned}
\hat{\hat N}^{(xt)}(t,k)
=
\begin{pmatrix}
\hat\alpha&\hat\beta\\
\hat\beta&\hat\alpha
\end{pmatrix}
\left(
I+\ii k
\begin{pmatrix}
\hat f_1+p_1&\hat f_2\\
-\hat f_2&-\hat f_1-p_1
\end{pmatrix}
+k^2
\begin{pmatrix}
*&\hat g_2+p_1\hat f_2\\
\hat g_2+p_1\hat f_2&*
\end{pmatrix}
+o(k^2)
\right), \qquad k\to 0,\quad k\in\mathbb{C}^+.
\end{aligned}
\]

To compare this expansion with \eqref{M(t)_at_0_geq}, denote the
coefficients in the latter expansion by
$\hat\alpha^{(t)}$, $\hat\beta^{(t)}$,
$\hat f_1^{(t)}$, $\hat f_2^{(t)}$, and $\hat g^{(t)}$.
Expanding its last factor gives
\[
\hat M^{(t)}(\eta(t),t,k)
=
\begin{pmatrix}
\hat\alpha^{(t)}&\hat\beta^{(t)}\\
\hat\beta^{(t)}&\hat\alpha^{(t)}
\end{pmatrix}
\left(
I+\ii k
\begin{pmatrix}
\hat f_1^{(t)}+\rho(t)&\hat f_2^{(t)}\\
-\hat f_2^{(t)}&-\hat f_1^{(t)}-\rho(t)
\end{pmatrix}
+k^2
\begin{pmatrix}
*&\hat g^{(t)}+\rho(t)\hat f_2^{(t)}\\
\hat g^{(t)}+\rho(t)\hat f_2^{(t)}&*
\end{pmatrix}
+o(k^2)
\right).
\]

Since
$\hat{\hat N}^{(xt)}(t,k)
=
\hat M^{(t)}(\eta(t),t,k)$,
comparison of the coefficients yields
\[
\hat\alpha^{(t)}=\hat\alpha,
\qquad
\hat\beta^{(t)}=\hat\beta,
\qquad
\hat f_2^{(t)}=\hat f_2,
\]
and
\[
\hat f_1^{(t)}+\rho(t)=\hat f_1+p_1,
\qquad
\hat g^{(t)}+\rho(t)\hat f_2
=
\hat g_2+p_1\hat f_2.
\]

Consequently, by \eqref{hat_v_j_via_RH_t_geq} and \eqref{hmbbgg},
\[
\hat v_1^{(t)}
=
2\hat f_2^{(t)}
=
2\hat f_2
=
\hat v,
\]
and
\[
\hat v_2^{(t)}
=
1-\left(\hat\alpha^{(t)}+\hat\beta^{(t)}\right)^4
=
-\hat m.
\]
Moreover,
\[
\begin{aligned}
\hat v_0^{(t)}
&=
\hat g^{(t)}
-\hat f_1^{(t)}\hat f_2^{(t)}
+\left(\hat f_2^{(t)}\right)^2=
\left(\hat g_2+p_1\hat f_2-\rho(t)\hat f_2\right)
-\left(\hat f_1+p_1-\rho(t)\right)\hat f_2
+\hat f_2^2=
\hat g_2-\hat f_1\hat f_2+\hat f_2^2
=
\hat u.
\end{aligned}
\]
Therefore, we conclude that
\[
\hat u(\eta(t),t)=v_0(t),
\qquad
\hat v(\eta(t),t)=v_1(t),
\qquad
\hat m(\eta(t),t)=-v_2(t).
\]
Since $f(x,t)=\hat f(y(x,t),t)$ and $x(\eta(t),t)=0$, it follows that
\[
u(0,t)=v_0(t),
\qquad
u_x(0,t)=v_1(t),
\qquad
u_{xx}(0,t)=v_2(t).
\]

\end{proof}

\begin{remark}\label{rem:small-norm-solvability-geq}
Theorem \ref{prop:ex} has a conditional nature in the sense that it relies on  the solvability of the associated
RH problem. In turn, a sufficient condition for its solvability can be given by a small-norm argument. 

Assume first that the RH problem is pole-free and set
$\Gamma_\epsilon
=
\mathbb{R}\cup\{k\in\mathbb{C}:|k|=\epsilon\}$.
For a matrix-valued function $f$ on $\Gamma_\epsilon$, let
\[
(Cf)(k)
=
\frac{1}{2\pi\ii}
\int_{\Gamma_\epsilon}
\frac{f(s)}{s-k}\,\dd s,
\qquad k\notin\Gamma_\epsilon,
\]
be its Cauchy transform, and denote its non-tangential boundary values
by $C_\pm f$.

Set
\[
v(y,t,k)
=
\bigl(\hat J^{(xt)}(y,t,k)\bigr)^{-1},
\qquad
w(y,t,k)=v(y,t,k)-I,
\qquad
C_wf=C_-(fw).
\]
Suppose that
$w(y,t,\cdot)\in
L^2(\Gamma_\epsilon)\cap L^\infty(\Gamma_\epsilon)$
and
\[
\|C_-\|_{L^2\to L^2}
\|w(y,t,\cdot)\|_{L^\infty}<1.
\]
Then $\|C_w\|_{L^2\to L^2}<1$, so $I-C_w$ is invertible.
Consequently, the equation
\[
\mu-I=C_-(\mu w)
\]
has a unique solution $\mu\in I+L^2(\Gamma_\epsilon)$ given by
\[
\mu-I
=
(I-C_w)^{-1}C_-(w)
=
\sum_{n=0}^{\infty}C_w^nC_-(w).
\]
Moreover,
\[
\|\mu-I\|_{L^2}
\leq
\frac{
\|C_-\|_{L^2\to L^2}\|w\|_{L^2}
}{
1-\|C_-\|_{L^2\to L^2}\|w\|_{L^\infty}
}.
\]
The corresponding solution of the RH problem is
\[
\hat M^{(xt)}(y,t,k)
=
I+C(\mu w)(k).
\]
Indeed, the Plemelj relation gives
$\hat M_-^{(xt)}=\mu$ and
$\hat M_+^{(xt)}=\mu(I+w)=\hat M_-^{(xt)}v$,
which is equivalent to
$\hat M_-^{(xt)}=\hat M_+^{(xt)}\hat J^{(xt)}$.

Thus, a sufficient condition for solvability throughout the
reconstruction domain is
\[
\sup_{\substack{0\leq t<T\\y\geq\eta(t)}}
\|C_-\|_{L^2\to L^2}
\|w(y,t,\cdot)\|_{L^\infty}
<1.
\]
This condition concerns the complete jump matrix. Its verification
therefore requires estimates for the spectral functions
$a$, $b$, $\tilde a$, $\tilde b$, $A$, and $B$, as well as for
$d=aA^*-bB^*$, $\kappa_2^0$, and $\nu(0)$. In particular, the
denominators $a$, $\tilde a$, $d$, and their Schwarz conjugates must be
bounded away from zero on the corresponding parts of the contour.
For fixed $\epsilon$ and $T$, the defining Volterra equations provide
a way of verifying these bounds for sufficiently small initial and
boundary data.

If poles are present, the similar small-norm argument may be applied after replacing them by jumps on small circles.
\end{remark}

\section{Concluding Remarks}

The adaptation of the Riemann--Hilbert approach to initial boundary value problems for integrable nonlinear PDE faces, in general, the problem that the construction of the underlying RH problem  requires more data than can be specified for a well-posed  problem with initial and boundary conditions. The compatibility of boundary values that have to be prescribed for the RH problem can then be characterized in the spectral terms, using the so-called global relation.
The distinguishing feature of the problem considered in the present paper --- the IBV problem for the SW equation on the half-line --- is that the boundary is not intrinsically of inflow or outflow type: its character is determined dynamically by the unknown boundary value $u(0,t)$, which leads to two qualitatively different spectral formulations. 

Moreover, a part of the global relation, relation \eqref{relations_Phi_inf3}, need not be imposed as an additional condition in either of the two spectral formulations. Namely, in the case $u(0,t)\geq0$, it follows automatically from the analyticity and large-$k$ behavior of the spectral functions established in Section~\ref{sec:3} and therefore places no further restriction on the spectral data. 

On the other hand, in the case $u(0,t)\leq0$, relation \eqref{relations_Phi_inf3} is, in general, a genuine compatibility condition between the initial and boundary spectral data. However, the boundary spectral functions $A$ and $B$ do not enter the corresponding RH problem~\eqref{jump_hatM_(xt)}--\eqref{res-hatM_(xt)_2}, which is formulated entirely in terms of the initial spectral functions $a$ and $b$. Consequently, relation \eqref{relations_Phi_inf3} plays no role in the construction of the RH problem in this regime.

The RH framework developed here provides a spectral characterization of the IBV problem and a basis for further studies, including the analysis of admissible boundary conditions, the generalized Dirichlet-to-Neumann map, and the long-time asymptotics of solutions.


\section{Acknowledgment}
This research was partially funded by the Research Council of Norway under project 361083 (Ukraina-NASTRAN cooperation).
IK acknowledges the support from the Austrian Science Fund (FWF), grant no.
10.55776/ESP691. 
DS acknowledges the support from the National 
Research Foundation of Ukraine, grant no. 2025.07/0437.

\bibliography{shepelsky_etal}

@book{trogdon2015riemann,
author = {Trogdon, Thomas and Olver, Sheehan},
title = {{Riemann--Hilbert problems, their numerical solution, and the computation of nonlinear special functions}},
publisher = {Society for Industrial and Applied Mathematics},
year = {2015},
address = {Philadelphia, PA}
}

@book{fokas2008unified,
author = {Fokas, Athanassios S.},
title = {{A unified approach to boundary value problems}},
publisher = {Society for Industrial and Applied Mathematics},
year = {2008},
}

@article{SKPB24,
  title={{Periodic finite-band solutions 
to the focusing nonlinear Schr\"{o}dinger equation by the Fokas method: inverse and direct problems}},
  author={Shepelsky, D. and Karpenko, I. and Bogdanov, S.
          and Prilepsky, J.},
  journal={Proceedings of the Royal Society A},
  volume={480},
  pages={20230828},
  year={2024},
  publisher={The Royal Society Publishing}
}

@article{BSL17,
 author = {A.~{Boutet de Monvel} and D. Shepelsky  and L.  Zielinski},
 title = {{The short pulse equation by a Riemann–Hilbert approach}},
 journal = {Letters in Mathematical Physics},
 year = {2017},
 volume = {107},
 pages = {1345--1373},
}

@article{BKS20,
    author = {A.~{Boutet de Monvel} and I. Karpenko and D. Shepelsky},  
    title = {{A Riemann--Hilbert approach to the modified Camassa--Holm equation
   with nonzero boundary conditions}},
    journal = {J. Math. Phys.},
    year = 2020,
    volume = {61, 3},
    pages = {031504}
}

@article{CE98,
  title={Well-posedness, global existence,
and blowup phenomena
for a periodic quasi-linear hyperbolic equation},
  author={Constantin, A. and  Escher, J.},
  journal={Communications on Pure and Applied Mathematics},
  volume={LI},
  pages={0475--0504},
  year={1998},
  publisher={John Wiley \& Sons, Inc}
}

@article{deift1993steepest,
  title={A steepest descent method for oscillatory {R}iemann--{H}ilbert problems. {A}symptotics for the {MKdV} equation},
  author={Deift, P. and Zhou, X.},
  journal={Annals of Mathematic},
  volume={137},
  pages={295--368},
  year={1993},
  }

@ARTICLE{BFS06,
  author={Boutet de Monvel, Anne and Fokas, Athanassios and Shepelsky, Dmitry},
  journal={Commun. Math. Phys.}, 
  title={Integrable nonlinear evolution equations on a finite interval}, 
  year={2006},
  volume={263},
  pages={133-172}
  }

@ARTICLE{FIS,
  author={Fokas, A. and Its, A. R. and {L-Y Sung} },
  journal={Nonlinearity}, 
  title={{The nonlinear Schr\"odinger equation on the half-line}}, 
  year={2005},
  volume={18},
  number={4},
  pages={1771}
  }

@ARTICLE{BS08,
  author={Boutet de Monvel, A. and Shepelsky, D.},
  title={{The Camassa--Holm Equation on the Half-Line: a Riemann--Hilbert Approach}},
  journal={J. Geom. Anal.},
  volume={18},
  year={2008},
  pages={285--323}
}

@ARTICLE{F02,
  author={{Fokas}, A. S.},
  title={{Integrable nonlinear evolution equations on the half-line}},
  journal={Commun. Math. Phys.},
  volume={230},
  number={1},
  year={2002},
  pages={1--39}
}

@article{LF2009,
  author  = {Lenells, Jonatan and Fokas, Athanassios S.},
  title   = {{An integrable generalization of the nonlinear Schrödinger equation on the half-line and solitons}},
  journal = {Inverse Problems},
  volume  = {25},
  number  = {11},
  pages   = {115006},
  year    = {2009},
  doi     = {10.1088/0266-5611/25/11/115006},
  publisher = {IOP Publishing}
}

@article{ZH89,
  author  = {X. Zhou},
  title   = {{The Riemann--Hilbert problem and inverse scattering}},
  journal = {SIAM Journal on Mathematical Analysis},
  volume  = {20},
  pages   = {966--986},
  year    = {1989}
}

@article{L18,
  author  = {Jonatan Lenells},
  title   = {{Matrix Riemann--Hilbert problems with jumps across Carleson contours}},
  journal = {Monatshefte für Mathematik},
  volume  = {186},
  number  = {1},
  pages   = {111--152},
  year    = {2018}
}

@article{L12,
  author  = {Lenells, Jonatan},
  title   = {{Nonlinear Fourier Transforms and the mKdV Equation in the Quarter Plane}},
  journal = {Studies in Applied Mathematics},
  volume  = {129},
  number  = {4},
  pages   = {347--371},
  year    = {2012},
  doi     = {10.1111/j.1467-9590.2012.00560.x}
}

@article{Alber1999,
  author  = {Alber, M. S. and Camassa, R. and Fedorov, Y. N. and Holm, D. D. and Marsden, J. E.},
  title   = {On billiard solutions of nonlinear {PDEs}},
  journal = {Physics Letters A},
  volume  = {264},
  pages   = {171--178},
  year    = {1999}
}

@article{Alber1995,
  author  = {Alber, M. S. and Camassa, R. and Holm, D. D. and Marsden, J. E.},
  title   = {On the link between umbilic geodesics and soliton solutions of nonlinear {PDEs}},
  journal = {Proceedings of the Royal Society A},
  volume  = {450},
  pages   = {677--692},
  year    = {1995}
}

@article{Borzi2005,
  author  = {Borzi, C. H. and Kraenkel, R. A. and Manna, M. A. and Pereira, A.},
  title   = {Nonlinear dynamics of short traveling capillary-gravity waves},
  journal = {Physical Review E},
  volume  = {71},
  pages   = {026307},
  year    = {2005}
}

@article{BoutetShepelsky2009,
  author  = {Boutet de Monvel, A. and Shepelsky, D.},
  title   = {Long time asymptotics of the {Camassa--Holm} equation on the half-line},
  journal = {Annales de l'Institut Fourier},
  volume  = {59},
  pages   = {3015--3056},
  year    = {2009}
}

@article{FaquirMannaNeveu2007,
  author  = {Faquir, M. and Manna, M. A. and Neveu, A.},
  title   = {An integrable equation governing short waves in a long-wave model},
  journal = {Proceedings of the Royal Society A},
  volume  = {463},
  pages   = {1939--1954},
  year    = {2007}
}

@article{HunterSaxton1991,
  author  = {Hunter, J. K. and Saxton, R.},
  title   = {Dynamics of director fields},
  journal = {SIAM Journal on Applied Mathematics},
  volume  = {51},
  pages   = {1498--1521},
  year    = {1991}
}

@incollection{Kruskal1975,
  author    = {Kruskal, M.},
  title     = {Nonlinear wave equations},
  booktitle = {Dynamical Systems, Theory and Applications},
  series    = {Lecture Notes in Physics},
  volume    = {38},
  publisher = {Springer},
  address   = {Berlin},
  pages     = {310--354},
  year      = {1975}
}

@article{Lenells2008,
  author  = {Lenells, J.},
  title   = {{Poisson structure of a modified Hunter--Saxton equation}},
  journal = {Journal of Physics A: Mathematical and Theoretical},
  volume  = {41},
  pages   = {285207},
  year    = {2008}
}

@article{BoutetDeMonvelShepelskyZielinski2011,
  author  = {Boutet de Monvel, Anne and Shepelsky, Dmitry and Zielinski, Lech},
  title   = {The short-wave model for the {Camassa--Holm} equation: a {Riemann--Hilbert} approach},
  journal = {Inverse Problems},
  volume  = {27},
  number  = {10},
  pages   = {105006},
  year    = {2011},
  doi     = {10.1088/0266-5611/27/10/105006}
}

@article{KarpenkoShepelsky2026,
  author  = {Karpenko, Iryna and Shepelsky, Dmitry},
  title   = {{The Modified Camassa--Holm Equation on the Half Line: A Riemann--Hilbert Approach}},
  journal = {Studies in Applied Mathematics},
  volume  = {156},
  number  = {6},
  pages   = {e70254},
  year    = {2026},
  doi     = {10.1111/sapm.70254}
}

@article{Karpenko2026SineGordon,
  author        = {Karpenko, Iryna},
  title         = {The sine--Gordon equation in light-cone coordinates on the half-lines revisited: a Riemann--Hilbert approach},
  journal = {Letters in Mathematical Physics},
  year          = {2026},
  volume  = {116},
  number  = {116},
  doi           = {10.1007/s11005-026-02147-8}
}

@article{BealsCoifman1984,
  author  = {Beals, Richard and Coifman, Ronald R.},
  title   = {Scattering and inverse scattering for first order systems},
  journal = {Communications on Pure and Applied Mathematics},
  volume  = {37},
  number  = {1},
  pages   = {39--90},
  year    = {1984},
  doi     = {10.1002/cpa.3160370105}
}

\end{document}